\newcommand{\Date}[1]{\def\@Date{#1}}
\def\today{\number\day~\ifcase\month\or
	January\or February\or March\or April\or May\or June\or
	July\or August\or September\or October\or November\or December\fi~\number\year}
\theoremstyle{theorem}
\newtheorem{lemma}{Lemma}
\newtheorem{proposition}{Proposition}
\newtheorem{corollary}{Corollary}
\newtheorem{assumption}{Assumption}
\newtheorem{definition}{Definition}
\newtheorem{theorem}{Theorem}
\theoremstyle{definition}
\newtheorem{example}{Example}
\newtheorem{remark}{Remark}
\def\Bka{{\it Biometrika}}
\def\VAR{\textsc{var}}
\def\VARX{\textsc{varx}}
\def\BMSB{\textsc{bmsb}}
\def\DGP{\textsc{DGP}}
\def\T{{ \mathrm{\scriptscriptstyle T} }}
\DeclareMathOperator*{\argmin}{arg\,min}
\DeclareMathOperator*{\diag}{diag}
\DeclareMathOperator*{\var}{var}
\DeclareMathOperator*{\cond}{cond}
\DeclareMathOperator*{\smb}{sb}
\DeclareMathOperator*{\kl}{KL}
\DeclareMathOperator*{\vect}{vec}
\DeclareMathOperator*{\tr}{tr}
\DeclareMathOperator*{\rank}{rank}
\def\P{{\mathrm{pr}}}
\numberwithin{equation}{section}
\begin{document}

\title{\vspace{-15mm} \bf Finite Time Analysis of Vector Autoregressive Models under Linear Restrictions}

\author{Yao Zheng$^{\dag}$ \qquad  Guang Cheng$^\ddag$\\
	$^\dag$Department of Statistics, University of Connecticut,\\
	Storrs, CT 06269, USA\\
	$^{\ddag}$Department of Statistics, Purdue University, \\
	West Lafayette, IN 47906, USA\\
	yao.zheng@uconn.edu \ \ chengg@purdue.edu 
}

\maketitle

\begin{abstract}
This paper develops a unified finite-time theory for the ordinary least squares estimation of possibly unstable and even slightly explosive vector autoregressive models under linear restrictions, with the applicable region  $\rho(A)\leq 1+c/n$, where $\rho(A)$ is the spectral radius of the transition matrix $A$ in the \VAR(1) representation, $n$ is the time horizon and $c>0$ is a universal constant. The linear restriction framework encompasses various existing models such as banded/network vector autoregressive models. We show that the restrictions reduce the error bounds via not only the reduced dimensionality but also  a scale factor resembling the asymptotic covariance matrix of the estimator in the fixed-dimensional setup: as long as the model is correctly specified, this scale factor is decreasing in the number of restrictions. It is revealed that the phase transition from slow to fast error rate regimes is determined by the smallest singular value of $A$, a measure of the least excitable mode of the system. The minimax lower bounds are derived across different regimes. The developed non-asymptotic theory not only bridges the theoretical gap between stable and unstable regimes but precisely characterizes the effect of restrictions and its interplay with model parameters. Simulations support our theoretical results. 
\end{abstract}
 
\textit{Keywords}: Consistency; Empirical process theory; Least squares; Non-asymptotic analysis; Stochastic regression; Unstable process; Vector autoregressive model.

\section{Introduction\label{sec:intro}} 

The vector autoregressive model \citep{Sims1980} is arguably the most fundamental model for multivariate time series \citep{Lutkepohl2005, Tsay2013}.
Applications of the model and its variants can be found in almost any field that involves learning the temporal dependency: economics and finance \citep{Wu_Xia2016}, energy forecasting \citep{Dowell_Pinson2016}, psychopathology \citep{Bringmann2013}, neuroscience \citep{Gorrostieta2012} and reinforcement learning \citep{Recht2018}, etc.

Consider the vector autoregressive model of order one, \VAR(1), in the following form:
\begin{equation*}
X_{t+1}= A X_t + \eta_t,
\end{equation*}
where $X_t\in\mathbb{R}^d$ is the observed time series, $A\in\mathbb{R}^{d\times d}$ is the unknown transition matrix, and $\eta_t\in\mathbb{R}^d$ is the innovation. In modern applications, the dimension $d$ is often relatively large. However, since the number of unknown parameters increases as  $d^2$, leading to problems such as over-parametrization, the  model cannot provide reliable estimates or forecasts without further restrictions \citep{Stock_Watson2001}.  A classical approach to dimensionality reduction for vector autoregressive models, which recently has enjoyed a resurgence of interest, advocates the incorporation of prior knowledge into  modelling.  For example, motivated by the fact that in spatio-temporal studies, it is often sufficient to collect information from `neighbors', \cite{Guo_Wang_Yao2016} proposed the banded vector autoregressive model, where the nonzero entries of $A$ are assumed to form a narrow band along the main diagonal, after arranging the $d$ components of $X_t$ by geographic location. To analyse users' time series over large social networks, the network vector autoregressive model of \cite{Zhu_Pan_Li_Liu_Wang2017}  used the follower-followee adjacency matrix to determine the zero-nonzero pattern of $A$, together with equality restrictions to further reduce the dimensionality. In fact, the above models can both be incorporated by the general framework of linear restrictions 
\begin{equation}\label{eq:lr}
\mathcal{C}\text{vec}(A^\T)=\mu,
\end{equation}
where $\mathcal{C}$ is a prespecified restriction matrix, $\mu$ is a known constant vector, and $A^\T$ is the transpose of $A$. This form of restrictions is traditionally well known by time series modellers; see  books on multivariate time series analysis  such as \cite{Reinsel1993}, \cite{Lutkepohl2005} and \cite{Tsay2013}. 

Meanwhile, drawing inspiration from recent developments in high-dimensional regression, another well studied approach concerns the penalized estimation, where the modeller is agnostic to  the locations of nonzero coordinates in $A$ while assuming certain sparsity \citep{Davis_Zang_Zheng2015, Han_Lu_Liu2015, Basu_Michailidis2015}, or the directions of low-dimensional projections while, e.g., assuming a low rank structure of $A$ \citep{Ahn_Reinsel1988,Negahban_Wainwright2011}. Note that in the former case, once the locations of nonzero coordinates are identified, the model can be formulated as an instance of \eqref{eq:lr}. Although we focus on fully known restrictions, the framework of \eqref{eq:lr} allows us to study the theoretical properties of a much richer variety of restriction patterns in this paper.  

On the other hand, in the literature on large vector autoregressive models, there has been an almost exclusive focus on stable processes. Technically, this means requiring that the spectral radius $\rho(A)<1$, or often more stringently, that the spectral norm $\lVert A \rVert_2<1$.  However, the analysis of stable processes typically cannot be carried over to unstable processes. In this paper, we provide a novel finite-time (non-asymptotic) analysis of the ordinary least squares estimator for stable, unstable and even slightly explosive vector autoregressive models within the general framework of \eqref{eq:lr}. 

Our analysis sheds new light on the phase transition phenomenon of the ordinary least squares estimator across different stability regimes. This is made possible by adopting the non-asymptotic, non-mixing approach of \cite{Simchowitz2018}. Resting upon a generalization of Mendelson's \citeyearpar{Mendelson2014} small-ball method, this approach is particularly attractive because: (i) it unifies stable and unstable cases, whereas in asymptotic theory these two cases would require substantially different techniques; and (ii) in contrast to existing non-asymptotic methods, it can well capture the fundamental  trait that the estimation will be more accurate as $\rho(A)\rightarrow1$. While relaxing the normality assumption in \cite{Simchowitz2018}, we precisely characterize the impact of imposing restrictions on the estimation error.  More importantly, we reveal for the first time that the phase transition from slow to fast error rates depends on the the smallest singular value of $A$, a measure of the least excitable mode of the system.  In addition, we expand the applicable region  $\rho(A)\leq 1$ in the above paper to $\rho(A)\leq 1+c/n$, where  $c>0$ is a universal constant, so  slightly explosive processes are also included.
Compared to \cite{Guo_Wang_Yao2016} which focused on the case with $\lVert A \rVert_2<1$, our assumption on the innovation distribution is stronger, and our error rate for the stable regime is larger by a logarithmic factor which, however, can be dropped under the normality assumption; see  $\S$ \ref{subsec:var_ub} for details.  Note that although \cite{Zhu_Pan_Li_Liu_Wang2017} relied on even milder assumptions on the innovations than \cite{Guo_Wang_Yao2016},  they assumed that the number of unknown parameters is fixed, and hence their theoretical analysis is incomparable to ours; see Example \ref{ex:networkAR} in $\S$ \ref{subsec:var_example}.

Throughout we denote by $\lVert\cdot\rVert$ the Euclidean norm and by $\mathcal{S}^{d-1}=\{\omega\in\mathbb{R}^d: \lVert \omega \rVert=1\}$ the unit sphere in $\mathbb{R}^d$. For a real matrix $A=(a_{ij})$, we let $\lambda_{\max}(A)$ and $\lambda_{\min}(A)$ (or $\sigma_{\max}(A)$ and  $\sigma_{\min}(A)$) be its largest and smallest eigenvalues (or singular values), respectively; additionally, we let $\rho(A)=\vert \lambda_{\max}(A) \vert, \lVert A\rVert_2=\sup_{\lVert\omega\rVert=1} \lVert A\omega\rVert$ and  $\lVert A\rVert_F=(\sum_{i,j}a_{ij}^2)^{1/2}$ be the spectral radius, spectral norm and Frobenius norm of $A$, respectively. For $x\in\mathbb{R}$, let $\lfloor x\rfloor=\max\{k\in\mathbb{Z}: k\leq x\}$ and $\lceil x\rceil=\min\{k\in\mathbb{Z}: k\geq x\}$, where $\mathbb{Z}$ is the set of integers. We write $A\succ0$ (or $A\succeq 0$) if $A$ is a positive definite (or positive semidefinite) matrix. Moreover, for any real symmetric matrices $A$ and $B$, we write $A\prec B$ (or $A \preceq B$) if $B-A\succ0$ (or $B-A\succeq0$), and write $A\nprec B$ (or $A \npreceq B$) if $A\prec B$ (or $A \preceq B$)  does not hold.  For any quantities $X$ and $Y$, we write $X \gtrsim Y$ if there exists a universal constant  $c>0$ independent of $(n,d,m,R,k,\sigma,\delta)$, whose meaning will become clear later, such that $X \geq c Y$. 

\section{Linearly restricted stochastic regression \label{sec:Reg_upperbd}}
\subsection{Problem formulation\label{subsec:formulation}}
Consider a sequence of time-dependent covariate-response pairs $\{(X_t, Y_t)\}_{t=1}^n$ following
\begin{equation}\label{eq:model_regression}
Y_t=A_*X_t+ \eta_t, 
\end{equation}
where $Y_t, \eta_t \in\mathbb{R}^q, X_t\in\mathbb{R}^d$, $A_*\in\mathbb{R}^{q\times d}$, and $E(\eta_t)=0$.  In particular, \eqref{eq:model_regression} becomes the \VAR(1) model  when $Y_t=X_{t+1}$. In model \eqref{eq:model_regression}, the process $\{X_t, t=1,2,\ldots\}$ is adapted to  the filtration \[\mathcal{F}_{t}=\sigma\{\eta_1,\ldots,\eta_{t-1},X_1,\ldots, X_{t}\}.\]

Let $\beta_*=\text{vec}(A_*^\T)\in\mathbb{R}^{N}$, where $N=q d$. The parameter space of the linearly restricted model can be defined as 
\[\mathcal{L} = \{\beta\in\mathbb{R}^N: \mathcal{C}\beta=\mu\},\]
where $\mathcal{C}$ is a known  $(N-m)\times N$ matrix of rank $N-m$, representing $N-m$ independent restrictions, and $\mu\in\mathbb{R}^{N-m}$ is a known constant vector which may simply be set to zero in practice.  Let $\mathcal{C}_{+}$ be an $m\times N$ complement of $\mathcal{C}$ such that $\mathcal{C}_{\text{full}}=(\mathcal{C}_{+}^{\T}, \mathcal{C}^\T)^\T$ is invertible, with its inverse partitioned into two blocks as $\mathcal{C}_{\text{full}}^{-1}=(R, R_{+})$, where $R$ is the  matrix of the first $m$ columns of $\mathcal{C}_{\text{full}}^{-1}$. Additionally, define $\gamma=R_{+}\mu$. Then it holds  $\mathcal{C}\gamma=\mathcal{C}R_{+}\mu=\mu$.  Note that if $\mathcal{C}\beta=\mu$, then $\beta=\mathcal{C}_{\text{full}}^{-1}\mathcal{C}_{\text{full}}\beta=R\mathcal{C}_{+}\beta+R_{+}\mathcal{C}\beta=R\theta+\gamma$, where $\theta=\mathcal{C}_{+}\beta$. Conversely, for any $\theta\in\mathbb{R}^m$, if $\beta=R\theta+\gamma$, then $\mathcal{C}\beta=\mathcal{C}R\theta+\mathcal{C}\gamma=\mu$. Thus, we have 
\[
\mathcal{L} =\{R\theta+\gamma: \theta\in\mathbb{R}^m\},
\]
i.e., the linear space spanned by columns of the restriction matrix $R$, shifted by the constant vector $\gamma$. This immediately implies that, given $(R, \gamma)$, there exists a unique unrestricted parameter $\theta_*\in\mathbb{R}^m$ such that $\beta_*=R\theta_*+\gamma$. Note that  $\gamma=0$ if and only if $\mu=0$. Moreover, the unrestricted model corresponds to the special case where $R=I_N$ and $\gamma=0$. 

The following examples illustrate how the linear restrictions can be encoded by  $(R,\gamma)$ or $(\mathcal{C}, \mu)$, where without loss of generality we set $\mu=\gamma=0$. Let $\beta_{*i}$ denote the $i$th entry of $\beta_*$. 

\begin{example}[Zero restriction]
	The restriction $\beta_{*i}=0$ may be encoded by  setting the $i$th row of $R$ to zero, or  by setting a row of $\mathcal{C}$ to $(0,\ldots, 0, 1, 0,\ldots, 0)\in\mathbb{R}^N$, where the $i$th entry is one.
\end{example}

\begin{example}[Equality restriction]
	Consider the restriction $\beta_{*i}-\beta_{*j}=0$. Suppose that the value of $\beta_{*i}=\beta_{*j}$ is $\theta_{*k}$, the $k$th entry of $\theta_*$. Then this restriction may be encoded by setting both the $i$th and $j$th rows of $R$ to $(0,\ldots, 0, 1, 0,\ldots, 0)\in\mathbb{R}^m$, where the $k$th entry is one. Alternatively, we may set a row of $\mathcal{C}$ to the $1\times N$ vector $c(i,j)$ whose $\ell$th element is defined as $[c(i,j)]_\ell=1(\ell=i)-1(\ell=j)$, where $1(\cdot)$ is the indicator function.
\end{example}

Define $n\times q$ matrices $Y=(Y_1,\ldots, Y_n)^\T$ and $E=(\eta_1,\ldots, \eta_n)^\T$, and the $n\times d$ matrix $X=(X_1,\ldots, X_n)^\T$. Then \eqref{eq:model_regression} has the matrix form $Y=XA_*^\T +E$.  Let $y=\text{vec}(Y)$,  $\eta=\text{vec}(E)$, $Z=(I_q \otimes X)R$, and $\widetilde{y}=y-(I_q \otimes X)\gamma$.
By vectorization and reparameterization, we have
\[
\widetilde{y}=(I_q \otimes X)(\beta_*-\gamma)+\eta=Z\theta_*+\eta.
\]

As a result, the ordinary least squares estimator of $\beta_*$ for the linearly restricted model is
\begin{equation}\label{eq:ols_theta}
\widehat{\beta}=R\widehat{\theta}+\gamma, \quad \widehat{\theta}=\argmin_{\theta\in\mathbb{R}^m}\lVert \widetilde{y}- Z \theta\rVert^2.
\end{equation}
Notice that $Z\in\mathbb{R}^{q n\times m}$. To ensure the feasibility of \eqref{eq:ols_theta}, we need $q n \geq m$.  Let $R=(R_1^\T, \ldots, R_q^\T)^\T$ and $\gamma=(\gamma_1^\T, \ldots, \gamma_q^\T)^\T$, where $R_i$ are $d\times m$ matrices and $\gamma_i$ are $d\times 1$ vectors. Then, $A_*=(I_q \otimes \theta_*^\T)\widetilde{R}+G$, where
\begin{equation*}
\widetilde{R}=(R_1,\ldots, R_q)^\T \in\mathbb{R}^{m q\times d}, \quad G=(\gamma_1,\ldots, \gamma_q)^\T \in\mathbb{R}^{q\times d}.
\end{equation*}
Consequently, the ordinary least squares estimator of $A$ is
$\widehat{A}=(I_q \otimes \widehat{\theta}^\T)\widetilde{R}+G$.

\subsection{General upper bounds analysis \label{subsec:sr_ub}}
To derive  upper estimation error bounds for the stochastic regression model in $\S$ \ref{subsec:formulation}, we begin by introducing a key technical ingredient, namely the block martingale small ball condition \citep{Simchowitz2018}. As a generalization of Mendelson's \citeyearpar{Mendelson2014} small-ball method to time-dependent data, this condition can be viewed as a non-asymptotic stability assumption for controlling the lower tail behavior of the Gram matrix $X^\T X$, or $Z^\T Z$ in our context. 

\begin{definition}[Block Martingale Small Ball Condition]\label{def:BMSB} (i) 	For a real-valued time series $\{X_t, t=1,2,\ldots\}$ adapted to the filtration $\{\mathcal{F}_t\}$, we say that $\{X_t\}$ satisfies the $(k,\nu,\alpha)$-\BMSB\ condition if there exist an integer $k\geq1$ and constants $\nu>0$ and $\alpha\in(0,1)$ such that, for every integer $s\geq 0$,	$k^{-1}\sum_{t=1}^{k} \P(|X_{s+t}|\geq \nu \mid \mathcal{F}_s) \geq \alpha$ with probability one. (ii) For a time series $\{X_t, t=1,2,\ldots\}$ taking values in $\mathbb{R}^d$, we say that  $\{X_t\}$ satisfies the $(k,\Gamma_{\smb},\alpha)$-\BMSB\ condition if there exists  $0\prec\Gamma_{\smb}\in\mathbb{R}^{d\times d}$ such that,  for every $\omega\in\mathcal{S}^{d-1}$, the real-valued time series $\{\omega^\T X_t, t=1,2,\ldots\}$ satisfies the $\{k,  (w^\T\Gamma_{\smb}w)^{1/2}, \alpha\}$-\BMSB\ condition.	
\end{definition}

The value of the probability $\alpha$ is unimportant for our purpose as long as it exists. The thresholding matrix $\Gamma_{\smb}$, or $\nu$ in the univariate case,   captures  the  average cumulative excitability over any size-$k$ block; e.g., if $\{X_t\}$ is a mean-zero vector autoregressive process, then $\Gamma_{\smb}$ will scale proportionally no less than $k^{-1}E(\sum_{t=1}^{k}X_{s+t}X_{s+t}^\T \mid \mathcal{F}_s)$, which is constant for all $s$. Since every time point is associated with a new shock to the  process,  $E(X_{s+t} X_{s+t}^\T \mid \mathcal{F}_s)$  will increase as $t$ increases. Consequently, $\Gamma_{\smb}$  will be monotonic increasing in $k$; see Lemma \ref{lem_bmsb} in $\S$ \ref{subsec:verify}. 	
Moreover, by aggregating all the size-$k$ blocks, $\Gamma_{\smb}$  will essentially become the lower bound of $n^{-1}\sum_{t=1}^{n}X_tX_t^\T$.  Note that for the ordinary least squares estimation, a larger lower bound on the Gram matrix will yield a sharper estimation error bound. Thus, a larger block size $k$ is generally preferred; see Theorem \ref{thm3} in $\S$ \ref{sec:VAR_upperbd} for details.

Let $\Gamma_{\smb}$ and $\overline{\Gamma}$ be  $d\times d$ positive definite matrices, and denote
\begin{equation}\label{notation:Gammaline_R}
\underline{\Gamma}_{R}=R^\T (I_q \otimes \Gamma_{\smb}) R, \quad \overline{\Gamma}_R=R^\T (I_q \otimes \overline{\Gamma}) R.
\end{equation}
In our theoretical analysis, properly rescaled matrices $\underline{\Gamma}_{R}$ and $\overline{\Gamma}_R$ will serve as lower and upper bounds of the Gram matrix $Z^\T Z$, respectively, and the covering numbers derived from them will give rise to the quantity $\log \det(\overline{\Gamma}_R\underline{\Gamma}_{R}^{-1})$ in Theorem \ref{thm1} below.
The regularity conditions  underlying our upper bound analysis are listed as follows:

\begin{assumption}\label{assum:bmsb}
	The covariates process $\{X_t\}_{t=1}^n$ satisfies the $(k,\Gamma_{\smb},\alpha)$-\BMSB\ condition.
\end{assumption}

\begin{assumption}\label{assum:upper_matrix}
	For any $\delta\in(0,1)$, there exists $\overline{\Gamma}_R$ defined as in \eqref{notation:Gammaline_R} such that  $\P(Z^\T Z \npreceq n\overline{\Gamma}_R)\leq \delta$, where $Z=(I_q \otimes X)R$, and  $\overline{\Gamma}_R$ is dependent on $\delta$.
\end{assumption}

\begin{assumption}\label{assum:error_subG}
	For every integer $t\geq 1$, $\eta_t\mid \mathcal{F}_{t}$ is mean-zero and $\sigma^2$-sub-Gaussian.  
\end{assumption}

\begin{theorem}\label{thm1} Let $\{(X_t, Y_t)\}_{t=1}^n$ be generated by the linearly restricted stochastic regression model.
	Fix $\delta\in(0,1)$. Suppose that Assumptions \ref{assum:bmsb}--\ref{assum:error_subG} hold, $0\prec\Gamma_{\smb}\preceq\overline{\Gamma}$, and 
	\begin{equation}\label{thm1_condition}
	n \geq \frac{9k}{\alpha^2} \left \{ m \log\frac{27}{\alpha} + \frac{1}{2}\log \det(\overline{\Gamma}_R\underline{\Gamma}_{R}^{-1})+\log q+\log\frac{1}{\delta}\right \}.
	\end{equation}
	Then, with probability at least $1-3\delta$, we have
	\begin{equation*}
	\lVert \widehat{\beta} - \beta_* \rVert \leq  \frac{9\sigma}{\alpha} \left [\frac{\lambda_{\max}(R \underline{\Gamma}_R^{-1} R^\T)}{n}\left \{12m \log\frac{14}{\alpha}+9\log\det(\overline{\Gamma}_R\underline{\Gamma}_{R}^{-1}) +6\log\frac{1}{\delta} \right \} \right]^{1/2}.
	\end{equation*}
\end{theorem}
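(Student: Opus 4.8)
The plan is to follow the small-ball / self-normalized martingale route of Simchowitz et al., but adapted to the reparameterized design $Z=(I_q\otimes X)R$ so that the effective dimension is $m$ rather than $N=qd$. Write $\widehat\theta-\theta_*=(Z^\T Z)^{-1}Z^\T\eta$, so that $\widehat\beta-\beta_*=R(\widehat\theta-\theta_*)$ and hence
\[
\lVert\widehat\beta-\beta_*\rVert^2 = (\widehat\theta-\theta_*)^\T R^\T R(\widehat\theta-\theta_*)\le \lambda_{\max}(R\underline\Gamma_R^{-1}R^\T)\,(\widehat\theta-\theta_*)^\T\underline\Gamma_R(\widehat\theta-\theta_*),
\]
provided we have the lower bound $Z^\T Z\succeq \tfrac{n}{?}\underline\Gamma_R$ on the relevant event. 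So the argument splits into two events: (a) a lower bound on the Gram matrix $Z^\T Z$ in terms of $n\underline\Gamma_R$, holding with probability $1-\delta$; and (b) an upper bound on the self-normalized quantity $\lVert (Z^\T Z)^{-1/2}Z^\T\eta\rVert$, holding with probability $1-2\delta$, which combined with (a) converts into a bound on $(\widehat\theta-\theta_*)^\T\underline\Gamma_R(\widehat\theta-\theta_*)$.

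For step (a), I would apply the block-martingale small-ball machinery to the scalar processes $\{v^\T Z_t\}$ for $v\in\mathcal S^{m-1}$, where $Z_t$ is the $t$-th block-row of $Z$; Assumption~\ref{assum:bmsb} (the $(k,\Gamma_{\smb},\alpha)$-BMSB condition on $\{X_t\}$) transfers to a $(k,(v^\T\underline\Gamma_R v)^{1/2},\alpha)$-type condition for $\{v^\T Z_t\}$ because $v^\T Z_t v$ involves $\omega^\T X_t$ with $\omega$ read off from $Rv$. Then the standard argument—partition the $n$ time points into $\lfloor n/k\rfloor$ blocks, apply the small-ball lower bound within each block, and take a union over a $1/4$-net of the sphere $\mathcal S^{m-1}$ (whose cardinality is at most $9^m$, giving the $m\log(27/\alpha)$ term)—yields $Z^\T Z\succeq c\,\alpha^2 n\,\underline\Gamma_R$ on an event of probability $\ge 1-\delta$, as long as $n$ exceeds the stated threshold. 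The role of $\overline\Gamma_R$ and $\log\det(\overline\Gamma_R\underline\Gamma_R^{-1})$ enters here: to make the net argument quantitative one needs an a-priori upper envelope for $Z^\T Z$ (Assumption~\ref{assum:upper_matrix}), and the covering number of the relevant ellipsoid contributes the log-determinant term; this is where the sample-size condition \eqref{thm1_condition} comes from.

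For step (b), condition on the design and use the sub-Gaussian martingale-difference structure of $\{\eta_t\}$ from Assumption~\ref{assum:error_subG}. The key tool is a self-normalized tail bound (a Freedman/Abbasi-Yadkori–style inequality): for a fixed deterministic positive-definite $V$,
\[
\P\!\left(\lVert (Z^\T Z+V)^{-1/2}Z^\T\eta\rVert \;\gtrsim\; \sigma\sqrt{\log\det\!\big((Z^\T Z+V)V^{-1}\big)+\log(1/\delta)}\,\right)\le\delta,
\]
and then choose $V$ comparable to $n\underline\Gamma_R$ (up to the $\overline\Gamma_R$ envelope) so that the log-determinant becomes $\log\det(\overline\Gamma_R\underline\Gamma_R^{-1})$ plus lower-order terms, with an extra $\log q$ absorbed from the $q$ response coordinates. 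Intersecting with the event from (a), on which $Z^\T Z\succeq c\alpha^2 n\underline\Gamma_R$, one gets
\[
(\widehat\theta-\theta_*)^\T\underline\Gamma_R(\widehat\theta-\theta_*)\;\lesssim\;\frac{\sigma^2}{\alpha^2 n}\Big\{m\log\tfrac1\alpha+\log\det(\overline\Gamma_R\underline\Gamma_R^{-1})+\log\tfrac1\delta\Big\},
\]
and feeding this into the displayed inequality at the start gives the claimed bound, with the explicit constants obtained by bookkeeping through the net cardinality ($27/\alpha$), the self-normalization slack, and the failure probabilities summing to $3\delta$.

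The main obstacle I expect is getting the two ellipsoids to line up: the self-normalized deviation inequality is cleanest with a deterministic regularizer $V$, but the lower bound from the small-ball argument is random and only holds on a good event, so one must carefully choose $V$ (e.g.\ $V=\tfrac{c\alpha^2 n}{2}\underline\Gamma_R$) so that on the good event $Z^\T Z+V\asymp Z^\T Z\asymp n\underline\Gamma_R$ while keeping $\log\det((Z^\T Z+V)V^{-1})\lesssim\log\det(\overline\Gamma_R\underline\Gamma_R^{-1})$; this requires the hypothesis $\Gamma_{\smb}\preceq\overline\Gamma$ and the upper-envelope Assumption~\ref{assum:upper_matrix}, and it is the place where the constants in \eqref{thm1_condition} and in the conclusion are pinned down. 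A secondary technical point is verifying that the BMSB condition is inherited by the projected process $\{v^\T Z_t\}$ uniformly over the sphere with the matrix $\underline\Gamma_R$ as threshold—this is a direct consequence of Definition~\ref{def:BMSB}(ii) applied to $\omega=Rv/\lVert Rv\rVert$, but the normalization has to be tracked so that the scale factor $\lambda_{\max}(R\underline\Gamma_R^{-1}R^\T)$ emerges correctly rather than a cruder bound.
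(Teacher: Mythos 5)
Your overall architecture matches the paper's proof in its two essential ingredients. First, the scale factor $\lambda_{\max}(R\,\underline{\Gamma}_R^{-1}R^\T)$ is preserved by never bounding $R$, $Z^\T Z$ and $Z^\T\eta$ separately: the paper does this through the pseudoinverse, writing $\widehat{\beta}-\beta_*=R\mathcal{V}\mathcal{D}^{-}\mathcal{U}^\T\eta$ and bounding $\lVert\widehat{\beta}-\beta_*\rVert\le\{\lambda_{\max}(R\,\underline{\Gamma}_{\min}^{-1}R^\T)\}^{1/2}\lVert\mathcal{U}^\T\eta\rVert$ on the good event, which is algebraically equivalent to your opening display (your inequality $R^\T R\preceq\lambda_{\max}(R\,\underline{\Gamma}_R^{-1}R^\T)\,\underline{\Gamma}_R$ is correct and holds deterministically). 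Second, the Gram lower bound $Z^\T Z\succeq c\,\alpha^2 n\,\underline{\Gamma}_R$ is obtained exactly as you sketch: the \BMSB\ condition plus a $1/4$-net of the ellipsoid $\mathcal{S}_{\Gamma_{\min}}$ in the norm $\lVert\Gamma_{\max}^{1/2}(\cdot)\rVert$, whose cardinality supplies both the $m\log(27/\alpha)$ and the $\tfrac12\log\det(\overline{\Gamma}_R\underline{\Gamma}_R^{-1})$ terms, with Assumption~\ref{assum:upper_matrix} providing the envelope (Lemmas~\ref{lem_A1}--\ref{lem_A2}). Where you genuinely diverge is the noise term: the paper discretizes $\mathcal{S}_{\underline{\Gamma}_{\min}}$ a second time and applies a scalar self-normalized martingale bound (Lemma~\ref{lem_A3}) to $\sum_t x_t^\T\eta_t/(\sum_t\lVert x_t\rVert^2)^{1/2}$ at each net point, whereas you propose a single Abbasi--Yadkori-type method-of-mixtures inequality with a deterministic regularizer $V\asymp\alpha^2 n\,\underline{\Gamma}_R$. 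Your route avoids the second covering argument, but it requires the vector-noise version of the mixture bound: the $q$ coordinates of $\eta_t$ arrive simultaneously, so they cannot be ordered and fed into the scalar inequality one at a time; you need Assumption~\ref{assum:error_subG} read as joint conditional sub-Gaussianity of the vector $\eta_t$, or else a per-coordinate treatment costs polynomial-in-$q$ factors in the error bound.

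Two points need correction in your bookkeeping. (i) The $\log q$ is misplaced: in the paper it arises in the Gram lower bound, from the union bound over the $q$ blocks via $\{\sum_{i=1}^q a_i<\sum_{i=1}^q b_i\}\subseteq\bigcup_{i=1}^q\{a_i<b_i\}$ applied to $a_i=\omega^\T R_i^\T X^\T X R_i\omega$, and therefore enters only the sample-size condition \eqref{thm1_condition}, not the error bound. Relatedly, your step~(a) treats $\{v^\T Z_t\}$ as a single scalar process, but for each $t$ there are $q$ rows of $Z$ sharing the same $X_t$, and the \BMSB\ condition on $\{X_t\}$ only gives small-ball control of each projection $(R_iv)^\T X_t$ separately; the sum over $i$ must be handled by the union bound just described (a naive reduction to one projection degrades the threshold by a factor of $q$ rather than $\log q$). (ii) Consequently, no $\log q$ should be ``absorbed'' in the self-normalized step: if you use the vector mixture bound none is needed, and if you instead union over response coordinates there, the extra logarithm (or worse) lands in the error bound itself, giving a strictly weaker statement than the theorem as claimed. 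With these repairs your plan delivers the stated result by essentially the paper's argument, differing only in the tool used for the noise term.
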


Note that $\lVert \widehat{\beta} - \beta_* \rVert=\lVert \widehat{A} - A_* \rVert_F$. The result in Theorem \ref{thm1} is new even for the unrestricted stochastic regression, where $R=I_N$. For  vector autoregressive processes, we will specify the matrices $\underline{\Gamma}_R$ and $\overline{\Gamma}_R$ in $\S$ \ref{subsec:verify}, where $\underline{\Gamma}_R$ will depend on the block size $k$ through $\Gamma_{\smb}$. As $k$ increases, $\Gamma_{\smb}$ will become larger and hence the factor $\lambda_{\max}(R \underline{\Gamma}_R^{-1} R^\T)$ will become smaller, resulting in a sharper error bound.
However, $k$ cannot be too large due to condition \eqref{thm1_condition}. Specifically, this condition arises from applying  the Chernoff bound technique to lower bound the Gram matrix via aggregation of all the size-$k$ blocks, since the probability guarantee of the Chernoff bound will degrade as the number of blocks decreases; see Lemma \ref{lem_A2} in the supplementary material. 	
Therefore, to apply Theorem \ref{thm1} to vector autoregressive processes, a crucial step  will be to derive a feasible region for $k$ that guarantees condition \eqref{thm1_condition}; see $\S$ \ref{subsec:k_condition} for details. 

Similarly,  we can obtain an analogous upper bound for $\widehat{A}-A_*$ in the spectral norm:

\begin{proposition}\label{prop1} Let $\{(X_t, Y_t)\}_{t=1}^n$ be generated by the linearly restricted  stochastic regression model. Fix $\delta\in(0,1)$. Then,  under the conditions of Theorem \ref{thm1}, with probability at least $1-3\delta$, we have
	\begin{equation*}
	\lVert \widehat{A} - A_* \rVert_2 \leq \frac{9\sigma}{\alpha} \left [\frac{\lambda_{\max}\left (\sum_{i=1}^{q}R_i \underline{\Gamma}_R^{-1} R_i^\T \right)}{n}\left \{12m \log\frac{14}{\alpha}+9\log\det(\overline{\Gamma}_R\underline{\Gamma}_{R}^{-1}) +6\log\frac{1}{\delta} \right \} \right]^{1/2}.
	\end{equation*}
\end{proposition}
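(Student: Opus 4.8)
The plan is to reuse the machinery of Theorem \ref{thm1} almost verbatim, changing only the final step in which the estimation error of $\widehat\theta$ is converted into an error on the original parametrization. Recall that Theorem \ref{thm1} controls $\lVert\widehat\beta-\beta_*\rVert = \lVert R(\widehat\theta-\theta_*)\rVert$, and the proof of that theorem presumably produces, on an event of probability at least $1-3\delta$, a bound of the form $\lVert\widehat\theta-\theta_*\rVert_{\underline\Gamma_R}^2 \lesssim (\sigma^2/(\alpha^2 n))\{m\log(1/\alpha)+\log\det(\overline\Gamma_R\underline\Gamma_R^{-1})+\log(1/\delta)\}$, or equivalently a bound on $v^\T(\widehat\theta-\theta_*)$ uniformly over $v$ in a suitable ellipsoid. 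The key observation is that $\widehat A - A_* = (I_q\otimes(\widehat\theta-\theta_*)^\T)\widetilde R$, so for any $\omega\in\mathcal S^{d-1}$ we have $(\widehat A-A_*)\omega = ((\widehat\theta-\theta_*)^\T R_1^\T\omega,\ldots,(\widehat\theta-\theta_*)^\T R_q^\T\omega)^\T$, hence
\[
\lVert(\widehat A-A_*)\omega\rVert^2 = \sum_{i=1}^q \bigl((\widehat\theta-\theta_*)^\T R_i^\T\omega\bigr)^2 = (\widehat\theta-\theta_*)^\T\Bigl(\sum_{i=1}^q R_i^\T\omega\omega^\T R_i\Bigr)(\widehat\theta-\theta_*).
\]
Therefore $\lVert\widehat A-A_*\rVert_2^2 = \sup_{\omega\in\mathcal S^{d-1}}(\widehat\theta-\theta_*)^\T(\sum_i R_i^\T\omega\omega^\T R_i)(\widehat\theta-\theta_*)$, in contrast to $\lVert\widehat A-A_*\rVert_F^2 = \lVert\widehat\beta-\beta_*\rVert^2 = (\widehat\theta-\theta_*)^\T(\sum_i R_i^\T R_i)(\widehat\theta-\theta_*) = (\widehat\theta-\theta_*)^\T\widetilde R^\T\widetilde R(\widehat\theta-\theta_*)$ which is what Theorem \ref{thm1} bounds.

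First I would isolate from the proof of Theorem \ref{thm1} the intermediate inequality $(\widehat\theta-\theta_*)^\T M(\widehat\theta-\theta_*) \le \lambda_{\max}(\underline\Gamma_R^{-1/2}M\underline\Gamma_R^{-1/2})\cdot (\widehat\theta-\theta_*)^\T\underline\Gamma_R(\widehat\theta-\theta_*)$ together with the already-established bound on the $\underline\Gamma_R$-weighted norm $(\widehat\theta-\theta_*)^\T\underline\Gamma_R(\widehat\theta-\theta_*) \le (81\sigma^2/\alpha^2 n)\{12m\log(14/\alpha)+9\log\det(\overline\Gamma_R\underline\Gamma_R^{-1})+6\log(1/\delta)\}$. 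Applying this with $M = \sum_{i=1}^q R_i^\T\omega\omega^\T R_i$ and taking the supremum over $\omega\in\mathcal S^{d-1}$, I would then need to identify $\sup_{\omega\in\mathcal S^{d-1}}\lambda_{\max}\bigl(\underline\Gamma_R^{-1/2}(\sum_i R_i^\T\omega\omega^\T R_i)\underline\Gamma_R^{-1/2}\bigr)$ with $\lambda_{\max}(\sum_{i=1}^q R_i\underline\Gamma_R^{-1}R_i^\T)$. This is the one genuinely new computation. It follows from the fact that $\lambda_{\max}(\underline\Gamma_R^{-1/2}R_i^\T\omega\omega^\T R_i\underline\Gamma_R^{-1/2})$ summed against the rank-one structure can be handled by a change of variables: writing $u = \underline\Gamma_R^{-1/2}(\widehat\theta-\theta_*)$ we get $\lVert\widehat A-A_*\rVert_2^2 = \sup_\omega \lVert (\omega^\T R_1\underline\Gamma_R^{-1/2}u,\ldots,\omega^\T R_q\underline\Gamma_R^{-1/2}u)\rVert^2$, and exchanging the two suprema (over $\omega$ and, implicitly via the bound on $\lVert u\rVert$, over $u$) shows the relevant operator norm is that of the map $\omega\mapsto (R_1\underline\Gamma_R^{-1/2},\ldots)$ acting appropriately, whose squared norm is $\lambda_{\max}(\sum_i R_i\underline\Gamma_R^{-1}R_i^\T)$ since $\sum_i R_i\underline\Gamma_R^{-1/2}(R_i\underline\Gamma_R^{-1/2})^\T = \sum_i R_i\underline\Gamma_R^{-1}R_i^\T$.

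Putting these together yields $\lVert\widehat A-A_*\rVert_2 \le (9\sigma/\alpha)[\lambda_{\max}(\sum_i R_i\underline\Gamma_R^{-1}R_i^\T)/n\,\{12m\log(14/\alpha)+9\log\det(\overline\Gamma_R\underline\Gamma_R^{-1})+6\log(1/\delta)\}]^{1/2}$ on the same $1-3\delta$ event, which is the claimed bound. The main obstacle — really the only non-routine point — is making the supremum-exchange argument in the previous paragraph rigorous, i.e. showing cleanly that the worst-case direction $\omega$ and the error direction $u$ decouple so that the scale factor is exactly $\lambda_{\max}(\sum_i R_i\underline\Gamma_R^{-1}R_i^\T)$ rather than something larger; this is a finite-dimensional linear-algebra fact (essentially that for a block-row operator the operator norm squared equals the largest eigenvalue of the sum of the Gram blocks) and should be dispatched with a short explicit computation. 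Everything else is inherited unchanged from the proof of Theorem \ref{thm1}, in particular Assumptions \ref{assum:bmsb}--\ref{assum:error_subG}, the sample-size condition \eqref{thm1_condition}, and the sub-Gaussian concentration of $Z^\T\eta$.
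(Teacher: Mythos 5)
Your proposal is correct and is essentially the paper's own argument: the paper likewise reuses the $1-3\delta$ event from the proof of Theorem \ref{thm1} (on which $\underline{\Gamma}_{\min} \preceq Z^\T Z$ and $\lVert \mathcal{U}^\T\eta\rVert \leq K$) and only changes the final linear-algebra step, writing $\widehat{A}-A_*=(I_q\otimes \eta^\T\mathcal{U})(I_q\otimes \mathcal{D}^{-}\mathcal{V}^\T)\widetilde{R}$ and bounding $\lVert (I_q\otimes \mathcal{D}^{-}\mathcal{V}^\T)\widetilde{R}\rVert_2^2=\lambda_{\max}[\widetilde{R}^\T\{I_q\otimes (Z^\T Z)^{-1}\}\widetilde{R}]\leq \lambda_{\max}(\sum_{i=1}^{q}R_i\underline{\Gamma}_{\min}^{-1}R_i^\T)$, which is the same conversion you carry out through the $\underline{\Gamma}_R$-weighted norm of $\widehat{\theta}-\theta_*$. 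The one inaccuracy is that $\sup_{\omega\in\mathcal{S}^{d-1}}\lambda_{\max}\{\underline{\Gamma}_R^{-1/2}(\sum_{i=1}^{q}R_i^\T\omega\omega^\T R_i)\underline{\Gamma}_R^{-1/2}\}$ is in general only $\leq \lambda_{\max}(\sum_{i=1}^{q}R_i\underline{\Gamma}_R^{-1}R_i^\T)$ (equality would require the vectors $\underline{\Gamma}_R^{-1/2}R_i^\T\omega$ to be collinear), not equal to it; since you only need the upper-bound direction, your Cauchy--Schwarz/block-row computation still delivers the stated bound.
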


\section{Linearly restricted vector autoregression\label{sec:VAR_upperbd}}
\subsection{Representative examples\label{subsec:var_example}}
We begin by illustrating how  the formulation in $\S$ \ref{sec:Reg_upperbd} can be used to study vector autoregressive models. Four representative examples will be discussed: \VAR($p$) model, banded vector autoregressive model, network vector autoregressive model, and pure unit root process. 

Consider  the \VAR($1$) model, i.e., model \eqref{eq:model_regression} with $Y_t=X_{t+1}\in\mathbb{R}^d$:
\begin{equation}\label{var1_true}
X_{t+1}=A_*X_t+ \eta_t, 
\end{equation}
subject to $\beta_* = R\theta_*+\gamma$, where  $\beta_*=\text{vec}(A_*^\T)\in\mathbb{R}^{d^2}$, $R=(R_1^\T, \ldots, R_d^\T)^\T\in\mathbb{R}^{d^2\times m}$ with $R_i$  being $d\times m$ matrices, $\theta_*\in\mathbb{R}^m$,  and $\gamma=(\gamma_1^\T, \ldots, \gamma_d^\T)^\T\in\mathbb{R}^{d^2}$ with $\gamma_i\in\mathbb{R}^d$. 

\setcounter{example}{0}
\begin{example}[\VAR($p$) model\label{ex:VARp}] Interestingly, vector autoregressive models of order $p<\infty$ can be viewed as linearly restricted \VAR($1$) models. Consider the \VAR($p$) model
	\begin{equation}\label{model:var_p}
	Z_{t+1}=A_{*1}Z_t+A_{*2}Z_{t-1}+\cdots+A_{*p}Z_{t-p+1}+\varepsilon_t,
	\end{equation}
	where $Z_t, \varepsilon_t \in\mathbb{R}^{d_0}$, and $A_{*i}\in\mathbb{R}^{d_0\times d_0}$ for $i=1,\ldots, p$. Denote $d=d_0 p$, $X_t=(Z_t^\T, Z_{t-1}^\T, \ldots, Z_{t-p+1}^\T)^\T \in\mathbb{R}^d$, $\eta_t=(\varepsilon_t^\T, 0, \ldots, 0)^\T\in\mathbb{R}^d$, and 
	\begin{equation}\label{eq:var_p_A}
	A_*=\left(\begin{array}{cccc}
	A_{*1} & \cdots & A_{*p-1} & A_{*p}\\ 
	I_{d_0}& \cdots & 0        & 0     \\
	\vdots & \ddots & \vdots   & \vdots\\
	0      & \cdots & I_{d_0}  & 0     
	\end{array}\right) \in\mathbb{R}^{d\times d}.
	\end{equation}
	As a result, \eqref{model:var_p} can be written  into the \VAR($1$) form in \eqref{var1_true}. As shown in  \eqref{eq:var_p_A},  all entries in the last $d-d_0$ rows of  $A_*$ are restricted to either zero or one. The restriction $\beta_{*i}=1$ can be encoded in $(\mathcal{C},\mu)$ in the same way as Example 1 in $\S$ \ref{subsec:formulation} but with the $i$th entry of $\mu$ set to one. Thus,  with or without restrictions, the \VAR($p$) model can be studied by the same method as that for a linearly restricted \VAR(1) model. Note that the special structure of the innovation $\eta_t$ that some entries of $\eta_t$ are fixed at zero will not pose any extra difficulty.
\end{example}

In the following  examples, we consider \VAR(1) models with various structures for $A_*=(a_{*ij})_{d\times d}$, and set $\gamma=0$ so that the restrictions are in the form of $R\theta=0$:

\begin{example}[Banded vector autoregression\label{ex:bandedAR}] \cite{Guo_Wang_Yao2016} proposed the vector autoregressive model with the following zero restrictions:
	\begin{equation}\label{eq:banded}
	a_{*ij}=0,\hspace{5mm} \vert i-j\vert > k_0,
	\end{equation}
	where the integer  $1\leq k_0 \leq \lfloor (d-1)/2 \rfloor$ is called the bandwidth parameter. Let $b_{*i}\in\mathbb{R}^d$ be the transpose of the $i$th row of $A_*$. Hence, $\beta_*=(b_{*1}^\T,\ldots, b_{*d}^\T)^\T$. Note that the restrictions  are imposed on each $b_{*i}$ separately.
	As a result, the $b_{*i}$'s are determined by non-overlapping subsets of entries in $\theta_*$; that is, we can write $b_{*i}=R_{(i)}\vartheta_{*i}$, where $R_{(i)}\in\mathbb{R}^{d\times m_i}$, $\vartheta_{*i}\in\mathbb{R}^{m_i}$, $\theta_*=(\vartheta_{*1}^\T, \ldots, \vartheta_{*d}^\T)^\T\in\mathbb{R}^m$, and $m=\sum_{i=1}^{d}m_i$. In this case, $R$ is a block diagonal matrix:
	\begin{equation*}
	R=\left(\begin{array}{*{3}c}
	R_{(1)} &  			& 0\\
	& \ddots 	&  \\
	0		& 			& R_{(d)}
	\end{array} \right) \in \mathbb{R}^{d^2\times m},
	\end{equation*}
	and \eqref{eq:banded} can be encoded in $R$ as follows: (1) $m_i=k_0+i$ and $R_{(i)}=(I_{m_i},0)^\T$ if $1\leq i \leq k_0+1$; (2) $m_i=2k_0+1$ and $R_{(i)}=(0_{m_i\times (i-k_0-1)}, I_{m_i}, 0_{m_i\times (d-i-k_0)})^\T$ if $k_0+1<i<d-k_0$; and (3) $m_i=k_0+1+d-i$ and $R_{(i)}=(0,I_{m_i})^\T$ if $d-k_0\leq i \leq d$.
\end{example}

\begin{example}[Network vector autoregression\label{ex:networkAR}] Consider the network model in \cite{Zhu_Pan_Li_Liu_Wang2017}. Let us drop the individual effect and the intercept to ease the notations. This model assumes that all diagonal entries of $A_*$ are equal: $a_{*ii}=\theta_{*1}$ for $1\leq i\leq d$. For the off-diagonal entries,  the zero-nonzero pattern is known and completely determined by the social network: $a_{*ij}\neq 0$ if and only if individual $i$ follows individual $j$. Moreover, all nonzero off-diagonal entries are assumed to be equal: $a_{*ij}=\theta_{*2}$ if $a_{*ij}\neq 0$, for $1\leq i\neq j \leq d$. This model is actually very parsimonious, with only $m=2$, while the network size $d$ can be extremely large. To incorporate the above restrictions, we may define the $d^2\times 2$ matrix $R$ as follows: for  $i=1,\ldots, d^2$, the $i$th row of $R$ is  $(1,0)$ if $\beta_{*i}$ corresponds to a diagonal entry of $A_*$,  $(0,1)$ if $\beta_{*i}$ corresponds to a nonzero off-diagonal entry of $A_*$, and $(0,0)$ if $\beta_{*i}$ corresponds to a zero off-diagonal entry of $A_*$. 
\end{example}

\begin{example}[Pure unit root process\label{ex:pure_unit_root}] Another simple but important case is $A_*=\rho I_d$ with $\rho\in\mathbb{R}$. Then, the smallest true model has $m=1$,  and the corresponding restrictions, $a_{*11}=\cdots=a_{*dd}$ and $a_{*ij}=0$ for $1\leq  i\neq j \leq d$, can be imposed by setting $R=(e_1^\T, \ldots, e_d^\T)^\T \in\mathbb{R}^{d^2}$, where $e_i$ is the $d\times 1$ vector with all elements zero except the $i$th being one. When $\rho=1$, the underlying model becomes the pure unit root process, a classic example of unstable vector autoregressive processes \citep{Hamilton1994}. In particular, the problem of testing $A_*=I_d$, or unit root testing in panel data, has been extensively studied in the asymptotic literature; see \cite{Chang2004} and \cite{Zhang_Pan_Gao2018} for studies in low and high dimensions, respectively. Note that \cite{Zhang_Pan_Gao2018} focused on asymptotic distributions of the largest eigenvalues of the sample covariance matrix of the pure unit root process under  $\lim_{n,d\rightarrow\infty}d/n=0$. It  does not involve parameter estimation and hence cannot be directly compared to this paper.	
\end{example}

The stochastic regression can also incorporate possibly time-dependent exogenous inputs such as individual effects \citep{Zhu_Pan_Li_Liu_Wang2017} and observable factors \citep{Zhou_Bose_Fan_Liu2018}, leading to the class of \VARX\ models \citep[see, e.g.,][]{Wilms_Basu_Bien_Matteson2017}. Since \VARX\ models can be analyzed similarly to vector autoregressive models, we do not pursue the details in this paper.

\subsection{Verification of Assumptions  \ref{assum:bmsb}--\ref{assum:error_subG} in Theorem \ref{thm1}\label{subsec:verify}}

In light of the generalisability to \VAR($p$) models via the \VAR($1$) representation,  to apply the general results in \S\ 2.2 to linearly restricted vector autoregressive models, it suffices to restrict our attention to the \VAR($1$) model in \eqref{var1_true} from now on. 

Following the notations  in $\S$ \ref{sec:Reg_upperbd}, let  $Y=(X_2,\ldots, X_{n+1})^\T$, $Z=(I_d\otimes X)R$, and $A_*=(I_d\otimes \theta_*^\T)\widetilde{R}+G$, where $\widetilde{R}=(R_1,\ldots, R_d)^\T$ and $G=(\gamma_1,\ldots, \gamma_d)^\T$. Note that $q=d$ for the \VAR($1$) model.  In addition, $\{X_t\}$ is adapted to the filtration  
\[\mathcal{F}_{t}=\sigma\{\eta_1,\ldots,\eta_{t-1}\}.\]
The following conditions on $\{X_t\}$ will be invoked in our analysis:

\begin{assumption}\label{assum:var}
	(i) The process $\{X_t\}$ starts at $t=0$ with $X_0=0$; (ii) the innovations $\{\eta_t\}$ are independent and identically distributed with $E(\eta_t)=0$ and $\var(\eta_t)=\Sigma_\eta=\sigma^2 I_d$;  (iii) there is a universal constant $C_0>0$ such that, for every $\nu\in\mathbb{R}^{d}$ with $\nu^\T \Sigma_\eta \nu\neq0$, the density of $\nu^\T \eta_t/(\nu^\T \Sigma_\eta \nu)^{1/2}$ is bounded from above by $C_0$ almost everywhere; and (iv) $\{\eta_t\}$ are $\sigma^2$-sub-Gaussian. 
\end{assumption}

Under Assumption \ref{assum:var}(i),  we can simply write $X_t$ in the finite-order moving average form, $X_t=\sum_{s=0}^{t-1}A_*^{s}\eta_{t-s-1}$ for any $t\geq1$. Then, by Assumption \ref{assum:var}(ii), $\var(X_t)=\sigma^2\Gamma_t$, where 
\begin{equation}\label{eq:Gramian}
\Gamma_t=\sum_{s=0}^{t-1} A_*^s (A_*^\T)^s
\end{equation}
is called the finite-time controllability Gramian \citep{Simchowitz2018}. Note that $\var(X_t)<\infty$ for whatever $A_*$.  By contrast, the typical setup in  asymptotic theory of stable processes assumes that  $\{X_t\}$ starts at $t=-\infty$. In this case, $\{X_t\}_{t\in\mathbb{Z}}$ has the infinite-order moving average form $X_t=\sum_{s=0}^{\infty}A_*^s\eta_{t-s-1}$, so $\var(X_t)=\sigma^2\sum_{s=0}^{\infty} A_*^s (A_*^\T)^s=\sigma^2\lim_{t\rightarrow\infty} \Gamma_{t}<\infty$
if and only if $\rho(A_*)<1$.   Thus, an important benefit of Assumption \ref{assum:var}(i) is that it allows us to  capture the possibly explosive behavior of $\var(X_t)$ over any finite time horizon, and derive upper bounds of the Gram matrix over different stability regimes; see Lemmas \ref{lem_gram1} and \ref{lem_gram} in this subsection.  

The condition $\Sigma_\eta=\sigma^2 I_d$ in Assumption \ref{assum:var}(ii) is imposed for simplicity. However, we can easily extend all the proofs in this paper to the general case with any symmetric matrix $\Sigma_\eta\succeq0$: we only need to rederive all results  with the role of $\Gamma_t$ replaced by $\sum_{s=0}^{t-1} A_*^s \Sigma_\eta (A_*^\T)^s$.
Assumption \ref{assum:var}(iii) is used to establish the block martingale small ball condition, i.e., Assumption \ref{assum:bmsb}, for vector autoregressive processes, and it allows us to  lower bound the small ball probability by leveraging Theorem 1.2 in \cite{Rudelson_Vershynin2015}   on densities of sums of independent random variables; see  also Remark  \ref{remark:normal}. Essentially,  Assumption \ref{assum:var}(iii) only requires that the distribution of any one-dimensional projection of the innovation is well spread on the real line.   Examples of such distributions include multivariate normal and  multivariate $t$ \citep{Kotz_Nadarajah2004} distribution and, more generally, elliptical distributions \citep{Fang_Kotz_Ng1990} with the consistency property in \cite{Kano1994}.  Lastly, it is clear that Assumptions \ref{assum:var}(ii) and (iv) guarantee Assumption \ref{assum:error_subG}.

\begin{remark}
	In asymptotic theory, stable and unstable processes require substantially different techniques, and results derived under $\rho(A_*)<1$ typically cannot be carried over to  unstable processes. For example, the convergence rate of the ordinary least squares estimator for fixed-dimensional unstable \VAR(1) processes  is $n$ instead of $n^{1/2}$, and the limiting distribution is no longer  normal \citep{Hamilton1994}. 
\end{remark}

\begin{remark}
	The controllability Gramian $\Gamma_t$ is interpretable even without Assumption \ref{assum:var}(i). By recursion, we have $X_{s+t}=\sum_{\ell=0}^{t-1}A_*^{\ell}\eta_{s+t-\ell-1}+A_*^tX_{s}$ for any time point $s$ and  duration $t\geq1$. As a result, $\var(X_{s+t}\mid \mathcal{F}_{s})=\sum_{\ell=0}^{t-1} A_*^\ell \Sigma_\eta (A_*^\T)^\ell$, and it simply becomes $\sigma^2\Gamma_t$ if $\Sigma_\eta=\sigma^2 I_d$.  Note that $\var(X_{s+t}\mid \mathcal{F}_{s})$, or equivalently $\Gamma_t$, is a partial sum of a geometric sequence due to the autoregressive structure. Roughly speaking, larger $A_*$ means more persistent impact of $\eta_t$.  
\end{remark}

In the following, we present three lemmas for the linearly restricted vector autoregressive model. Lemma \ref{lem_bmsb} establishes the block martingale small ball condition by specifying  $\Gamma_{\smb}$, while  Lemmas \ref{lem_gram1} and \ref{lem_gram}  verify Assumption \ref{assum:upper_matrix} by providing two possible specifications of $\overline{\Gamma}$. 

Some additional notations to be used in Lemma \ref{lem_gram} are introduced as follows: Denote by $\Sigma_X$  the covariance matrix of the  $d n\times 1$ vector $\vect(X^\T)=(X_1^\T, \dots, X_n^\T)^\T$, so the $(t,s)$th $d\times d$ block of $\Sigma_X$ is $E(X_tX_s^\T)$, for $1\leq t,s\leq n$. Then define
\begin{equation}\label{xi}
\xi=\xi(m,d,n,\delta) =2 \left \{\frac{\lambda_{\max}(\Gamma_n)  \psi(m,d,\delta) \lVert \Sigma_X \rVert_2}{\sigma^2 n} \right \}^{1/2} +  \frac{2 \psi(m,d,\delta)\lVert \Sigma_X \rVert_2}{\sigma^2 n},
\end{equation}
where $\psi(m,d,\delta)=C_1 \{m\log 9+\log d+\log(2/\delta)\}$, and $C_1>0$ is a universal constant.

\begin{lemma}\label{lem_bmsb} 
	Suppose that $\{X_t\}_{t=1}^{n+1}$ follows $X_{t+1}=A_*X_t+\eta_t$ for $t=0,1,\dots, n$. Under Assumptions \ref{assum:var}(ii) and (iii), for any $1\leq k \leq \lfloor n/2 \rfloor$, $\{X_t\}_{t=1}^n$ satisfies the $(2k,\Gamma_{\smb},1/10)$-\BMSB\ condition, where $\Gamma_{\smb}=\sigma^2\Gamma_{k}/ (4C_0)^2$. 
\end{lemma}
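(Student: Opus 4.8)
The plan is to verify Definition~\ref{def:BMSB}(ii) directly, reducing to one-dimensional projections. Fix $\omega\in\mathcal{S}^{d-1}$ and an integer $s\geq 0$, and write $\nu_\omega=(\omega^\T\Gamma_{\smb}\omega)^{1/2}=\sigma(\omega^\T\Gamma_k\omega)^{1/2}/(4C_0)$; it suffices to show
\[
\frac{1}{2k}\sum_{t=1}^{2k}\P\bigl(|\omega^\T X_{s+t}|\geq\nu_\omega\mid\mathcal{F}_s\bigr)\;\geq\;\frac{1}{10}\quad\text{almost surely}
\]
(the constraint $1\leq k\leq\lfloor n/2\rfloor$ only guarantees that the block size $2k$ does not exceed $n$). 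Unrolling the recursion gives $X_{s+t}=A_*^{t}X_s+\sum_{\ell=0}^{t-1}A_*^{\ell}\eta_{s+t-1-\ell}$, and by Assumption~\ref{assum:var}(ii) the fresh innovations $\eta_s,\dots,\eta_{s+t-1}$ are i.i.d.\ and independent of $\mathcal{F}_s$; hence, conditionally on $\mathcal{F}_s$, the scalar $\omega^\T X_{s+t}$ is a sum of independent terms plus the $\mathcal{F}_s$-measurable shift $\omega^\T A_*^{t}X_s$.

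The crux is that for every $t\in\{k,k+1,\dots,2k\}$, i.e.\ for at least $k$ of the $2k$ indices, one can isolate a ``fresh block'' of exactly $k$ innovations carrying the required variance. Put $S_t=\omega^\T\sum_{i=0}^{k-1}A_*^{i}\eta_{s+t-1-i}$, the contribution of the most recent $k$ innovations $\eta_{s+t-k},\dots,\eta_{s+t-1}$, and $T_t=\omega^\T X_{s+t}-S_t$. Because the innovations entering $S_t$ are disjoint from those entering $T_t$, the variable $S_t$ is independent of the $\sigma$-field $\mathcal{H}_t$ generated by $\mathcal{F}_s$ together with $\eta_s,\dots,\eta_{s+t-1-k}$, while $T_t$ is $\mathcal{H}_t$-measurable; and by Assumption~\ref{assum:var}(ii) with the definition~\eqref{eq:Gramian} of $\Gamma_k$, $\var(S_t)=\sigma^2\sum_{i=0}^{k-1}\lVert(A_*^\T)^{i}\omega\rVert^2=\sigma^2\,\omega^\T\Gamma_k\omega$.

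Next I would establish an anti-concentration bound for $S_t$; this is where Assumption~\ref{assum:var}(iii) enters and the argument leaves the Gaussian setting of \cite{Simchowitz2018}. The summands $((A_*^\T)^{i}\omega)^\T\eta_{s+t-1-i}$, $i=0,\dots,k-1$, are independent, the $i=0$ term is nondegenerate because $\lVert\omega\rVert=1$, and each nondegenerate one has density at most $C_0/(\sigma\lVert(A_*^\T)^{i}\omega\rVert)$ by Assumption~\ref{assum:var}(iii). Applying Theorem~1.2 of \cite{Rudelson_Vershynin2015} to this sum of independent variables with bounded marginal densities bounds the density of $S_t$, hence the conditional density of $\omega^\T X_{s+t}=S_t+T_t$ given $\mathcal{H}_t$, by $\bar c\,C_0/\{\sigma(\omega^\T\Gamma_k\omega)^{1/2}\}$ for an absolute constant $\bar c>0$. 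Therefore, whatever the value of $T_t$, $\P(|\omega^\T X_{s+t}|\geq\nu_\omega\mid\mathcal{H}_t)\geq 1-2\nu_\omega\bar c\,C_0/\{\sigma(\omega^\T\Gamma_k\omega)^{1/2}\}=1-\bar c/2$, and the constant $4$ in the definition of $\Gamma_{\smb}$ is calibrated to $\bar c$ so that this is at least $1/5$. Taking $E(\cdot\mid\mathcal{F}_s)$ and summing over the at least $k$ good indices then gives $\frac{1}{2k}\sum_{t=1}^{2k}\P(|\omega^\T X_{s+t}|\geq\nu_\omega\mid\mathcal{F}_s)\geq\frac{k}{2k}\cdot\frac15=\frac{1}{10}$, which is the claim since $\omega$ and $s$ are arbitrary.

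I expect the delicate step to be this anti-concentration estimate in the non-Gaussian regime: under normality $S_t/(\var S_t)^{1/2}$ is exactly standard normal and the bound is elementary, whereas in general one must call on the quantitative small-ball inequality of \cite{Rudelson_Vershynin2015} for linear combinations of independent variables with merely bounded densities, and then track the absolute constants carefully so that the scaling $\Gamma_{\smb}=\sigma^2\Gamma_k/(4C_0)^2$ and the value $\alpha=1/10$ come out exactly as stated. A secondary, essentially bookkeeping, point is to confirm that $S_t$ is genuinely independent of the offset $T_t$ and of $\mathcal{F}_s$, and that the degenerate summands with $(A_*^\T)^{i}\omega=0$ are harmless.
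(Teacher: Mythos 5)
Your proposal is correct and follows essentially the same route as the paper: reduce to one-dimensional projections, split $\omega^\T X_{s+t}$ into a fresh independent part plus a measurable shift, bound the density of the fresh part via Theorem 1.2 of Rudelson--Vershynin (which gives $\bar c=\surd 2$, so $1-\surd2/2\geq 1/5$), and keep only the indices $t\in\{k,\dots,2k\}$ to get the $1/10$. The only cosmetic difference is that you condition out all but the $k$ most recent innovations, whereas the paper keeps all $t$ fresh innovations and uses $\Gamma_t\succeq\Gamma_k$; the constants come out the same either way.
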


\begin{lemma}\label{lem_gram1}
	Let $\{X_t\}_{t=1}^{n+1}$ be generated by the linearly restricted vector autoregressive model.  Under Assumptions \ref{assum:var}(i) and (ii), for any $\delta\in(0,1)$, it holds  $\P(Z^\T Z \npreceq n\overline{\Gamma}_R)\leq \delta$, where  $\overline{\Gamma}_R=R^\T (I_d\otimes \overline{\Gamma}) R$, with $\overline{\Gamma}=\sigma^2 m\Gamma_{n}/\delta$.
\end{lemma}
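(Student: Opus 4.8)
The plan is to reduce the positive-semidefinite inequality for the $m\times m$ matrix $Z^\T Z$ to a single scalar tail bound obtained from Markov's inequality, exploiting that $Z^\T Z$ lives in dimension $m$ rather than $d^2$. Write $\Gamma_{n,R}=R^\T(I_d\otimes\Gamma_n)R$, which is $m\times m$ and positive definite since $R$ has full column rank $m$ and $\Gamma_n\succeq I_d\succ 0$. Because $\overline{\Gamma}_R=R^\T(I_d\otimes\sigma^2 m\Gamma_n/\delta)R=(\sigma^2 m/\delta)\Gamma_{n,R}$, the bad event $\{Z^\T Z\npreceq n\overline{\Gamma}_R\}$ is exactly $\{\lambda_{\max}(\Gamma_{n,R}^{-1/2}Z^\T Z\,\Gamma_{n,R}^{-1/2})>n\sigma^2 m/\delta\}$, i.e. $\{\lVert Z\Gamma_{n,R}^{-1/2}\rVert_2^2>n\sigma^2 m/\delta\}$, which is contained in $\{\lVert Z\Gamma_{n,R}^{-1/2}\rVert_F^2>n\sigma^2 m/\delta\}$ since $\lVert\cdot\rVert_2\leq\lVert\cdot\rVert_F$. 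So it suffices to bound the probability of the latter.

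Next I would compute $E\lVert Z\Gamma_{n,R}^{-1/2}\rVert_F^2=E\,\tr(\Gamma_{n,R}^{-1}Z^\T Z)=\tr(\Gamma_{n,R}^{-1}E(Z^\T Z))$. Using $Z=(I_d\otimes X)R$ together with Assumption \ref{assum:var}(i)--(ii), one has $E(X_tX_t^\T)=\sigma^2\Gamma_t$, hence $E(X^\T X)=\sigma^2\sum_{t=1}^n\Gamma_t$ and $E(Z^\T Z)=\sigma^2 R^\T\big(I_d\otimes\sum_{t=1}^n\Gamma_t\big)R$. The monotonicity $\Gamma_t\preceq\Gamma_n$ for $t\leq n$ (since $\Gamma_n-\Gamma_t=\sum_{s=t}^{n-1}A_*^s(A_*^\T)^s\succeq0$) gives $\sum_{t=1}^n\Gamma_t\preceq n\Gamma_n$; tensoring with $I_d$ and conjugating by $R$ then yields $E(Z^\T Z)\preceq n\sigma^2\Gamma_{n,R}$. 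Consequently $\tr(\Gamma_{n,R}^{-1}E(Z^\T Z))\leq n\sigma^2\tr(\Gamma_{n,R}^{-1}\Gamma_{n,R})=n\sigma^2 m$, using that $\tr(PM)\leq\tr(PN)$ whenever $0\preceq M\preceq N$ and $P\succeq0$.

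Finally, Markov's inequality gives $\P(\lVert Z\Gamma_{n,R}^{-1/2}\rVert_F^2>n\sigma^2 m/\delta)\leq\delta$, and combining this with the set inclusion from the first step delivers $\P(Z^\T Z\npreceq n\overline{\Gamma}_R)\leq\delta$. I do not expect a genuine obstacle; the one point requiring care is to keep the dimension factor at $m$ rather than $d$, which is precisely why one must bound $Z^\T Z$ directly (so that $\tr(I_m)=m$) instead of first proving $X^\T X\preceq n\overline{\Gamma}$ and conjugating by $R$, which would only produce the cruder factor $d$. The sole auxiliary check is the invertibility of $\Gamma_{n,R}$, which is immediate from the full-column-rank assumption on $R$ and $\Gamma_n\succ0$.
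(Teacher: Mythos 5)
Your proof is correct and follows essentially the same route as the paper's: both bound the event via the $m\times m$ matrix $Z^\T Z$ normalized by $R^\T(I_d\otimes\Gamma_n)R$, use $E(Z^\T Z)\preceq n\sigma^2 R^\T(I_d\otimes\Gamma_n)R$, and combine Markov's inequality with the bound $\lambda_{\max}(\cdot)\leq\tr(\cdot)$, which is exactly where the factor $m$ (rather than $d$) enters. The only difference is cosmetic: the paper applies Markov to $\lambda_{\max}$ of the normalized Gram matrix and then bounds its expectation by the expected trace, whereas you pass from the spectral-norm event to the Frobenius-norm event first and then apply Markov to the trace.
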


\begin{lemma}\label{lem_gram}
	Let $\{X_t\}_{t=1}^{n+1}$ be generated by the linearly restricted vector autoregressive model. Under Assumptions \ref{assum:var}(i) and (ii), if $\{\eta_t\}$ are normal, then for any $\delta\in(0,1)$, it holds  $\P(Z^\T Z \npreceq n\overline{\Gamma}_R)\leq \delta$, where  $\overline{\Gamma}_R=R^\T (I_d\otimes \overline{\Gamma}) R$, with $\overline{\Gamma}=\sigma^2 \Gamma_{n}+ \sigma^2 \xi I_d$, and $\xi=\xi(m,d,n,\delta)$ is defined in \eqref{xi}.
\end{lemma}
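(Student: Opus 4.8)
The plan is to reduce the matrix bound $Z^\T Z\preceq n\overline{\Gamma}_R$ to a family of one‑dimensional Gaussian quadratic‑form concentration statements, one for each coordinate block, and then to aggregate. Write $Z^\T Z=R^\T(I_d\otimes X^\T X)R=\sum_{i=1}^{d}R_i^\T X^\T X R_i$ and, correspondingly, $n\overline{\Gamma}_R=\sum_{i=1}^{d}\{n\sigma^2 R_i^\T\Gamma_n R_i+n\sigma^2\xi\, R_i^\T R_i\}$. Under Assumption \ref{assum:var}(i)--(ii) we have $\var(X_t)=\sigma^2\Gamma_t$ and $\Gamma_t\preceq\Gamma_n$ for $t\le n$, so $E(R_i^\T X^\T X R_i)=\sigma^2\sum_{t=1}^{n}R_i^\T\Gamma_t R_i\preceq n\sigma^2 R_i^\T\Gamma_n R_i$. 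Hence it suffices to prove, for every $i$, the centered deviation bound $R_i^\T X^\T X R_i-E(R_i^\T X^\T X R_i)\preceq n\sigma^2\xi\, R_i^\T R_i$ on an event of probability at least $1-\delta/d$, and then union bound over $i=1,\dots,d$ and sum the $d$ matrix inequalities.

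Fix $i$. Since $\ker(R_i)$ is contained in the kernel of the centered matrix, the inequality only needs to be tested on directions $v$ with $\|R_i v\|=1$; passing to an orthonormal basis of $\mathrm{range}(R_i^\T R_i)$ and renormalising by $R_i^\T R_i$ turns it into a spectral‑norm bound $\|\widetilde D_i\|_2\le n\sigma^2\xi$ for a symmetric matrix of size at most $m$. I would cover the corresponding unit sphere by a $1/4$‑net of cardinality at most $9^m$. For a fixed net direction, writing $a=R_i w\in\mathbb{R}^d$ with $\|a\|=1$ and $\mathbf{X}=\vect(X^\T)\sim N(0,\Sigma_X)$ (a Gaussian vector, because $X_0=0$ makes each $X_t$ a linear function of the normal innovations), the relevant random variable is $\mathbf{X}^\T(I_n\otimes aa^\T)\mathbf{X}$ minus its mean $\tr\{\Sigma_X(I_n\otimes aa^\T)\}=\sigma^2\sum_{t=1}^{n}a^\T\Gamma_t a$. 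The Laurent--Massart / Hanson--Wright inequality bounds this centered quadratic form by $2\{\tr(M^2)t\}^{1/2}+2\|M\|_2 t$ with probability at least $1-2e^{-t}$, where $M=\Sigma_X^{1/2}(I_n\otimes aa^\T)\Sigma_X^{1/2}$ obeys $\|M\|_2\le\|\Sigma_X\|_2$, $\tr(M)=\sigma^2\sum_{t=1}^{n}a^\T\Gamma_t a\le n\sigma^2\lambda_{\max}(\Gamma_n)$, and $\tr(M^2)\le\|M\|_2\tr(M)\le n\sigma^2\lambda_{\max}(\Gamma_n)\|\Sigma_X\|_2$.

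Taking $t$ to be a fixed small multiple of $\psi(m,d,\delta)$ makes $2\{\tr(M^2)t\}^{1/2}+2\|M\|_2 t$ at most a fixed fraction of $n\sigma^2\xi$: indeed, by \eqref{xi}, $n\sigma^2\xi$ equals $2\{n\sigma^2\lambda_{\max}(\Gamma_n)\|\Sigma_X\|_2\psi\}^{1/2}+2\|\Sigma_X\|_2\psi$, so the bookkeeping matches automatically once the universal constant $C_1$ inside $\psi$ is fixed. A union bound over the at most $d\cdot 9^m$ pairs $(i,w)$, with $C_1$ chosen large enough to dominate $m\log 9+\log d+\log(2/\delta)$, forces all these events to hold simultaneously with probability at least $1-\delta$. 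On that event, the standard net‑to‑sphere passage for symmetric matrices ($\|M\|_2\le 2\max_{\text{net}}|v^\T M v|$ for a $1/4$‑net), whose constant‑factor loss was already absorbed into the choice of $t$, gives $\widetilde D_i\preceq n\sigma^2\xi\, I$, hence $R_i^\T X^\T X R_i-E(R_i^\T X^\T X R_i)\preceq n\sigma^2\xi\, R_i^\T R_i$; summing over $i$ yields $Z^\T Z\preceq n\overline{\Gamma}_R$.

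The main obstacle is the dependence among the rows of $X$: matrix Bernstein‑type devices do not apply, and the whole argument rests on the joint Gaussianity of $\vect(X^\T)$ together with sharp control of its covariance $\Sigma_X$ — this is exactly why $\|\Sigma_X\|_2$ and $\lambda_{\max}(\Gamma_n)$ enter $\xi$, and why the lemma assumes normal innovations rather than the weaker sub‑Gaussian condition behind the cruder, Markov‑based Lemma \ref{lem_gram1}. A secondary point requiring care is that the conclusion is a matrix inequality, not a scalar one, so the covering argument must be mounted on the rank‑reduced, $R_i^\T R_i$‑normalised sphere and then aggregated over the $d$ blocks (which is also the origin of the $\log d$ term in $\psi$), with $C_1$ absorbing the covering‑number and net‑extension overheads.
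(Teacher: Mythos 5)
Your proposal is correct and follows essentially the same route as the paper's proof: the same block decomposition $Z^\T Z=\sum_i R_i^\T X^\T X R_i$ reduced to per-block centered deviations $R_i^\T\{X^\T X-E(X^\T X)\}R_i\preceq n\sigma^2\xi R_i^\T R_i$, the same $1/4$-net of size $9^m$ with the factor-$2$ net-to-sphere passage, and the same Hanson--Wright bound on the Gaussian quadratic form with $\lVert\Sigma_\omega\rVert_2\leq\lVert\Sigma_X\rVert_2$ and $\lVert\Sigma_\omega\rVert_F^2\leq\lVert\Sigma_X\rVert_2\,\tr(\Sigma_\omega)\leq n\sigma^2\lambda_{\max}(\Gamma_n)\lVert\Sigma_X\rVert_2$, yielding exactly the form of $\xi$ in \eqref{xi}. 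The only differences are cosmetic (Laurent--Massart form of the tail versus the $\min$ form, and working on the $R_i^\T R_i$-normalised sphere directly rather than via $Q_i=XR_i(R_i^\T R_i)^{-1/2}$).
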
	

\begin{remark}\label{remark:normal}
	Unlike \cite{Simchowitz2018}, by leveraging \cite{Rudelson_Vershynin2015}, we establish the  block martingale small ball condition without the normality assumption. If Assumption \ref{assum:var}(ii) is relaxed to the general $\var(\eta_t)=\Sigma_\eta\succeq 0$, by a straightforward extension of the proof of Lemma \ref{lem_bmsb}, we can show that Lemma \ref{lem_bmsb} holds with $\Gamma_{\smb}=\sum_{\ell=0}^{k-1} A_*^\ell \Sigma_\eta (A_*^\T)^\ell/ (4C_0)^2$.  
\end{remark}

\begin{remark}\label{remark:hansonw}
	Lemma \ref{lem_gram1} is  a simple consequence of the Markov inequality and the property that $\lambda_{\max}(\cdot)\leq \tr(\cdot)$, so no distributional assumption on $\eta_t$ is required. However,  Lemma \ref{lem_gram} relies on the Hanson-Wright inequality \citep{Vershynin2018}, where the normality assumption is invoked. Although adopting the $\overline{\Gamma}$ in  Lemma \ref{lem_gram}  can eliminate a factor of $\log m$ in the resulting estimation error bounds, the $\overline{\Gamma}$ in Lemma \ref{lem_gram1} actually leads to sharper bounds under certain conditions on $A_*$; see $\S$ \ref{subsec:k_condition} and $\S$ \ref{subsec:var_ub} for details. 
\end{remark}

By Lemma \ref{lem_bmsb}, for any $1\leq k \leq \lfloor n/2 \rfloor$, the matrix  $\underline{\Gamma}_{R}$ in Theorem \ref{thm1} can be specified as
\begin{equation}\label{Gamma_under_R}
\underline{\Gamma}_{R}= \sigma^2 R^\T (I_d\otimes \Gamma_k) R/(4C_0)^2.
\end{equation}
By Lemmas \ref{lem_gram1} and \ref{lem_gram}, the matrix $\overline{\Gamma}_R$ in Theorem \ref{thm1} can be chosen as
$\overline{\Gamma}_R=\overline{\Gamma}_{R}^{(1)}$ or $\overline{\Gamma}_{R}^{(2)}$, where
\begin{equation}\label{Gamma_over_R}
\overline{\Gamma}_{R}^{(1)}=\sigma^2 m R^\T (I_d\otimes \Gamma_n) R/\delta, \quad 
\overline{\Gamma}_{R}^{(2)}=\sigma^2 R^\T (I_d\otimes \Gamma_n) R + \sigma^2 \xi(m,d,n,\delta) R^\T R;
\end{equation}
recall the definitions of $\underline{\Gamma}_{R}$ and $\overline{\Gamma}_R$  in \eqref{notation:Gammaline_R}, where $q=d$ for the \VAR($1$) model.
Furthermore, observe that $\underline{\Gamma}_{R}$ in \eqref{Gamma_under_R} and  the two $\overline{\Gamma}_R$'s in \eqref{Gamma_over_R}, which serve as lower and upper bounds of $Z^\T Z$, respectively, are all related to the controllability Gramian $\Gamma_{t}$, and $0\prec I_d \preceq\Gamma_{k}\preceq\Gamma_{n}$. 

\subsection{Feasible region of $k$\label{subsec:k_condition}}

With $\underline{\Gamma}_{R}$ and $\overline{\Gamma}_R$ chosen as in \eqref{Gamma_under_R} and \eqref{Gamma_over_R}, the term $\log \det(\overline{\Gamma}_R\underline{\Gamma}_{R}^{-1})$ in condition \eqref{thm1_condition} in Theorem \ref{thm1} is intricately  dependent on both $k$ and $n$. Thus, in order to apply the theorem to model \eqref{var1_true}, we need to verify the existence of the block size $k$ satisfying  \eqref{thm1_condition}. This boils down to deriving an explicit upper bound of $\log \det(\overline{\Gamma}_R\underline{\Gamma}_{R}^{-1})$ free of $k$. 

By \eqref{eq:Gramian} and \eqref{Gamma_under_R}, it is easy to show that  $\det(\underline{\Gamma}_{R})$ is monotonic increasing in $k$. Then, as $\overline{\Gamma}_R$ is free of $k$, $\log \det(\overline{\Gamma}_R\underline{\Gamma}_{R}^{-1})$ is maximized when $k=1$. As a result, we can first upper bound $\log \det(\overline{\Gamma}_R\underline{\Gamma}_{R}^{-1})$ by its value at $k=1$ to get rid of its dependence on $k$. That is,
\begin{equation}\label{logdet1} 
\log \det(\overline{\Gamma}_R\underline{\Gamma}_{R}^{-1}) \leq \log \det\{\overline{\Gamma}_R (\sigma^2 R^\T R)^{-1} (4C_0)^2\}.
\end{equation}

Now it suffices to upper bound the right-hand side of \eqref{logdet1}, where $\overline{\Gamma}_R$ can be chosen from $\overline{\Gamma}_{R}^{(1)}$ and $\overline{\Gamma}_{R}^{(2)}$ in \eqref{Gamma_over_R}.
As shown in the supplementary material,
\begin{equation}\label{logdet} 
\log \det\{\overline{\Gamma}_R (\sigma^2 R^\T R)^{-1} (4C_0)^2\} \lesssim \begin{cases}
m\log(m/\delta)+\kappa, & \text{if}\quad \overline{\Gamma}_R=\overline{\Gamma}_{R}^{(1)}\\
m\log \{2\max(1,\xi)\}+\kappa, & \text{if}\quad \overline{\Gamma}_R=\overline{\Gamma}_{R}^{(2)}
\end{cases},
\end{equation}
where $\xi=\xi(m,d,n,\delta)$ is defined as in \eqref{xi}, and
\begin{equation}\label{kappa}
\kappa =\log \det\left \{R^\T (I_d\otimes \Gamma_n) R (R^\T R)^{-1} \right \}.
\end{equation}

Obviously, without imposing normality, we can only choose $\overline{\Gamma}_{R}=\overline{\Gamma}_{R}^{(1)}$;  see Lemmas \ref{lem_gram1} and \ref{lem_gram} in $\S$ \ref{subsec:verify}. However, if $\{\eta_t\}$ are normal,  $\overline{\Gamma}_R$ can be set to whichever of $\overline{\Gamma}_{R}^{(1)}$ and $\overline{\Gamma}_{R}^{(2)}$  delivers the sharper upper bound. Note that both $\kappa$ and $\xi$ depend on $n$, and their growth rates with respect to $n$ depend on the magnitude of $A_*$.  This will ultimately affect the choice between $\overline{\Gamma}_{R}^{(1)}$ and $\overline{\Gamma}_{R}^{(2)}$; e.g., if $\kappa$ is the dominating term in both upper bounds in \eqref{logdet}, we will be indifferent between the two.  Assumptions \ref{assum:explosive}--\ref{assum:stable2} below summarize the three cases of $A_*$ we consider:

\begin{assumption}\label{assum:explosive}
	It holds $\rho(A_*)\leq 1+c/n$, where $c>0$ is a universal constant.
\end{assumption}

\begin{assumption}\label{assum:stable1}
	It holds $\rho(A_*)\leq \bar{\rho} <1$ and $\lVert A_*\rVert_2 \leq C$, where $\bar{\rho}, C >0$ are universal constants.
\end{assumption}

\renewcommand{\theassumption}{\arabic{assumption}$^\prime$}
\setcounter{assumption}{5}
\begin{assumption}\label{assum:stable2}
	It holds $\rho(A_*)\leq \bar{\rho} <1$, $\mu_{\min}(\mathcal{A})=\inf_{\lVert z \rVert=1} \lambda_{\min}(\mathcal{A}^*(z)\mathcal{A}(z))\geq \mu_1$, and  $\lVert A_*^t\rVert_2 \leq C \varrho^t$ for any integer $1\leq t\leq n$, where  $\bar{\rho}, \mu_1, C >0$ and $\varrho\in(0,1)$ are universal constants, and $\mathcal{A}(z)=I_d-A_*z$ for any complex number $z$.
\end{assumption}

Assumption  \ref{assum:explosive} is the most general case among the above three, and Assumption \ref{assum:stable1} is weaker than Assumption \ref{assum:stable2}.  Notably, Assumption \ref{assum:stable2} does not require $\lVert A_*\rVert_2 < 1$ because $C$ may be greater than one. \cite{Guo_Wang_Yao2016} assumed  $\lVert A_*^t\rVert_2 \leq \varrho^t$ for $\varrho\in(0,1)$ and any positive integer $t$, while it is unclear if this can be relaxed to $\lVert A_*^t\rVert_2 \leq C \varrho^t$ as in Assumption \ref{assum:stable2}. We need  $\mu_{\min}(\mathcal{A})$ to be bounded away from zero in order to derive a sharp upper bound on $\lVert \Sigma_X \rVert_2$; see Remark \ref{remark:xi} below.  This condition is also necessary for the estimation error rates derived in \cite{Basu_Michailidis2015} for a similar reason. In particular, if $A_*$ is diagonalizable, it is shown in Proposition 2.2 therein that $\mu_{\min}(\mathcal{A})\geq \left[ \{1-\rho(A_*)\}/\cond(S) \right]^2$, where $S$ is defined in \eqref{eq:jordan} below.

\begin{remark}\label{remark:kappa}
	From \eqref{kappa}, $\kappa$ is dependent on $n$ through $\Gamma_n$.  If $\rho(A_*)<1$,  it holds
	$\Gamma_{n}\preceq\Gamma_{\infty}=\lim_{n\rightarrow\infty}\Gamma_{n}<\infty$, and then
	$\kappa \leq \log \det\left \{R^\T (I_d\otimes \Gamma_\infty) R ( R^\T R)^{-1} \right \}$,
	an upper bound free of $n$. By contrast, if $\rho(A_*)\geq 1$, $\Gamma_{\infty}$ no longer exists, and we need to carefully control the growth rate of $\Gamma_{n}$; see Lemma \ref{lem_Gamma_n} in the supplementary material. This is achieved via the Jordan decomposition of $A_*$ in \eqref{eq:jordan} below, and the mildest condition we need is Assumption \ref{assum:explosive}. The upper bound of $\kappa$ under Assumption \ref{assum:explosive} or \ref{assum:stable1} is given by Lemma \ref{lem_kappa} in the supplementary material.
\end{remark}

\begin{remark}\label{remark:xi}
	For $\xi$ defined in \eqref{xi}, it is worth noting that $\lVert \Sigma_X \rVert_2$ depends on $n$, as $\Sigma_X=[E(X_tX_s^\T)]_{1\leq t,s\leq n}$, where $E(X_tX_s^\T)=\sigma^2  A_*^{t-s} \Gamma_s$ for $1\leq s\leq t\leq n$ under Assumptions \ref{assum:var}(i) and (ii). Unlike $\kappa$ discussed in Remark \ref{remark:kappa}, even under Assumption \ref{assum:stable1},  $\lVert \Sigma_X \rVert_2$ is not guaranteed to be bounded by a constant free of $n$; indeed, we need Assumption \ref{assum:stable2} for this purpose, since $\lVert \Sigma_X \rVert_2$ is affected by not only the growing diagonal blocks $\sigma^2\Gamma_1, \dots, \sigma^2\Gamma_n$ but also the growing off-diagonal blocks; see Lemma \ref{lem_Sigma_X} in the supplementary material for details.  The upper bound of $\xi$ under Assumption \ref{assum:explosive} or \ref{assum:stable2} is given by  Lemma \ref{lem_xi} in the supplementary material.
\end{remark}

Let the Jordan decomposition of $A_*$ be 
\begin{equation}\label{eq:jordan}
A_*=SJS^{-1},
\end{equation}
where $J$ has $L$ blocks with sizes $1\leq b_1,\ldots, b_L \leq d$, and both $J$ and $S$ are $d\times d$ complex matrices. Let 
$b_{\max}=\max_{1\leq \ell \leq L} b_\ell,$
and denote the condition number of $S$ by
$\cond(S)=\left \{\lambda_{\max}(S^* S)/\lambda_{\min}(S^*S)\right \}^{1/2}$,
where $S^*$ is the conjugate transpose of $S$.

The following proposition, which follows from \eqref{logdet1}, \eqref{logdet} and upper bounds of $\kappa$ and $\xi$ under Assumptions \ref{assum:explosive}, \ref{assum:stable1} or \ref{assum:stable2}, is proved in $\S$ \ref{sec:S3} of the supplementary material.

\begin{proposition}\label{prop_G}
	For any $A_*\in\mathbb{R}^{d\times d}$, under Assumption \ref{assum:explosive}, we have $$\log \det(\overline{\Gamma}_R\underline{\Gamma}_{R}^{-1}) \lesssim m \left[\log \{d\cond(S)/\delta\}+ b_{\max} \log n\right]$$ for $\overline{\Gamma}_R=\overline{\Gamma}_R^{(1)}$ or $\overline{\Gamma}_R^{(2)}$. Moreover, if Assumption \ref{assum:stable1} holds, then  $\log \det(\overline{\Gamma}_R^{(1)}\underline{\Gamma}_{R}^{-1}) \lesssim m \log (m/\delta)$. Furthermore,  if Assumption \ref{assum:stable2} holds and  $n  \gtrsim m +\log(d/\delta)$, then  $\log \det(\overline{\Gamma}_R^{(2)}\underline{\Gamma}_{R}^{-1}) \lesssim m$. 
\end{proposition}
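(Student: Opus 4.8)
The plan is to assemble the statement from the two displayed reductions \eqref{logdet1}--\eqref{logdet} together with explicit controls on $\kappa$ and $\xi$ coming from the Jordan decomposition \eqref{eq:jordan}. By \eqref{logdet1} it suffices to bound $\log\det\{\overline{\Gamma}_R(\sigma^2 R^\T R)^{-1}(4C_0)^2\}$, and \eqref{logdet} then reduces the task to bounding $\kappa$ alone when $\overline{\Gamma}_R=\overline{\Gamma}_{R}^{(1)}$, and both $\kappa$ and $\xi$ when $\overline{\Gamma}_R=\overline{\Gamma}_{R}^{(2)}$. The first step is to reduce $\kappa$ to a scalar quantity: writing $P=R^\T R\succ0$ and using $I_d\preceq I_d\otimes\Gamma_n\preceq\lambda_{\max}(\Gamma_n)I_{d^2}$ (recall $\Gamma_n\succeq I_d$), a generalized-eigenvalue argument gives $I_m\preceq P^{-1/2}R^\T(I_d\otimes\Gamma_n)RP^{-1/2}\preceq\lambda_{\max}(\Gamma_n)I_m$, so from \eqref{kappa} we get $0\le\kappa\le m\log\lambda_{\max}(\Gamma_n)$. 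Everything then comes down to bounding $\lambda_{\max}(\Gamma_n)$ and, for the $\overline{\Gamma}_{R}^{(2)}$ route, $\lVert\Sigma_X\rVert_2$ (which enters $\xi$); these are exactly the contents of Lemmas \ref{lem_Gamma_n}, \ref{lem_kappa}, \ref{lem_Sigma_X} and \ref{lem_xi}, which I would invoke rather than re-derive.

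Under Assumption \ref{assum:explosive} the key point is that the spectral growth over the horizon is controlled: via \eqref{eq:jordan}, $\lVert A_*^s\rVert_2\le\cond(S)\lVert J^s\rVert_2\lesssim\cond(S)\,b_{\max}\,s^{\,b_{\max}-1}\rho(A_*)^s$, and $\rho(A_*)\le1+c/n$ forces $\rho(A_*)^s\le(1+c/n)^n\le e^c$ for all $s\le n$, so summing over $s<n$ yields $\lambda_{\max}(\Gamma_n)\le\sum_{s=0}^{n-1}\lVert A_*^s\rVert_2^2\lesssim\cond(S)^2 n^{2b_{\max}-1}$ and hence $\kappa\lesssim m\{\log\cond(S)+b_{\max}\log n\}$. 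For $\xi$, the $(t,s)$ block of $\Sigma_X$ has norm $\sigma^2\lVert A_*^{|t-s|}\Gamma_{\min(t,s)}\rVert_2\lesssim\sigma^2\cond(S)^3 n^{O(b_{\max})}$, and bounding the operator norm of an $n\times n$ block matrix by $n$ times its largest block norm gives $\lVert\Sigma_X\rVert_2/\sigma^2\lesssim\cond(S)^3 n^{O(b_{\max})}$; since $\psi(m,d,\delta)\lesssim m+\log(d/\delta)$, formula \eqref{xi} shows $\xi$ grows at most polynomially in $n$, $\cond(S)$ and $m+\log(d/\delta)$, so $\log\{2\max(1,\xi)\}\lesssim b_{\max}\log n+\log\cond(S)+\log(d/\delta)$ (using $\log m\lesssim\log d$ since $m\le d^2$ and $\log\log(d/\delta)\lesssim\log(d/\delta)$). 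Plugging these into \eqref{logdet} and merging $m\log(m/\delta)\lesssim m\log(d/\delta)$ with the $\log\cond(S)$ contribution yields $\log\det(\overline{\Gamma}_R\underline{\Gamma}_{R}^{-1})\lesssim m[\log\{d\cond(S)/\delta\}+b_{\max}\log n]$ for both choices of $\overline{\Gamma}_R$.

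For the stable cases, under Assumption \ref{assum:stable1} we have $\Gamma_n\preceq\Gamma_\infty<\infty$ with $\lambda_{\max}(\Gamma_\infty)$ free of $n$ (Remark \ref{remark:kappa}), so $\kappa\lesssim m$ and \eqref{logdet} gives $\log\det(\overline{\Gamma}_{R}^{(1)}\underline{\Gamma}_{R}^{-1})\lesssim m\log(m/\delta)+m\lesssim m\log(m/\delta)$. Under Assumption \ref{assum:stable2} the additional conditions $\mu_{\min}(\mathcal{A})\ge\mu_1$ and $\lVert A_*^t\rVert_2\le C\varrho^t$ are precisely what is needed to bound $\lVert\Sigma_X\rVert_2$ by a constant free of $n$ (Lemma \ref{lem_Sigma_X}), so \eqref{xi} yields $\xi\lesssim\{(m+\log(d/\delta))/n\}^{1/2}+(m+\log(d/\delta))/n\lesssim1$ as soon as $n\gtrsim m+\log(d/\delta)$; hence $m\log\{2\max(1,\xi)\}\lesssim m$, and with $\kappa\lesssim m$ again, \eqref{logdet} gives $\log\det(\overline{\Gamma}_{R}^{(2)}\underline{\Gamma}_{R}^{-1})\lesssim m$.

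The technical heart, and the main obstacle, is the explosive regime of Step two: controlling $\lambda_{\max}(\Gamma_n)$ and especially $\lVert\Sigma_X\rVert_2$ through Jordan blocks so that their growth in $n$ is only polynomial with exponent governed by $b_{\max}$, and recognizing that the growing \emph{off-diagonal} blocks of $\Sigma_X$ are what force the stronger Assumption \ref{assum:stable2} (rather than \ref{assum:stable1}) along the $\overline{\Gamma}_{R}^{(2)}$ route. Those estimates are exactly Lemmas \ref{lem_Gamma_n}, \ref{lem_kappa}, \ref{lem_Sigma_X} and \ref{lem_xi}; granting them, the present argument is only substitution into \eqref{logdet1}--\eqref{logdet} and routine simplification of logarithms.
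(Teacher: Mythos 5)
Your proposal is correct and follows essentially the same route as the paper: reduce via \eqref{logdet1}--\eqref{logdet} to bounding $\kappa$ and $\xi$, bound $\kappa\leq m\log\lambda_{\max}(\Gamma_n)$ with $\lambda_{\max}(\Gamma_n)$ controlled through the Jordan decomposition under Assumption \ref{assum:explosive} (and by a constant under Assumptions \ref{assum:stable1}/\ref{assum:stable2}), and control $\xi$ through $\lVert\Sigma_X\rVert_2$ exactly as in Lemmas \ref{lem_kappa}--\ref{lem_Sigma_X}. The only cosmetic deviations — bounding $\lVert\Sigma_X\rVert_2$ by $n$ times the largest block norm rather than by $\tr(\Sigma_X)\leq dn\sigma^2\lambda_{\max}(\Gamma_n)$, and dropping a $b_{\max}^2$ factor in $\lambda_{\max}(\Gamma_n)$ — are harmless since only polynomial growth in $n$ matters and $\log b_{\max}\leq\log d$ is absorbed in the stated bound.
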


The condition $n  \gtrsim m +\log(d/\delta)$ in Proposition \ref{prop_G} is not stringent, because it is necessary for condition \eqref{thm1_condition} in Theorem \ref{thm1}, where $q=d$ for the $\VAR(1)$ model. By Proposition \ref{prop_G},  $\overline{\Gamma}_{R}^{(2)}$ yields a sharper upper bound of $\log \det(\overline{\Gamma}_R\underline{\Gamma}_{R}^{-1})$ than does $\overline{\Gamma}_{R}^{(1)}$  only under Assumption \ref{assum:stable2}. Thus, we shall always set $\overline{\Gamma}_{R}=\overline{\Gamma}_{R}^{(1)}$ unless Assumption \ref{assum:stable2} holds and  $\{\eta_t\}$ are normal.
As a result, under Assumption \ref{assum:var}, the feasible region of $k$ that is sufficient for condition \eqref{thm1_condition} in Theorem \ref{thm1} is
\begin{equation}\label{eq:k_sufficient}
k \lesssim \begin{cases}
\frac{n}{m\left [\log\{d\cond(S)/\delta\}+ b_{\max} \log n \right ]}, & \text{if Assumption \ref{assum:explosive} holds}\\
\frac{n}{m\log(m/\delta)+\log d}, & \text{if Assumption \ref{assum:stable1} holds}\\
\frac{n}{m+\log(d/\delta)}, & \text{if Assumption \ref{assum:stable2} holds and $\{\eta_t\}$ are normal}
\end{cases}.
\end{equation}

Because the upper bound of $\log \det(\overline{\Gamma}_R\underline{\Gamma}_{R}^{-1})$ and the feasible region of $k$ are both dependent on the assumption on $A_*$ and whether  $\{\eta_t\}$ are normal, the resulting estimation error bounds will vary slightly  under different conditions; see Theorems \ref{thm2} and \ref{thm3} in $\S$ \ref{subsec:var_ub}.

\subsection{Analysis of upper bounds in vector autoregression\label{subsec:var_ub}}

We focus on the upper bound analysis of $\lVert \widehat{\beta} - \beta_* \rVert$; nevertheless, from Proposition \ref{prop1} we can readily obtain analogous results for $\lVert \widehat{A} - A_* \rVert_2$,  which are omitted here.  For simplicity, denote 
\begin{equation*}
\Gamma_{R,k}=R \left \{R^\T (I_d\otimes \Gamma_k) R\right \}^{-1} R^\T.
\end{equation*}

The first theorem follows directly from Theorem \ref{thm1}, Lemmas \ref{lem_bmsb}--\ref{lem_gram} and Proposition \ref{prop_G}.
\begin{theorem}\label{thm2}
	Let $\{X_t\}_{t=1}^{n+1}$ be generated by the linearly restricted vector autoregressive model. Fix $\delta\in(0,1)$. For any $1\leq k \leq \lfloor n/2 \rfloor$ satisfying \eqref{eq:k_sufficient}, under Assumption \ref{assum:var}, we have the following results: (i) If Assumption \ref{assum:explosive} holds, with probability at least $1-3\delta$,
	\begin{equation*}
	\lVert \widehat{\beta} - \beta_* \rVert \lesssim \left (\lambda_{\max}(\Gamma_{R,k})\frac{ m \left[\log \{d\cond(S)/\delta\}+ b_{\max} \log n\right]}{n} \right )^{1/2}.
	\end{equation*}
	(ii) If Assumption \ref{assum:stable1} holds, with probability at least $1-3\delta$,
	\[
	\lVert \widehat{\beta} - \beta_* \rVert \lesssim \left \{\lambda_{\max}(\Gamma_{R,k})\frac{m \log(m/\delta)}{n}\right \}^{1/2}.
	\]
	(iii) If Assumption \ref{assum:stable2} holds and $\{\eta_t\}$ are normal,  with probability at least $1-3\delta$,
	\[
	\lVert \widehat{\beta} - \beta_* \rVert \lesssim \left \{\lambda_{\max}(\Gamma_{R,k})\frac{m +\log(1/\delta)}{n}\right \}^{1/2}.
	\]		
\end{theorem}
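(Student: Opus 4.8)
The plan is to specialise Theorem~\ref{thm1} to the \VAR(1) model \eqref{var1_true} using the matrices $\underline{\Gamma}_R$ and $\overline{\Gamma}_R$ produced in \S\ref{subsec:verify}. Fix an arbitrary integer $k$ with $1\le k\le\lfloor n/2\rfloor$ satisfying \eqref{eq:k_sufficient}, take $q=d$, block length $2k$, $\alpha=1/10$, and $\Gamma_{\smb}=\sigma^2\Gamma_k/(4C_0)^2$, so that $\underline{\Gamma}_R$ is as in \eqref{Gamma_under_R}. First I would verify Assumptions~\ref{assum:bmsb}--\ref{assum:error_subG}: Assumption~\ref{assum:bmsb} is exactly Lemma~\ref{lem_bmsb}; Assumption~\ref{assum:upper_matrix} holds with $\overline{\Gamma}_R=\overline{\Gamma}_R^{(1)}$ by Lemma~\ref{lem_gram1}, and additionally with $\overline{\Gamma}_R=\overline{\Gamma}_R^{(2)}$ by Lemma~\ref{lem_gram} when $\{\eta_t\}$ are normal; and Assumption~\ref{assum:error_subG} follows from Assumptions~\ref{assum:var}(ii) and (iv). It remains to check the hypothesis $0\prec\Gamma_{\smb}\preceq\overline{\Gamma}$ of Theorem~\ref{thm1}; since a density bound may always be enlarged, we may assume $C_0\ge1$, and then $0\prec I_d\preceq\Gamma_k\preceq\Gamma_n$ (from \S\ref{subsec:verify}) gives $\Gamma_{\smb}=\sigma^2\Gamma_k/(4C_0)^2\preceq\sigma^2\Gamma_n\preceq\overline{\Gamma}$ for either choice of $\overline{\Gamma}$.

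Next I would evaluate the bound of Theorem~\ref{thm1}. From \eqref{Gamma_under_R}, $\underline{\Gamma}_R=\{\sigma^2/(4C_0)^2\}R^\T(I_d\otimes\Gamma_k)R$, hence $R\underline{\Gamma}_R^{-1}R^\T=\{(4C_0)^2/\sigma^2\}\Gamma_{R,k}$ and $\lambda_{\max}(R\underline{\Gamma}_R^{-1}R^\T)=\{(4C_0)^2/\sigma^2\}\lambda_{\max}(\Gamma_{R,k})$, so the dependence on $\sigma$ cancels the leading $\sigma$ in Theorem~\ref{thm1}. Absorbing $C_0$, $\alpha=1/10$ and the numerical constants $12\log(14/\alpha)$, $9$, $6$ into $\gtrsim$, Theorem~\ref{thm1} yields, with probability at least $1-3\delta$,
\[
\lVert\widehat{\beta}-\beta_*\rVert \;\lesssim\; \left[\lambda_{\max}(\Gamma_{R,k})\,\frac{m+\log\det(\overline{\Gamma}_R\underline{\Gamma}_R^{-1})+\log(1/\delta)}{n}\right]^{1/2}.
\]
Thus the whole problem reduces to bounding $\log\det(\overline{\Gamma}_R\underline{\Gamma}_R^{-1})$ above and to checking that \eqref{thm1_condition} (with $q=d$) is guaranteed by \eqref{eq:k_sufficient}.

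Both tasks are handled regime by regime via Proposition~\ref{prop_G}. Under Assumption~\ref{assum:explosive} with $\overline{\Gamma}_R=\overline{\Gamma}_R^{(1)}$, Proposition~\ref{prop_G} gives $\log\det(\overline{\Gamma}_R\underline{\Gamma}_R^{-1})\lesssim m[\log\{d\cond(S)/\delta\}+b_{\max}\log n]$; this term dominates $m$ and $\log(1/\delta)$ in the numerator, yielding part (i), and it simultaneously makes the right-hand side of \eqref{thm1_condition} of order $km[\log\{d\cond(S)/\delta\}+b_{\max}\log n]$, so that \eqref{thm1_condition} is implied by the first line of \eqref{eq:k_sufficient}. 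Parts (ii) and (iii) are identical in structure: under Assumption~\ref{assum:stable1} one uses $\log\det(\overline{\Gamma}_R^{(1)}\underline{\Gamma}_R^{-1})\lesssim m\log(m/\delta)$, which dominates $m+\log(1/\delta)$ and turns \eqref{thm1_condition} into the second line of \eqref{eq:k_sufficient}; under Assumption~\ref{assum:stable2} with normal innovations one uses $\log\det(\overline{\Gamma}_R^{(2)}\underline{\Gamma}_R^{-1})\lesssim m$, so the numerator collapses to $m+\log(1/\delta)$ and \eqref{thm1_condition} becomes the third line of \eqref{eq:k_sufficient} --- here the auxiliary requirement $n\gtrsim m+\log(d/\delta)$ needed by Proposition~\ref{prop_G} is automatic, since the existence of an integer $k\ge1$ in that line already forces it.

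I expect the proof to be essentially bookkeeping rather than mathematics, with the real content already packaged in Theorem~\ref{thm1}, Proposition~\ref{prop_G} and Lemmas~\ref{lem_bmsb}--\ref{lem_gram}. The points that need care are: aligning the block length $2k$ of Lemma~\ref{lem_bmsb} with the generic block length of Theorem~\ref{thm1}, so that the right-hand side of \eqref{thm1_condition} scales like $k$ times the bracketed sum and reproduces \eqref{eq:k_sufficient}; tracking which of $\overline{\Gamma}_R^{(1)}$ and $\overline{\Gamma}_R^{(2)}$ is admissible and sharper in each regime (only $\overline{\Gamma}_R^{(1)}$ without normality, with $\overline{\Gamma}_R^{(2)}$ preferable only under Assumption~\ref{assum:stable2}); and, in each case, confirming that the single quantity $\log\det(\overline{\Gamma}_R\underline{\Gamma}_R^{-1})$ simultaneously dominates the lower-order terms $m$ and $\log(1/\delta)$ in the error numerator and recovers the stated feasible range of $k$.
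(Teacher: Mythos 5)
Your proposal is correct and follows essentially the same route as the paper, which itself treats Theorem \ref{thm2} as a direct consequence of Theorem \ref{thm1}, Lemmas \ref{lem_bmsb}--\ref{lem_gram} and Proposition \ref{prop_G}: verify Assumptions \ref{assum:bmsb}--\ref{assum:error_subG} with $\underline{\Gamma}_R$ as in \eqref{Gamma_under_R} and $\overline{\Gamma}_R\in\{\overline{\Gamma}_R^{(1)},\overline{\Gamma}_R^{(2)}\}$, cancel $\sigma$ through $\lambda_{\max}(R\underline{\Gamma}_R^{-1}R^\T)\propto\lambda_{\max}(\Gamma_{R,k})/\sigma^2$, and then bound $\log\det(\overline{\Gamma}_R\underline{\Gamma}_R^{-1})$ and check \eqref{thm1_condition} regime by regime via Proposition \ref{prop_G} and \eqref{eq:k_sufficient}. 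Your bookkeeping of the block length $2k$, the choice of $\overline{\Gamma}_R$ in each regime, and the absorption of the lower-order terms $m$ and $\log(1/\delta)$ matches what the paper intends, so no gap remains.
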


To gain an intuitive understanding of the factor $\lambda_{\max}(\Gamma_{R,k})$ in Theorem \ref{thm2}, consider the asymptotic distribution of $\widehat{\beta}$ under the assumptions that $\rho(A_*)<1$  and that $d, m$ and $A_*$ are all fixed: 
\begin{equation}\label{eq:limit_dist}
n^{1/2}(\widehat{\beta} - \beta_*)\rightarrow N\big(0, \underbrace{R \{R^\T (I_d\otimes \Gamma_\infty) R\}^{-1} R^\T}_{\Gamma_{R,\infty}}\big)
\end{equation}
in distribution as $n\rightarrow\infty$, where $\Gamma_{\infty}=\lim_{k\rightarrow\infty}\Gamma_{k}$; see  \cite{Lutkepohl2005}. Thus, $\lambda_{\max}(\Gamma_{R,k})$ in Theorem \ref{thm2} resembles the limiting covariance matrix $\Gamma_{R,\infty}$. However, it is noteworthy that  by adopting a non-asymptotic approach, Theorem \ref{thm2} retains  the dependence of the estimation error on $\Gamma_{R,k}$ across stable, unstable and slightly explosive regimes.  Also note that, similarly to \eqref{eq:limit_dist}, the error bounds in Theorem \ref{thm2} are free of $\sigma^2$, as the scaling effect of $\sigma^2$ on $\eta_t$ is canceled out by that on $X_t$ due to the autoregressive structure. 

As a special case, $b_{\max}=1$ if $A_*$ is diagonalizable. Moreover, if $A_*=\rho I_d$, then $b_{\max}=1$, $\cond(S)=1$, $\Gamma_k=\gamma_k(\rho) I_d$, with $\gamma_k(\rho)=\sum_{s=0}^{k-1}\rho^{2s}$, and thus
\begin{equation}\label{eq:lambda_max_equal}
\lambda_{\max}(\Gamma_{R,k})=\gamma_k^{-1}(\rho) \lambda_{\max}\{R (R^\T R)^{-1} R^\T\}=\gamma_k^{-1}(\rho) \lambda_{\max}\{(R^\T R)^{-1} R^\T R\}=\gamma_k^{-1}(\rho),
\end{equation}
where the second equality is due to the fact that, for any matrices $A\in\mathbb{R}^{N\times m}$ and $B\in\mathbb{R}^{m\times N}$, $AB$ and $BA$ have the same nonzero eigenvalues \citep[Theorem 1.3.20,][]{Horn_Johnson1985}. 

By Theorem \ref{thm2}, The linear restrictions affect the error bounds through both the factor $\lambda_{\max}(\Gamma_{R,k})$ and the explicit rate function of $m$ and $n$. To further illustrate this, suppose that 
\[\beta_*=R\theta_*+\gamma=R^{(1)}R^{(2)}\theta_*+\gamma,\] 
where $R^{(1)}\in\mathbb{R}^{d^2\times\widetilde{m}}$ has rank $\widetilde{m}$, and $R^{(2)}\in\mathbb{R}^{\widetilde{m}\times m}$ has rank $m$, with $\widetilde{m}\geq m+1$. Then $\mathcal{L}^{(1)}=\{R^{(1)}\theta+\gamma:\theta\in\mathbb{R}^{\widetilde{m}}\}\supseteq\mathcal{L} =\{R\theta+\gamma: \theta\in\mathbb{R}^m\}$. By an argument similar to that in \citet[p.~199]{Lutkepohl2005}, we can show that $\Gamma_{R,k}\preceq \Gamma_{R^{(1)},k}$, so
\begin{equation}\label{eq:lambda_max}
\lambda_{\max}(\Gamma_{R,k}) \leq \lambda_{\max}(\Gamma_{R^{(1)},k}). 
\end{equation}
Note that the parameter space $\mathcal{L}^{(1)}$ has fewer restrictions than $\mathcal{L}$. Therefore, with fewer restrictions,  the effective model size will increase from $m$ to $\widetilde{m}$, and meanwhile $\lambda_{\max}(\Gamma_{R,k})$ will increase to $\lambda_{\max}(\Gamma_{R^{(1)},k})$, both leading to deterioration of the error bound. 

\begin{remark}
	The preservation of the factor $\lambda_{\max}(\Gamma_{R,k})$ in Theorem \ref{thm2} is achieved by bounding $Z^\T Z$ and $Z^\T \eta$ simultaneously through the Moore-Penrose pseudoinverse $Z^\dagger$, and note that  $Z^\dagger=(Z^\T Z)^{-1}Z^\T$ if $Z^\T Z\succ 0$; see
	also \cite{Simchowitz2018}.  This key advantage is not enjoyed by the non-asymptotic analyses in \cite{Basu_Michailidis2015} and \cite{Faradonbeh_Tewari_Michailidis2018}.  In their analyses,  $X^\T X$ and $X^\T E$, or  $Z^\T Z$ and $Z^\T \eta$ in our context, were bounded separately. This would not only break down $\Gamma_{R,k}$, but also cause degradation of the error bound as $\rho(A_*)\rightarrow1$ due to the inevitable involvement of the condition number of $X^\T X$ in the resulting error bound. 
\end{remark}

\begin{remark}
	If $A_*=\rho I_d$, then \eqref{eq:lambda_max} becomes an equality. However, the equality generally does not hold even for diagonal matrices $A_*$. For example, if $A_*=\diag(\rho_1,\rho_2)\in\mathbb{R}^{2\times 2}$, where $\vert\rho_1\vert>\vert\rho_2\vert$, then $\Gamma_k=\diag\{\gamma_k(\rho_1), \gamma_k(\rho_2)\}$. Let $R=(1,0,0,0)^\T=R^{(1)}R^{(2)}$, where $R^{(1)}=(I_2,0)^\T\in\mathbb{R}^{4\times 2}$ and $R^{(2)}=(1,0)^\T$. Consequently, \eqref{eq:lambda_max} is a strict inequality:
	$\lambda_{\max}(\Gamma_{R,k}) = \gamma_k^{-1}(\rho_1) < \gamma_k^{-1}(\rho_2)=\lambda_{\max}(\Gamma_{R^{(1)},k})$.
\end{remark}

The next theorem sharpens the error bounds in Theorem \ref{thm2} by utilizing the largest possible $k$, since $\lambda_{\max}(\Gamma_{R,k})$ is monotonic decreasing in $k$. The dependence of $\lambda_{\max}(\Gamma_{R,k})$ on $A_*$ will be captured by $\sigma_{\min}(A_*)$, a measure of the least excitable mode of the underlying dynamics.

\begin{theorem}\label{thm3}
	Suppose that the conditions of Theorem \ref{thm2} hold. Fix $\delta\in(0,1)$, and let $c_1>0$ be a universal constant.
	
	(i) Under Assumption \ref{assum:explosive}, if
	\begin{equation}\label{eq:sigmin_slow1}
	\sigma_{\min}(A_*)\leq 1-\frac{c_1m\left [\log\{d\cond(S)/\delta\}+ b_{\max} \log n \right ]}{n},\tag{A.1}
	\end{equation}	
	then, with probability at least $1-3\delta$, 
	\begin{equation}\label{eq:slow1}
	\lVert \widehat{\beta} - \beta_* \rVert \lesssim \left (\frac{\{1-\sigma_{\min}^2(A_*)\}  m \left[\log \{d\cond(S)/\delta\}+ b_{\max} \log n\right]  }{n}\right )^{1/2}, \tag{S.1}
	\end{equation}	
	and if inequality \eqref{eq:sigmin_slow1} holds in the reverse direction, then, with probability at least $1-3\delta$, 
	\begin{equation}\label{eq:fast1}
	\lVert \widehat{\beta} - \beta_* \rVert  \lesssim  \frac{m \left[\log \{d\cond(S)/\delta\}+ b_{\max} \log n\right]}{n}. \tag{F.1}
	\end{equation}
	
	(ii) Under Assumption \ref{assum:stable1}, if
	\begin{equation}\label{eq:sigmin_slow2}
	\sigma_{\min}(A_*)\leq 1-\frac{c_1\{m\log(m/\delta)+\log d\}}{n},\tag{A.2}
	\end{equation}
	then, with probability at least $1-3\delta$, 
	\begin{equation}\label{eq:slow2}
	\lVert \widehat{\beta} - \beta_* \rVert \lesssim  \left [\frac{\{1-\sigma_{\min}^2(A_*)\}m \log(m/\delta)}{n}\right ]^{1/2}, \tag{S.2}
	\end{equation}
	and if inequality \eqref{eq:sigmin_slow2} holds in the reverse direction, then, with probability at least $1-3\delta$, 
	\begin{equation}\label{eq:fast2}
	\lVert \widehat{\beta} - \beta_* \rVert \lesssim \frac{m \log(m/\delta)+\log d}{n}. \tag{F.2}
	\end{equation}	
	
	(iii) Under Assumption \ref{assum:stable2}, if $\{\eta_t\}$ are normal and
	\begin{equation}\label{eq:sigmin_slow3}
	\sigma_{\min}(A_*)\leq 1-\frac{c_1\{m+\log(d/\delta)\}}{n},\tag{A.3}
	\end{equation}
	then, with probability at least $1-3\delta$, 
	\begin{equation}\label{eq:slow3}
	\lVert \widehat{\beta} - \beta_* \rVert \lesssim \left [\frac{ \{1-\sigma_{\min}^2(A_*)\} \{m +\log(1/\delta)\}}{n}\right ]^{1/2}. \tag{S.3}
	\end{equation}	
	and if inequality \eqref{eq:sigmin_slow3} holds in the reverse direction, then, with probability at least $1-3\delta$, 
	\begin{equation}\label{eq:fast3}
	\lVert \widehat{\beta} - \beta_* \rVert \lesssim \frac{m +\log(d/\delta)}{n}. \tag{F.3}
	\end{equation}			
\end{theorem}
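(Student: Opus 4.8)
The plan is to sharpen Theorem~\ref{thm2} by inserting the largest admissible block size into its bound --- legitimate since $\lambda_{\max}(\Gamma_{R,k})$ is decreasing in $k$ --- and then to rewrite the resulting scale factor in terms of $\sigma_{\min}(A_*)$. The crucial deterministic estimate is $\lambda_{\max}(\Gamma_{R,k}) \le \gamma_k\{\sigma_{\min}(A_*)\}^{-1}$, where $\gamma_k(\rho)=\sum_{s=0}^{k-1}\rho^{2s}$. To obtain it, observe that $A_*^s(A_*^\T)^s \succeq \sigma_{\min}(A_*^s)^2 I_d \succeq \sigma_{\min}(A_*)^{2s} I_d$ by submultiplicativity of the smallest singular value, so $\Gamma_k \succeq \gamma_k\{\sigma_{\min}(A_*)\} I_d$ and hence $R^\T(I_d\otimes\Gamma_k)R \succeq \gamma_k\{\sigma_{\min}(A_*)\}\,R^\T R$; inverting this matrix inequality (valid because $R$ has full column rank) and conjugating by $R$ gives $\Gamma_{R,k} \preceq \gamma_k\{\sigma_{\min}(A_*)\}^{-1} R(R^\T R)^{-1}R^\T$, whose largest eigenvalue is at most one since $R(R^\T R)^{-1}R^\T$ is an orthogonal projection.

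I would then take $k=k^*$, the largest integer not exceeding $\lfloor n/2\rfloor$ that lies in the feasible region \eqref{eq:k_sufficient}, so that $k^*\asymp n/\varrho_n$, where $\varrho_n$ denotes the dimensional--logarithmic factor relevant to the case considered: $m[\log\{d\cond(S)/\delta\}+b_{\max}\log n]$ in part (i), $m\log(m/\delta)+\log d$ in part (ii), and $m+\log(d/\delta)$ in part (iii). The phase transition is then read off from two elementary estimates of $\gamma_{k^*}(\rho)$ at $\rho=\sigma_{\min}(A_*)$. If the threshold inequality (say \eqref{eq:sigmin_slow1}) holds, then $\rho\le 1-c_1\varrho_n/n\lesssim 1-1/k^*$, so $\rho^{2k^*}$ is bounded above by a universal constant strictly less than one, using $(1-u)^{1/u}\le e^{-1}$ and choosing $c_1$ compatibly with the constant implicit in \eqref{eq:k_sufficient}; hence $\gamma_{k^*}(\rho)=(1-\rho^{2k^*})/(1-\rho^2)\gtrsim(1-\rho^2)^{-1}$, so $\lambda_{\max}(\Gamma_{R,k^*})\lesssim 1-\sigma_{\min}^2(A_*)$, and Theorem~\ref{thm2} delivers the slow rates \eqref{eq:slow1}, \eqref{eq:slow2}, \eqref{eq:slow3}. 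If instead the reverse inequality holds, then either $\sigma_{\min}(A_*)\ge1$, whence $\gamma_{k^*}(\rho)\ge k^*$ trivially, or $1-c_1\varrho_n/n<\rho<1$, whence every term $\rho^{2s}$ with $s\le k^*-1$ exceeds a positive universal constant, so again $\gamma_{k^*}(\rho)\gtrsim k^*\asymp n/\varrho_n$; thus $\lambda_{\max}(\Gamma_{R,k^*})\lesssim\varrho_n/n$, and Theorem~\ref{thm2} gives $\lVert\widehat\beta-\beta_*\rVert\lesssim(\varrho_n/n)^{1/2}(\varrho_n/n)^{1/2}=\varrho_n/n$, i.e.\ \eqref{eq:fast1}, \eqref{eq:fast2}, \eqref{eq:fast3}, where any gap between $\varrho_n$ and the slightly smaller factor appearing in the Theorem~\ref{thm2} bound (e.g.\ $\log d$ in part (ii)) is absorbed into the universal constant since $\varrho_n$ dominates.

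All three parts are then identical up to the substitution of $\varrho_n$ and of the invoked case of Theorem~\ref{thm2}. The main obstacle I foresee is quantitative rather than conceptual: calibrating the universal constant $c_1$ in the thresholds \eqref{eq:sigmin_slow1}--\eqref{eq:sigmin_slow3} against the universal constant hidden in \eqref{eq:k_sufficient} so that $(1-c_1\varrho_n/n)^{k^*}$ is simultaneously bounded away from one when the threshold holds and away from zero when it fails --- an exercise in the inequality $(1-u)^{1/u}\le e^{-1}$ and a matching lower bound for $u$ bounded away from one. A minor caveat is that Assumption~\ref{assum:explosive} only gives $\sigma_{\min}(A_*)\le\rho(A_*)\le 1+c/n$, so $1-\sigma_{\min}^2(A_*)$ may be slightly negative; but in that event the reverse threshold inequality automatically holds, placing us in the fast regime where no slow-rate bound is claimed.
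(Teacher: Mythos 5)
Your proposal is correct and follows essentially the same route as the paper's proof: bound $\lambda_{\max}(\Gamma_{R,k})$ by $\{\sum_{s=0}^{k-1}\sigma_{\min}^{2s}(A_*)\}^{-1}$, plug in the largest feasible $k\asymp n/\varrho_n$ from \eqref{eq:k_sufficient}, and split the two regimes by elementary estimates of $(1-c_2/k)^{2k}$, with the same constant-calibration caveat the paper also handles. Your derivation of the key matrix bound via $\Gamma_k\succeq\gamma_k\{\sigma_{\min}(A_*)\}I_d$ and the projection $R(R^\T R)^{-1}R^\T$ is a minor (equally valid) variant of the paper's step $\Gamma_{R,k}\preceq I_d\otimes\Gamma_k^{-1}$ followed by $\lambda_{\min}(\Gamma_k)\geq\sum_{s=0}^{k-1}\sigma_{\min}^{2s}(A_*)$, and the endpoint is identical.
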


Theorem \ref{thm3} reveals an interesting phenomenon of phase transition  from slow to fast error rate regimes, i.e., from about $O\{(m/n)^{1/2}\}$ as in \eqref{eq:slow1}--\eqref{eq:slow3} to about $O(m/n)$ as in \eqref{eq:fast1}--\eqref{eq:fast3}, up to logarithmic factors.  Within the slow rate regime, the estimation error decreases as  $\sigma_{\min}(A_*)$ increases. Note that the slow rates in \eqref{eq:slow1}--\eqref{eq:slow3} differ from each other only by logarithmic factors, and so do the fast rates in \eqref{eq:fast1}--\eqref{eq:fast3}.  Moreover, the point at which the transition occurs is dependent on $\sigma_{\min}(A_*)$ instead of $\rho(A_*)$; see conditions \eqref{eq:sigmin_slow1}--\eqref{eq:sigmin_slow3}. Since $\sigma_{\min}(A_*)\leq \rho(A_*)$, conditions \eqref{eq:sigmin_slow1}--\eqref{eq:sigmin_slow3} may be mild as long as $\rho(A_*)$ is not too large. However, the fast rates require the opposite of \eqref{eq:sigmin_slow1}--\eqref{eq:sigmin_slow3},  which cannot be directly inferred from $\rho(A_*)$.

\begin{remark}\label{remark:phases}
	For the special case of $A_*=\rho I_d$ with $\rho\in\mathbb{R}$, Assumptions \ref{assum:stable1} and \ref{assum:stable2} both simply reduce to $\vert\rho\vert<1$, and Assumption \ref{assum:explosive} to $\vert\rho\vert\leq 1+O(1/n)$. Also, $\cond(S)=1$ and $b_{\max}=1$. Thus, under Assumption \ref{assum:var}, by \eqref{eq:fast1}, \eqref{eq:slow2} and \eqref{eq:fast2}, the following holds with high probability:
	\[
	\lVert \widehat{\beta} - \beta_* \rVert \lesssim \begin{cases}
	O[\{(1-\rho^2)m\log m/n\}^{1/2}], & \text{if} \quad \vert\rho\vert\leq 1-O\{(m\log m+\log d)/n\}\\
	O\{(m\log m+\log d)/n\},  & \text{if} \quad 1-O\{(m\log m+\log d)/n\} \leq \vert\rho\vert <1\\
	O\{m\log(d n)/n\}, & \text{if} \quad 1\leq \vert\rho\vert \leq 1+O(1/n)
	\end{cases}.
	\]	
	Moreover, if $\{\eta_t\}$ are normal, by \eqref{eq:slow3} and \eqref{eq:fast3}, we can eliminate all  factors of $\log m$ in the above results; that is, every $m\log m$ will be replaced by $m$.
\end{remark}

\begin{remark}\label{remark:Michailidis}
	\cite{Faradonbeh_Tewari_Michailidis2018} derived error bounds for the ordinary least squares estimator of explosive unrestricted vector autoregressive processes when (1)  $|\lambda_{\min}(A_*)|>1$ or (2)  $A_*$ has no unit eigenvalue. In contrast to case (1), we focus on slightly explosive processes with $\rho(A_*)\leq 1+O(1/n)$.  Moreover, the no unit root requirement of case (2) may be quite restrictive; e.g., it excludes the case of $|\rho|=1$ in Remark \ref{remark:phases}. Thus, the conditions in Theorem \ref{thm3} may be more reasonable in practice.
\end{remark}

\begin{remark}\label{remark:Guo}
	\cite{Guo_Wang_Yao2016} obtained the error rate $O_p\{(m/n)^{1/2}\}$ for the banded vector autoregressive model under weaker conditions on $\{\eta_t\}$ yet stronger conditions on $A_*$ than those in this paper; see Theorem 2 therein. Particularly, they required $\lVert A_*\rVert_2<1$. Note that this rate matches \eqref{eq:slow3}. In view of the lower bounds to be presented in $\S$\ref{sec:lowerbd}, we  conjecture that the rate in \eqref{eq:slow2} is larger than the actual rate by a factor of  $\log m$. 	
\end{remark}

\section{Analysis of lower bounds \label{sec:lowerbd}}

For any $\theta\in\mathbb{R}^m$, let $\beta=R\theta+\gamma$, and the corresponding transition matrix is denoted by $A(\theta)=(I_d\otimes \theta^\T)\widetilde{R}+G$, where $R$, $\gamma$, $\widetilde{R}$ and $G$ are defined as in $\S$ \ref{subsec:var_example}. As $\beta$ is completely determined by $\theta$, it is more convenient to index the probability law of the  model by the unrestricted parameter $\theta$. Thus, we denote by $\P_{\theta}^{(n)}$ the distribution of the sample $(X_1,\ldots, X_{n+1})$ on  $(\mathcal{X}^{n+1}, \mathcal{F}_{n+1})$, where $\mathcal{X}=\mathbb{R}^{d}$ and $\mathcal{F}_{n+1}=\sigma\{\eta_1,\ldots,\eta_{n}\}$.	For any fixed $\bar{\rho}>0$, we write the subspace of $\theta$ such that the spectral radius of $A(\theta)$ is bounded above by $\bar{\rho}$ as
\[
\Theta(\bar{\rho}) = \{\theta\in \mathbb{R}^m:  \rho \{A(\theta)\} \leq \bar{\rho} \}.
\]
The corresponding linearly restricted subspace of $\beta$ is denoted by
$\mathcal{L}(\bar{\rho})=\{R\theta+\gamma: \theta\in\Theta(\bar{\rho})\}$.

The minimax rate of estimation over $\beta\in \mathcal{L}(\bar{\rho})$, or $\theta\in\Theta(\bar{\rho})$, is provided by the next theorem.

\begin{theorem}\label{thm_lower}
	Suppose that $\{X_t\}_{t=1}^{n+1}$ follow the vector autoregressive model $X_{t+1}=AX_t+\eta_t$ with linear restrictions defined as in $\S$ \ref{sec:VAR_upperbd}. In addition, Assumptions \ref{assum:var}(i) and (ii) hold, and  $\{\eta_t\}$ are normally distributed. Fix  $\delta\in(0,1/4)$ and $\bar{\rho}>0$.  Let $\gamma_n(\bar{\rho})=\sum_{s=0}^{n-1}\bar{\rho}^{2s}$. Then, for any $\epsilon\in(0, \bar{\rho}/4]$, we have
	\[
	\inf_{\widehat{\beta}}\sup_{\theta\in \Theta(\bar{\rho})} \P_{\theta}^{(n)} \left \{ \lVert \widehat{\beta}-\beta\rVert \geq \epsilon \right \} \geq \delta,
	\]
	where the infimum is taken over all estimators of $\beta$ subject to $\beta\in\{R\theta+\gamma: \theta\in\mathbb{R}^m\}$,
	for any $n$ such that 
	\begin{equation*}
	n \gamma_n(\bar{\rho}) \lesssim \frac{m+\log(1/\delta)}{\epsilon^2}.
	\end{equation*}
\end{theorem}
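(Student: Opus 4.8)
The plan is to lower-bound the minimax risk by reducing estimation to hypothesis testing and applying the standard information-theoretic machinery: a two-point (Le Cam) argument to produce the $\log(1/\delta)$ term in the numerator, and a multiple-hypothesis (Fano) argument over a Varshamov--Gilbert packing of the $m$-dimensional parameter space to produce the $m$ term. Since $\{\eta_t\}$ is Gaussian and the recursion is linear, the only model-specific input is the Kullback--Leibler divergence between two VAR laws, which factorizes into one-step conditional Gaussians and can be written explicitly in terms of the controllability Gramians $\Gamma_t$ of \eqref{eq:Gramian}.

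First I would construct the competing hypotheses inside $\Theta(\bar\rho)$. Fix a base point $\theta_0$ for which $A(\theta_0)$ is normal with $\rho\{A(\theta_0)\}\le\bar\rho$ (for instance a scalar matrix $rI_d\in\mathcal{L}$, and in particular $\theta_0=0$ when $\gamma=0$), so that $\lVert A(\theta_0)^s\rVert_2=\rho\{A(\theta_0)\}^s\le\bar\rho^s$ for every $s\ge1$. Since $\beta=R\theta+\gamma$ sweeps out the $m$-dimensional affine set $\mathcal{L}$, a Varshamov--Gilbert packing of $\mathcal{L}$ supplies $\beta_0,\dots,\beta_M$, with $\beta_0$ the center, $\log(M+1)\gtrsim m$, pairwise separation $\lVert\beta_j-\beta_k\rVert\ge2\epsilon$, and radius $\max_j\lVert\beta_j-\beta_0\rVert\lesssim\epsilon$; writing $\theta_j$ for the unique preimage with $R\theta_j+\gamma=\beta_j$ and $\delta A_j=A(\theta_j)-A(\theta_0)=(I_d\otimes(\theta_j-\theta_0)^\T)\widetilde{R}$, one has $\lVert\delta A_j\rVert_2\le\lVert\delta A_j\rVert_F=\lVert\beta_j-\beta_0\rVert\lesssim\epsilon\le\bar\rho/4$. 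Choosing the base point with $\rho\{A(\theta_0)\}$ below $\bar\rho$ by a fixed fraction then gives $\rho\{A(\theta_j)\}\le\lVert A(\theta_0)\rVert_2+\lVert\delta A_j\rVert_2\le\bar\rho$, so each $\theta_j\in\Theta(\bar\rho)$ and $\beta_j\in\mathcal{L}(\bar\rho)$; the two-point configuration is the case $M=1$.

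The crux is the divergence bound. Under $\P_\theta^{(n)}$ the joint density of $(X_1,\dots,X_{n+1})$ factorizes along the transitions $X_t\to X_{t+1}$, each of which is $N\{A(\theta)X_t,\sigma^2 I_d\}$ given $\mathcal{F}_t$; expanding the log-likelihood ratio, and using the independence of $\eta_t$ and $X_t$ together with $E(\eta_t)=0$ and $E_{\theta_0}(X_tX_t^\T)=\sigma^2\Gamma_t(\theta_0)$, where $\Gamma_t(\theta_0)=\sum_{s=0}^{t-1}A(\theta_0)^s(A(\theta_0)^\T)^s$, gives
\begin{equation*}
\mathrm{KL}(\P_{\theta_0}^{(n)}\,\|\,\P_{\theta_j}^{(n)})=\tfrac12\sum_{t=1}^{n}\tr\{\delta A_j^\T\delta A_j\,\Gamma_t(\theta_0)\}.
\end{equation*}
Since $\sum_{t=1}^{n}\Gamma_t(\theta_0)=\sum_{s=0}^{n-1}(n-s)A(\theta_0)^s(A(\theta_0)^\T)^s\preceq n\sum_{s=0}^{n-1}\bar\rho^{2s}I_d=n\gamma_n(\bar\rho)I_d$ by normality of $A(\theta_0)$, it follows that $\mathrm{KL}(\P_{\theta_0}^{(n)}\,\|\,\P_{\theta_j}^{(n)})\le\tfrac12 n\gamma_n(\bar\rho)\lVert\delta A_j\rVert_F^2=\tfrac12 n\gamma_n(\bar\rho)\lVert\beta_j-\beta_0\rVert^2\lesssim n\gamma_n(\bar\rho)\epsilon^2$; the same bound holds with $\theta_0,\theta_j$ exchanged, and likewise $\mathrm{KL}(\P_{\theta_j}^{(n)}\,\|\,\P_{\theta_k}^{(n)})\lesssim n\gamma_n(\bar\rho)\epsilon^2$ for all $j,k$.

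It remains to convert these into the probability statement, which I would do in two regimes that together cover $n\gamma_n(\bar\rho)\epsilon^2\lesssim m+\log(1/\delta)$, since this forces either $n\gamma_n(\bar\rho)\epsilon^2\lesssim\log(1/\delta)$ or $n\gamma_n(\bar\rho)\epsilon^2\lesssim m$ (up to a factor of two). In the first regime, the two-point reduction combined with the Bretagnolle--Huber inequality yields $\inf_{\widehat\beta}\max_{j\in\{0,1\}}\P_{\theta_j}^{(n)}\{\lVert\widehat\beta-\beta_j\rVert\ge\epsilon\}\ge\tfrac12\{1-\mathrm{TV}(\P_{\theta_0}^{(n)},\P_{\theta_1}^{(n)})\}\ge\tfrac14\exp\{-\mathrm{KL}(\P_{\theta_0}^{(n)}\,\|\,\P_{\theta_1}^{(n)})\}\ge\delta$ once the universal constant in ``$\lesssim$'' is taken small enough. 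In the second regime, $\max_{j\ne k}\mathrm{KL}(\P_{\theta_j}^{(n)}\,\|\,\P_{\theta_k}^{(n)})\lesssim m\lesssim\log(M+1)$, and Fano's inequality gives $\inf_{\widehat\beta}\max_{0\le j\le M}\P_{\theta_j}^{(n)}\{\lVert\widehat\beta-\beta_j\rVert\ge\epsilon\}\ge1-\{\max_{j\ne k}\mathrm{KL}+\log2\}/\log(M+1)\ge1/4>\delta$. As all $\theta_j\in\Theta(\bar\rho)$, the asserted infimum--supremum lower bound follows in either case. The main obstacle is the coupled construction of the middle two paragraphs: the hypotheses must stay in $\Theta(\bar\rho)$ (this is exactly where $\epsilon\le\bar\rho/4$ enters), remain $\Omega(\epsilon)$-separated directly inside the $m$-dimensional set $\mathcal{L}$ so that no conditioning factor of $R$ leaks into the bound, and have divergence controlled by $n\gamma_n(\bar\rho)\epsilon^2$, which forces $A(\theta_0)$ to be normal so that $\Gamma_n(\theta_0)\preceq\gamma_n(\bar\rho)I_d$; handling a general shift $\gamma$, where no convenient normal base transition matrix need exist, is the one point requiring extra care.
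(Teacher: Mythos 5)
Your construction and divergence computation coincide with the paper's: pack a ball of radius $O(\epsilon)$ in the image of $R$ (so that separation is measured directly in $\lVert\widehat\beta-\beta\rVert$ and membership in $\Theta(\bar\rho)$ follows from $\rho\{A(\theta)\}\le\lVert A(\theta)\rVert_F=\lVert R\theta\rVert$), and compute the Gaussian KL divergence exactly as $\tfrac12(\theta-\theta_0)^\T R^\T\{I_d\otimes\sum_{t=1}^n\Gamma_t(\theta)\}R(\theta-\theta_0)$, bounded by $32\epsilon^2 n\gamma_n(\bar\rho)$. Two remarks. First, your insistence on a \emph{normal} base matrix $rI_d\in\mathcal{L}$ is both unnecessary and potentially unavailable: a scalar matrix need not lie in an arbitrary restricted class $\mathcal{L}$, but it is also not needed, since every hypothesis in the packing satisfies $\lVert A(\theta)\rVert_2\le\lVert A(\theta)\rVert_F\le\bar\rho$, whence $\lVert A(\theta)^s\rVert_2\le\bar\rho^s$ and $\Gamma_t(\theta)\preceq\gamma_t(\bar\rho)I_d$ by submultiplicativity alone; the paper simply centers the packing at $\theta=0$ (with $\gamma=0$ assumed without loss of generality, a simplification you also flag).

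Second, and more substantively, your final information-theoretic step differs from the paper's and has a gap at the boundary $\delta\uparrow 1/4$. The paper applies Birg\'e's inequality to the single $2^m$-point packing, obtaining $\max\kl\ge(1-2\delta)\log\{(\lvert\mathcal{N}\rvert-1)/(2\delta)\}$, which stays bounded below by a constant multiple of $m+\log(1/\delta)$ uniformly over $\delta\in(0,1/4)$. Your split into a Fano regime (for the $m$ term) and a two-point Bretagnolle--Huber regime (for the $\log(1/\delta)$ term) fails when $m\le 2$ and $\delta$ is close to $1/4$: the $2^m$-point Fano bound does not then yield error probability $\ge 1/4$, and the Bretagnolle--Huber bound $\tfrac14 e^{-\kl}\ge\delta$ requires $\kl\le\log\{1/(4\delta)\}$, which tends to zero as $\delta\uparrow 1/4$ and therefore cannot be guaranteed by a hypothesis of the form $n\gamma_n(\bar\rho)\epsilon^2\le c\log(1/\delta)$ with a universal $c$. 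This is repairable --- replace Bretagnolle--Huber by the Pinsker form of Le Cam's bound, $\tfrac12\{1-(\kl/2)^{1/2}\}\ge 1/4$ whenever $\kl\le 1/2$, or simply use Birg\'e's inequality as the paper does --- but as written the constant in your ``$\lesssim$'' would have to depend on $\delta$.
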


As a result, we have the following minimax rates of estimation across different values of $\bar{\rho}$.

\begin{corollary}\label{cor_lower}
	For the linearly restricted vector autoregressive model in Theorem \ref{thm_lower}, the minimax rates of estimation over $\beta\in \mathcal{L}(\bar{\rho})$  are given as follows:
	
	(i) $\{{(1-\bar{\rho}^2)m}/{n}\}^{1/2}$,  if $0<\bar{\rho}< (1-1/n)^{1/2}$; 
	
	(ii) $m^{1/2}/n$, if $(1-1/n)^{1/2} \leq \bar{\rho} \leq 1+c/n$ for a fixed $c>0$; and
	
	(iii) $\bar{\rho}^{-n}\{(\bar{\rho}^2-1)m/n\}^{1/2}$, if $\bar{\rho}>1+c/n$.
\end{corollary}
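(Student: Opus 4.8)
The plan is to read off all three rates from the minimax lower bound of Theorem~\ref{thm_lower} by taking the separation radius $\epsilon$ as large as that theorem allows, and then to exhibit matching upper bounds via Theorem~\ref{thm3}. The only genuinely new computation is the evaluation of the geometric sum $\gamma_n(\bar{\rho})=\sum_{s=0}^{n-1}\bar{\rho}^{2s}$ in the three ranges of $\bar{\rho}$, together with a check that the admissibility constraint $\epsilon\leq\bar{\rho}/4$ in Theorem~\ref{thm_lower} is not binding.

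For the lower bound I would fix $\delta$ to an absolute constant in $(0,1/4)$, so that $m+\log(1/\delta)\asymp m$, and set $\epsilon\asymp\{m/(n\gamma_n(\bar{\rho}))\}^{1/2}$ with a small enough absolute constant; then $n\gamma_n(\bar{\rho})\epsilon^2\lesssim m$ holds by construction and Theorem~\ref{thm_lower} gives $\inf_{\widehat{\beta}}\sup_{\theta\in\Theta(\bar{\rho})}\P_{\theta}^{(n)}\{\lVert\widehat{\beta}-\beta\rVert\geq\epsilon\}\geq\delta$. It remains to evaluate $\gamma_n(\bar{\rho})$. In case (i), $0<\bar{\rho}<(1-1/n)^{1/2}$ forces $\bar{\rho}^{2n}\leq(1-1/n)^n<e^{-1}$, so $\gamma_n(\bar{\rho})=(1-\bar{\rho}^{2n})/(1-\bar{\rho}^2)\asymp(1-\bar{\rho}^2)^{-1}$ and $\epsilon\asymp\{(1-\bar{\rho}^2)m/n\}^{1/2}$. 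In case (ii), every summand $\bar{\rho}^{2s}$ with $0\leq s\leq n-1$ lies between $(1-1/n)^{n-1}$ and $(1+c/n)^{2n}\leq e^{2c}$, both absolute constants, so $\gamma_n(\bar{\rho})\asymp n$ and $\epsilon\asymp m^{1/2}/n$. In case (iii), $\bar{\rho}>1+c/n$ gives $\bar{\rho}^{2n}>(1+c/n)^{2n}\geq(1+c)^2>1$, hence $\bar{\rho}^{2n}-1\asymp\bar{\rho}^{2n}$, $\gamma_n(\bar{\rho})=(\bar{\rho}^{2n}-1)/(\bar{\rho}^2-1)\asymp\bar{\rho}^{2n}/(\bar{\rho}^2-1)$, and $\epsilon\asymp\bar{\rho}^{-n}\{(\bar{\rho}^2-1)m/n\}^{1/2}$. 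In each case the side condition $\epsilon\leq\bar{\rho}/4$ reduces to a mild requirement on $(m,n,\bar{\rho})$ ($m\lesssim n$ in cases (ii)--(iii); $\bar{\rho}^2\gtrsim m/n$ when $\bar{\rho}$ is small in case (i)), which I would record explicitly.

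For the matching upper bounds I would evaluate Theorem~\ref{thm3} at the extremal instance $A_*=\bar{\rho}I_d$, for which $\cond(S)=1$, $b_{\max}=1$, and, by~\eqref{eq:lambda_max_equal}, $\lambda_{\max}(\Gamma_{R,k})=\gamma_k^{-1}(\bar{\rho})$; taking $k$ as large as~\eqref{eq:k_sufficient} permits, $\gamma_k^{-1}(\bar{\rho})$ is of order $1-\bar{\rho}^2$ in case (i) and of order $1/n$ in case (ii). Since $\{\eta_t\}$ are normal here, Remark~\ref{remark:phases} lets us drop the $\log m$ factor, and Theorem~\ref{thm3} then yields $\lVert\widehat{\beta}-\beta_*\rVert\lesssim\{(1-\bar{\rho}^2)m/n\}^{1/2}$ in case (i) and $\lVert\widehat{\beta}-\beta_*\rVert\lesssim m/n$ in case (ii), matching the lower bounds up to logarithmic factors. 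Case (iii) falls outside the range $\rho(A_*)\leq1+c/n$ treated by Theorem~\ref{thm3}; there the matching rate $\bar{\rho}^{-n}\{(\bar{\rho}^2-1)m/n\}^{1/2}$ is the one known for ordinary least squares on explosive vector autoregressions (cf.\ Remark~\ref{remark:Michailidis}), which I would cite. Concatenating the three cases gives the claim.

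The main obstacle is the bookkeeping around the phase boundary $\bar{\rho}\approx1$: one must check that the truncations $(1-1/n)^n$, $(1-1/n)^{n-1}$ and $(1+c/n)^{2n}$ are uniformly bounded away from $0$ and $\infty$, so that $\gamma_n(\bar{\rho})$ passes cleanly from order $(1-\bar{\rho}^2)^{-1}$ to order $n$ to order $\bar{\rho}^{2n}/(\bar{\rho}^2-1)$, and that $\epsilon\leq\bar{\rho}/4$ costs nothing except in the harmless corner $\bar{\rho}=O((m/n)^{1/2})$. A second point worth flagging in the write-up is interpretational rather than technical: the ordinary least squares upper bound over the entire class is governed by $\sigma_{\min}(A_*)$ rather than $\rho(A_*)$, so ``the minimax rate over $\mathcal{L}(\bar{\rho})$'' is to be read as the difficulty at the boundary instance $A_*=\bar{\rho}I_d$ (the hard instance for Theorem~\ref{thm_lower}), where lower and upper bounds meet up to logarithmic factors.
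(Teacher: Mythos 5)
Your lower-bound argument is essentially the paper's own proof: take $\epsilon^2\asymp\{m+\log(1/\delta)\}/\{n\gamma_n(\bar{\rho})\}$ in Theorem \ref{thm_lower} and then bound $\gamma_n(\bar{\rho})$ in the three regimes, using $\gamma_n(\bar{\rho})\le\min\{n,(1-\bar{\rho}^2)^{-1}\}$ for $\bar{\rho}<1$, $\gamma_n(\bar{\rho})\le n(1+c/n)^{2n}\lesssim n$ for $1\le\bar{\rho}\le1+c/n$, and $1/\gamma_n(\bar{\rho})>(\bar{\rho}^2-1)/\bar{\rho}^{2n}$ for $\bar{\rho}>1$; your two-sided asymptotics and the explicit check of $\epsilon\le\bar{\rho}/4$ are harmless refinements of the same computation. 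The one thing to be careful about is your added "matching upper bound" discussion, which the paper's proof does not attempt (the corollary is established purely as a consequence of Theorem \ref{thm_lower}): at $A_*=\bar{\rho}I_d$ the Theorem \ref{thm3} upper bound is the slow rate $\{(1-\bar{\rho}^2)m/n\}^{1/2}$ only when $\bar{\rho}\le1-c_1\{m+\log(d/\delta)\}/n$, while in the remainder of case (i) and in case (ii) it is the fast rate of order $(m+\log d)/n$, which exceeds the lower bound $m^{1/2}/n$ by a factor of $m^{1/2}$ rather than a logarithm (see Remark \ref{remark:phases_all}); so the claim that upper and lower bounds match up to logarithmic factors in case (ii) is an overstatement. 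This does not affect the validity of the rates as stated, since they are proved (as in the paper) as minimax lower bounds.
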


While the phase transition in Corollary \ref{cor_lower} depends on $\rho(A_*)$ instead of  $\sigma_{\min}(A_*)$, we may still compare the lower bounds to the upper bounds in Theorem \ref{thm3}.  For case (i)  in Corollary \ref{cor_lower}, since $\sigma_{\min}(A_*)\leq \rho(A_*)<(1-1/n)^{1/2}<1$, we may expect that condition \eqref{eq:sigmin_slow2} will  hold in most cases, and hence the upper bound $O\{(m \log m/n)^{1/2}\}$ differs from the lower bound by a factor of $\log m$. However, for case (ii), the upper bound may lie in either the slow or fast rate regime, depending on the magnitude of $\sigma_{\min}(A_*)$, whereas the lower bound lies in the fast rate regime. As shown by our first experiment in $\S$ \ref{sec:simul}, the transition from slow to fast error rates actually depends on $\sigma_{\min}(A_*)$ instead of $\rho(A_*)$. This also suggests that the results in Theorem \ref{thm3} are sharp in the sense that they correctly capture the transition behavior.

\begin{remark}\label{remark:phases_all}
	If $A_*=\rho I_d$ with $\rho\in\mathbb{R}$ and $\{\eta_t\}$ are normal, in view of Remark \ref{remark:phases} and Corollary \ref{cor_lower},  a more straightforward comparison of the upper and lower bounds can be made:
	\begin{center}
		\begin{tabular}{l|@{\hskip 1mm}c@{\hskip 1mm}c}
			\multicolumn{1}{c|@{\hskip 1mm}}{Range of $\lvert \rho \rvert$}& Lower bound                            & Upper bound\\\hline
			$(0,1-O\left \{(m+\log d)/n\right \}]$     & $\Omega[\{{(1-\rho^2)m}/{n}\}^{1/2}]$   & $O[\{(1-\rho^2)m/n\}^{1/2}]$\\
			$[1-O\{(m+\log d)/n\},(1-1/n)^{1/2})$       & $\Omega[\{{(1-\rho^2)m}/{n}\}^{1/2}]$   & $O\{(m+\log d)/n\}$\\
			$[(1-1/n)^{1/2},1)$                               & $\Omega(m^{1/2}/n)$               & $O\{(m+\log d)/n\}$\\
			$[1, 1+O(1/n)]$                                  & $\Omega(m^{1/2}/n)$               & $O\{m\log(dn)/n\}$\\
			$(1+O(1/n), \infty)$                             & $\Omega[\lvert\rho\rvert^{-n}\{(\rho^2-1)m/n\}^{1/2}]$& $-$
		\end{tabular}
	\end{center}
	See Figure \ref{fig5} for an illustration of the theoretical bounds and actual rates suggested by simulation results in $\S$ \ref{sec:simul}. Note that the actual rates and the theoretical upper and lower bounds exactly match when $0<\vert \rho \vert\leq 1-O\{(m+\log d)/n\}$. In addition, the suggested actual rate is  $m/n$ for $1-O\{(m+\log d)/n\}< \vert \rho \vert< 1+O(1/n)$ and  even faster for $\vert \rho \vert$ beyond this range. 
\end{remark}

\begin{figure}[h]
	\centering
	\includegraphics[width=\textwidth]{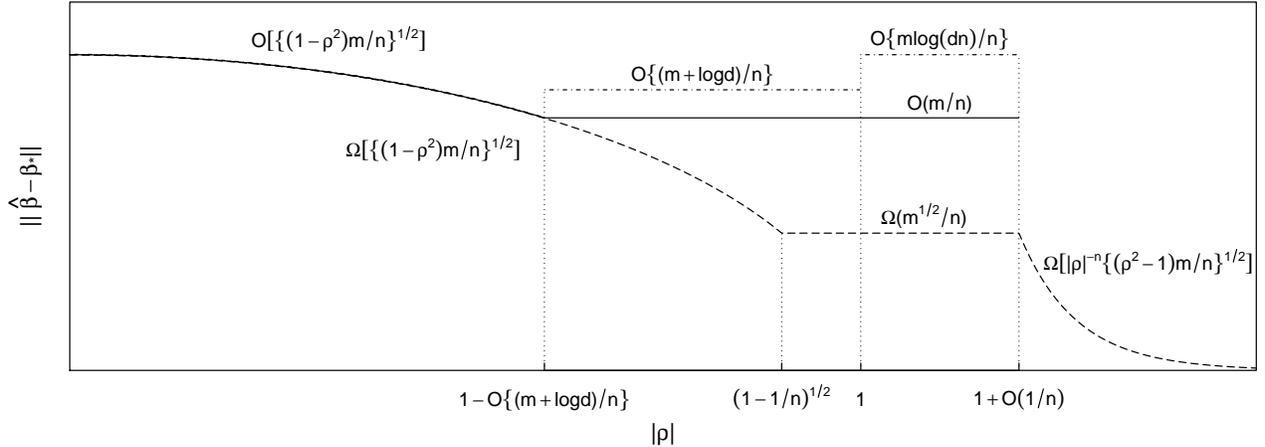}
	\caption{Illustration of theoretical upper (dot-dash) and lower (dashes) bounds and the actual rates (solid) suggested by simulation results in $\S$ \ref{sec:simul} for the $\VAR(1)$ model with $A_*=\rho I_d$ and normal innovations.}
	\label{fig5}
\end{figure}

\begin{remark}\label{remark:Han}
	\cite{Han_Xu_Liu2015} considered the estimation  of a class of copula-based stationary vector autoregressive processes which includes the Gaussian \VAR($1$) process as a special case, and extended the theoretical properties to $\VAR(p)$ processes by arguments similar to those in Example \ref{ex:VARp}. Under a strong-mixing condition on the process and  the low-rank assumption $\rank(A_*)\leq r$, the proposed estimator $\widetilde{A}$ was proved to attain the minimax error rate $\lVert \widetilde{A}-A_* \rVert_F=O\{(dr/n)^{1/2}\}$.  While we consider different restrictions and estimation method, the  rate $\{{(1-\bar{\rho}^2)m}/{n}\}^{1/2}$ in Corollary \ref{cor_lower} for the stable regime  resembles that in \cite{Han_Xu_Liu2015} if we regard $dr$  as the effective model size $m$ of the low-rank $\VAR(1)$ model. However, similarly to our upper analysis in $\S$ \ref{sec:VAR_upperbd}, the factor of $(1-\bar{\rho}^2)^{1/2}$ in our lower bound also reveals that the estimation error may decrease as $A_*$ approaches the stability boundary.
\end{remark}

\section{Simulation experiments\label{sec:simul}}
We conduct three simulation experiments to verify the theoretical results in previous sections, including (i) the estimation error rates, (ii) the transition from slow to fast  rates regimes, and (iii) the effect of the ambient dimension $d$ on the estimation. The data are generated from \VAR$(1)$ processes with $\{\eta_t\}$ drawn independently from $N(0, I_d)$ and the following structures of $A_*$:

\DGP1: Banded structure defined by the  zero restrictions $a_{*ij}=0$ if $\vert i-j\vert > k_0$, where $1\leq k_0 \leq \lfloor (d-1)/2 \rfloor$ is the bandwidth parameter; see Example \ref{ex:bandedAR} in $\S$ \ref{subsec:var_example}. As a result, if all restrictions are imposed, the size of the model is $m=d + (2d - 1)k_0 - k_0^2$.

\DGP2: Group structure defined by equality restrictions as follows. Partition the index set $\mathcal{V}=\{1,\ldots, d\}$ of the coordinates of $X_t$  into $K$ groups of size $b=d/K$ as $\mathcal{V}=\bigcup_{k=1}^K \mathcal{G}_k$, where
\[\mathcal{G}_k=\{(k-1)b+1,\ldots, kb\}\quad (k=1,\ldots, K).\] 
In each row of $A_*$, the off-diagonal entries $a_{*ij}$ with $j$ belonging to the same group are assumed to be equal:  for any $1\leq k\leq K$ and $1\leq i\leq d$, all elements of $\{a_{*ij}, j\in\mathcal{G}_k, j\neq i\}$ are equal. Thus, $m=(K+1)d$, as there are $(K+1)$ free parameters in each row of $A_*$.	

\DGP3: $A_*=\rho I_d$, where $\rho \in\mathbb{R}$. Note that the smallest true model with size $m=1$ results from imposing zero restrictions on all off-diagonal entries of $A_*$ and equality restrictions on all diagonal entries; see Example \ref{ex:pure_unit_root} in $\S$ \ref{subsec:var_example}.  

Throughout the experiments, the $\ell_2$ estimation error $\lVert \widehat{\beta} - \beta_* \rVert$ is calculated by averaging over $1000$ replications. Except for \DGP3, nonzero entries of $A_*$ are generated independently from $U[-1,1]$ and then rescaled such that $\rho(A_*)$ is equal to a certain value. 

The first experiment aims to verify the error rates in Theorem \ref{thm3} and the implication of Theorem \ref{thm2} that the restrictions can reduce the estimation error through both the explicit rate $(m/n)^{1/2}$ and the decrease in the factor  $\lambda_{\max}(\Gamma_{R,k})$. Fixing $d=24$ and $\rho(A_*)=0.2,0.8$ or $1$, we generate data from \DGP1 with $k_0=1$, \DGP2 with $K=2$, and \DGP3. 
For \DGP1 and \DGP3, we fit banded vector autoregressive models with $k_0=1,5$ or $7$ such that $m=70, 156$ or $304$, respectively. For \DGP2, we fit the group-structured model with $K=2,8$ or $12$ such that $m=72, 120$ or $312$, respectively. Note that for \DGP1 and \DGP2,  $\sigma_{\min}(A_*)\leq 0.1$ even when the randomly generated matrix $A_*$ has $\rho(A_*)=1$. However, for \DGP3, it holds  $\sigma_{\min}(A_*)=\rho(A_*)$.  The $\ell_2$ estimation error $\lVert \widehat{\beta} - \beta_* \rVert$ is plotted against $(m/n)^{1/2}$ in Figure \ref{fig1}, where we consider $(m/n)^{1/2}\in\{0.15,0.35,0.55,0.75,0.95\}$.  	Our findings are summarized as follows:

\begin{figure}[t!]
	\centering
	\includegraphics[width=\linewidth]{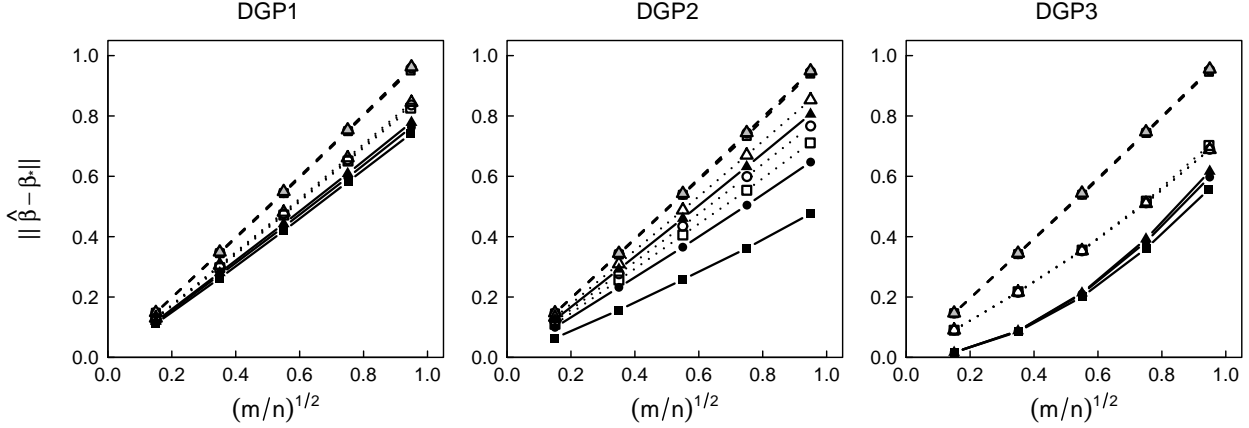}
	\caption{Plots of $\lVert \widehat{\beta} - \beta_* \rVert$ against $(m/n)^{1/2}$ for three data generating processes with  $\rho(A_*)=0.2$ (dashed lines, grey-filled symbols), $0.8$ (dotted lines, unfilled symbols), or $1$ (solid lines, black-filled symbols) and different $m$. \DGP1 and \DGP3 were fitted as banded vector autoregressive models with $m=70$ (squares), $156$ (circles), or $304$ (triangles), and \DGP2 was fitted as grouped vector autoregressive models with $m=72$ (squares), $120$ (circles), or $312$ (triangles).}
	\label{fig1}
\end{figure}

(i) When $\rho(A_*)=0.2$, for all data generating processes, the lines for different $m$ coincide completely with each other and scale perfectly linearly with $(m/n)^{1/2}$. This suggests that the actual error rate is $(m/n)^{1/2}$ when $\sigma_{\min}(A_*)$ lies in the slow rate regime of Theorem \ref{thm3}.

(ii) When $\rho(A_*)=0.8$ or 1, for \DGP1 and \DGP2, although $\lVert \widehat{\beta} - \beta_* \rVert$ is still proportional to $(m/n)^{1/2}$, the three lines for the same $\rho(A_*)$  but different $m$ do not coincide: fixing $\rho(A_*)$, the slope increases as $m$ increases, and the variation in slope is greater as $\rho(A_*)$ is larger.  Note that for \DGP1 and \DGP2, $\sigma_{\min}(A_*)$ is very small.
As  finding (i) suggests that the actual error rate is $(m/n)^{1/2}$ for small $\sigma_{\min}(A_*)$, this extra variation in slope  may be partially explained by the factor $\lambda_{\max}(\Gamma_{R,k})$ in the error bound in Theorem \ref{thm2} due to the effect of $R$. However, $\lVert \widehat{\beta} - \beta_* \rVert$ actually depends on the spectrum of $\Gamma_{R,k}$, and its largest eigenvalue merely serves as an upper bound. When $\rho(A_*)$ is smaller,  we will have more control over the spectrum of $A_*$ and hence that of $\Gamma_{R,k}$. This may explain why the variation in slope is smaller when  $\rho(A_*)$ is smaller.

(iii) For \DGP3 with $\rho(A_*)=0.2$ or $0.8$, the three lines corresponding to different $m$ still completely coincide with each other.  This can be explained by the fact that $\lambda_{\max}(\Gamma_{R,k})$ is independent of $R$ when $A_*=\rho I_d$; see \eqref{eq:lambda_max_equal}.

(iv) For \DGP3 with $\rho(A_*)=1$, in sharp contrast to all other cases, the  error rate  appears to be a quadratic function of $(m/n)^{1/2}$. This  matches the implication of  Theorem \ref{thm3} that when $\sigma_{\min}(A_*)=1$, the error rate falls into the fast rate regime.

(v) Fixing both $m$ and $n$, $\lVert \widehat{\beta} - \beta_* \rVert$ always decreases as $\rho(A_*)$ increases. Moreover, when $\sigma_{\min}(A_*)<1$, fixing $m$,  the lines become less steep as $\rho(A_*)$ increases. Note that $\sigma_{\min}(A_*)$ is larger when $\rho(A_*)$ is, due to our method of generating $A_*$. Thus, this finding can be explained by the factor $\{1-\sigma_{\min}^2(A_*)\}^{1/2}$ in the  error bound  for  the slow rate regime in Theorem \ref{thm3}.

\begin{figure}[t!]
	\centering
	\includegraphics[width=0.9\textwidth]{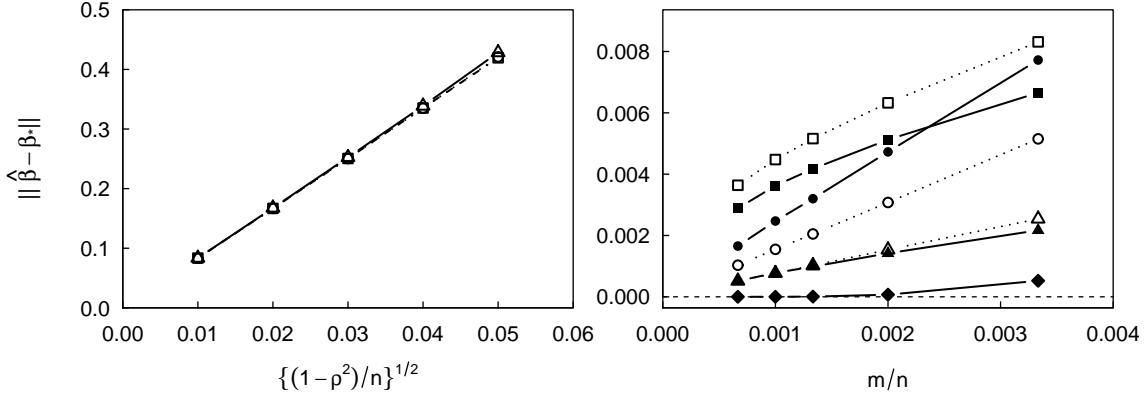}
	\caption{Error rates for \DGP3 as $\rho$ is fixed or approaching one at different rates. Left panel: plot of $\lVert \widehat{\beta} - \beta_* \rVert$ against $\{(1-\rho^2)/n\}^{1/2}$ with $\rho=0.2$ (dashed lines, squares), $0.4$ (dotted lines, circles) or $0.6$ (solid lines, triangles), and $m=70$. Right panel: plot of $\lVert \widehat{\beta} - \beta_* \rVert$ against $m/n$  with $\rho=0.99$ (squares), $1-(m+\log d)/n$ (circles), $1+1/n$ (triangles) or $1.01$ (diamonds), and $m=1$ (solid lines, filled symbols) or $70$ (dotted lines, unfilled symbols). The case of $(m,\rho)=(70,1.01)$ is omitted as the process becomes so explosive that the computation is numerically infeasible.}
	\label{fig2a}
\end{figure}

\begin{figure}[t!]
	\centering
	\includegraphics[width=0.9\textwidth]{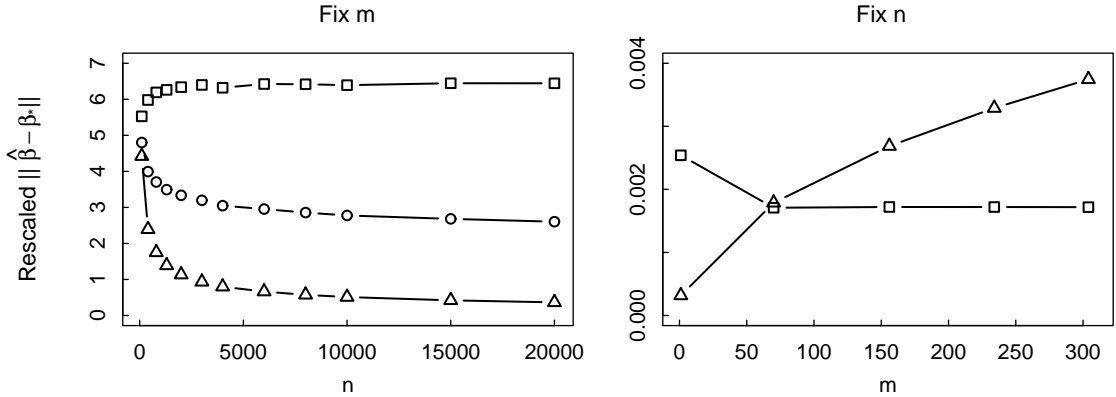}
	\caption{Error rates for \DGP3 when $\rho=1$. Left panel: plot of $\lVert \widehat{\beta} - \beta_* \rVert$ multiplied by $n/8$ (squares), $n/(2\log n)$ (circles) or $n^{1/2}$ (triangles)  against $n$, fixing $m=70$. Right panel: plot of $\lVert \widehat{\beta} - \beta_* \rVert$ divided by $m$  (squares) or $8m^{1/2}$ (triangles) against $m$, fixing $n=400$.}
	\label{fig2b}
\end{figure}

In the second experiment, we focus on \DGP3 to further investigate the error rates and the phase transition. We set $d=24$ and $m=1, 70, 156$ or $304$, where $m=1$ results from fitting the smallest true model, and $m=70, 156$ or $304$ from fitting a banded model with $k_0=1,3$ or $7$, respectively. Figures \ref{fig2a} and \ref{fig2b} display the results, where we have the following findings: 

(i) The combination of results from the first experiment and the left panel of Figure \ref{fig2a} suggests that $\lVert \widehat{\beta} - \beta_* \rVert$ scales as $O[\{(1-\rho^2)m/n\}^{1/2}]$ when $\vert \rho \vert$ is fixed at a level well below one.

(ii) Figure \ref{fig2b} suggests that when $\lvert\rho\rvert=1$ the actual error rate is $m/n$. Specifically, when $m$ is fixed, the left panel shows that $n \lVert \widehat{\beta} - \beta_* \rVert$ becomes stable for $n$ sufficiently large, while $\lVert \widehat{\beta} - \beta_* \rVert$ multiplied by $n^{1/2}$ or $n/\log n$ appears to diminish as $n\rightarrow\infty$. On the other hand, when $n$ is fixed, the right panel shows that $\lVert \widehat{\beta} - \beta_* \rVert/m$ becomes stable for $m$ sufficiently large. 

(iii) The right panel of Figure \ref{fig2a} suggests that the regime of rate $m/n$ is reached as early as $\vert\rho\vert=1-O\{(m+\log d)/n\}$ and maintains even as the process becomes slightly explosive with  $\vert\rho\vert=1+O(1/n)$. This lends support to the boundaries of the fast rate regime suggested by Theorem \ref{thm3}; see Remarks  \ref{remark:phases} and \ref{remark:phases_all}. By contrast, when $\rho=0.99$, the rate appears to be $(m/n)^{1/2}$, similarly to our findings in the first experiment.  On the other hand, when $\rho$ is fixed at a level slightly above one, the rate becomes even faster than $m/n$. This matches the conclusion in Remark \ref{remark:phases_all} that the corresponding lower bound  diminishes at a rate faster than $\lvert\rho\rvert^n$ as $n$ increases.

\begin{figure}[t!]
	\centering
	\includegraphics[width=0.9\textwidth]{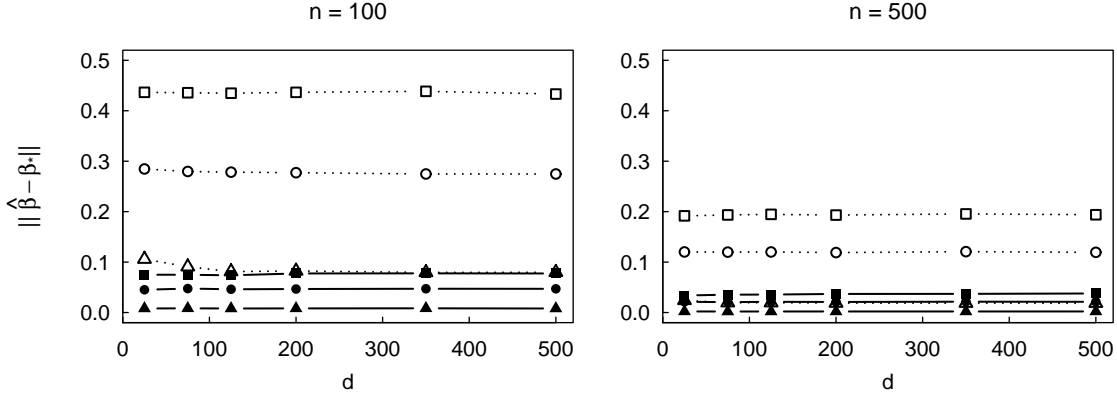}
	\caption{Plots of $\lVert \widehat{\beta} - \beta_* \rVert$ against $d$ for \DGP3  with $n=100$ (left panel) and $500$ (right panel), when $\rho=0.2$ (squares), $0.8$ (circles) or $1$ (triangles), and $m=1$ (solid lines, filled symbols) or $20$ (dotted lines, unfilled symbols).}
	\label{fig3}
\end{figure}

The third experiment aims to check if the ambient dimension $d$  directly affects  the estimation error. We generate data from \DGP3 with $\rho=0.2, 0.8$ or $1$, $n=100$ or $500$ and $d\in[25,500]$. For the estimation, we consider $m=1$ or $20$, where $m=1$ corresponds to the smallest true model, and $m=20$ corresponds to a model subject to (1) $a_{*11}=\cdots=a_{*dd}$ and (2) the restriction that all but $m-1$ of the off-diagonal entries of $A_*$ are zero.  To generate the pattern in (2), we sample the $m-1$ positions uniformly without replacement from all off-diagonal positions of $A_*$. 

Figure \ref{fig3} shows that the estimation error is constant in $d$ for almost all cases. This confirms that $d$ does not affect the estimation error when $\lvert \rho\rvert <1-O\left \{(m+\log d)/n\right \}$; see Remark \ref{remark:phases_all}. Moreover, the extra factor of $\log d$ in the theoretical upper bounds for the other regimes might not be necessary. 	For $(n, m, \rho)=(100, 20, 1)$, the estimation error seems less stable when $d\leq 75$; see the left panel of Figure \ref{fig3}. 	
This might be explained  by the indirect effect of $R$ on $\lambda_{\max}(\Gamma_{R,k})$ in Theorem \ref{thm2}. As $m$ is fixed at $20$, different $d$ corresponds to different $R$. The spectrum of $\Gamma_{R,k}$ may be more sensitive to $R$ when $d$ is smaller, and the resulting impact on the estimation error may be more pronounced when $n$ is smaller. However, as $d$ grows, the restrictions will  become relatively more sparse, so eventually the spectrum of $\Gamma_{R,k}$ will  be stable, and the impact of $d$ will be negligible.

\section{Discussion\label{sec:discuss}}
An interesting future direction is dimensionality reduction for vector autoregressive models with data-driven restrictions.
Such a procedure involves first suggesting possible restrictions based on subject knowledge and then selecting the true restrictions by a data-driven approach. Note that the lasso method \citep{Davis_Zang_Zheng2015, Basu_Michailidis2015} can be viewed as a procedure where zero restrictions are initially suggested for all entries of $A$, and then the true zeros are identified by penalized estimation. Adopting a more general point of view, the modeller can initially suggest the general linear restrictions  \eqref{eq:lr}  instead. This will enable a more flexible and data-driven integration of expert knowledge. On the other hand, if it is known that only zero and equality restrictions are true, yet the locations of the restrictions are unknown, we can select the true restrictions efficiently by  the  delete or merge regressors algorithm proposed by \cite{Maj-Kanska2015} based on the Bayesian information criterion. The consistency of this procedure can be easily extended to vector autoregressive models.

\section*{Acknowledgement}
We thank the editor, associate editor and two referees for their invaluable comments, which have led to substantial improvements of our paper. Cheng's research was partially supported by the U.S. National Science Foundation and the Office of Naval Research, and he wishes to thank the Institute for Advanced Study at Princeton for its hospitality during his visit in Fall 2019.

\section*{Supplementary material}
\label{SM}
Supplementary material available at \Bka\ online includes all technical proofs of this paper.

%

\vspace*{-10pt}

\bibliographystyle{biometrika}
\bibliography{VAR_restricted}

\begin{thebibliography}{}

\bibitem[Ahn and Reinsel, 1988]{Ahn_Reinsel1988}
Ahn, S.~K. and Reinsel, G.~C. (1988).
\newblock Nested reduced-rank autogressive models for multiple time series.
\newblock {\em Journal of the American Statistical Association}, 83:849--856.

\bibitem[Basu and Michailidis, 2015]{Basu_Michailidis2015}
Basu, S. and Michailidis, G. (2015).
\newblock Regularized estimation in sparse high-dimensional time series models.
\newblock {\em The Annals of Statistics}, 43:1535--1567.

\bibitem[Boucheron et~al., 2013]{Boucheron_Lugosi_Massart2013}
Boucheron, S., Lugosi, G., and Massart, P. (2013).
\newblock {\em Concentration inequalities: A nonasymptotic theory of
  independence}.
\newblock Oxford University Press, Oxford.

\bibitem[Bringmann et~al., 2013]{Bringmann2013}
Bringmann, L.~F., Vissers, N., Wichers, M., Geschwind, N., Kuppens, P.,
  Peeters, F., Borsboom, D., and Tuerlinckx, F. (2013).
\newblock A network approach to psychopathology: \textsc{N}ew insights into
  clinical longitudinal data.
\newblock {\em PLoS ONE}, 8:e60188.

\bibitem[Chang, 2004]{Chang2004}
Chang, Y. (2004).
\newblock Bootstrap unit root tests in panels with cross-sectional dependency.
\newblock {\em Journal of Econometrics}, 120:263--293.

\bibitem[Davis et~al., 2015]{Davis_Zang_Zheng2015}
Davis, R.~A., Zang, P., and Zheng, T. (2015).
\newblock Sparse vector autoregressive modeling.
\newblock {\em Journal of Computational and Graphical Statistics},
  25:1077--1096.

\bibitem[Dowell and Pinson, 2016]{Dowell_Pinson2016}
Dowell, J. and Pinson, P. (2016).
\newblock Very-short-term probabilistic wind power forecasts by sparse vector
  autoregression.
\newblock {\em IEEE Transactions on Smart Grid}, 7:763--770.

\bibitem[Fang et~al., 1990]{Fang_Kotz_Ng1990}
Fang, K.-T., Kotz, S., and Ng, K.~W. (1990).
\newblock {\em Symmetric Multivariate and Related Distributions}.
\newblock Chapman and Hall/CRC, New York.

\bibitem[Faradonbeh et~al., 2018]{Faradonbeh_Tewari_Michailidis2018}
Faradonbeh, M. K.~S., Tewari, A., and Michailidis, G. (2018).
\newblock Finite time identification in unstable linear systems.
\newblock {\em Automatica}, 96:342--353.

\bibitem[Gorrostieta et~al., 2012]{Gorrostieta2012}
Gorrostieta, C., Ombao, H., B\'{e}dard, P., and Sanes, J.~N. (2012).
\newblock Investigating brain connectivity using mixed effects vector
  autoregressive models.
\newblock {\em NeuroImage}, 59:3347--3355.

\bibitem[Guo et~al., 2016]{Guo_Wang_Yao2016}
Guo, S., Wang, Y., and Yao, Q. (2016).
\newblock High-dimensional and banded vector autoregressions.
\newblock {\em Biometrika}, 103:889--903.

\bibitem[Hamilton, 1994]{Hamilton1994}
Hamilton, J.~D. (1994).
\newblock {\em Time Series Analysis}.
\newblock Princeton University Press, Princeton.

\bibitem[Han et~al., 2015a]{Han_Lu_Liu2015}
Han, F., Lu, H., and Liu, H. (2015a).
\newblock A direct estimation of high dimensional stationary vector
  autoregressions.
\newblock {\em Journal of Machine Learning Research}, 16:3115--3150.

\bibitem[Han et~al., 2015b]{Han_Xu_Liu2015}
Han, F., Xu, S., and Liu, H. (2015b).
\newblock Rate-optimal estimation of a high-dimensional semiparametric time
  series model.
\newblock Preprint.

\bibitem[Horn and Johnson, 1985]{Horn_Johnson1985}
Horn, R.~A. and Johnson, C.~R. (1985).
\newblock {\em Matrix Analysis}.
\newblock Cambridge University Press, New York.

\bibitem[Kano, 1994]{Kano1994}
Kano, Y. (1994).
\newblock Consistency property of elliptical probability density functions.
\newblock {\em Journal of Multivariate Analysis}, 51:139--147.

\bibitem[Kotz and Nadarajah, 2004]{Kotz_Nadarajah2004}
Kotz, S. and Nadarajah, S. (2004).
\newblock {\em Multivariate t-Distributions and Their Applications}.
\newblock Cambridge University Press.

\bibitem[L\"{u}tkepohl, 2005]{Lutkepohl2005}
L\"{u}tkepohl, H. (2005).
\newblock {\em New Introduction to Multiple Time Series Analysis}.
\newblock Springer-Verlag Berlin Heidelberg.

\bibitem[Maj-Ka\'{n}ska et~al., 2015]{Maj-Kanska2015}
Maj-Ka\'{n}ska, A., Pokarowski, P., and Prochenka, A. (2015).
\newblock Delete or merge regressors for linear model selection.
\newblock {\em Electronic Journal of Statistics}, 9:1749--1778.

\bibitem[Mendelson, 2014]{Mendelson2014}
Mendelson, S. (2014).
\newblock Learning without concentration.
\newblock In {\em JMLR: Workshop and Conference Proceedings}, volume~35, pages
  1--15.

\bibitem[Negahban and Wainwright, 2011]{Negahban_Wainwright2011}
Negahban, S. and Wainwright, M.~J. (2011).
\newblock Estimation of (near) low-rank matrices with noise and
  high-dimensional scaling.
\newblock {\em The Annals of Statistics}, 39:1069--1097.

\bibitem[Recht, 2018]{Recht2018}
Recht, B. (2018).
\newblock A tour of reinforcement learning: \textsc{T}he view from continuous
  control.
\newblock {\em arXiv:1806.09460}.

\bibitem[Reinsel, 1993]{Reinsel1993}
Reinsel, G.~C. (1993).
\newblock {\em Elements of Multivariate Time Series Analysis}.
\newblock Springer-Verlag, New York.

\bibitem[Rudelson and Vershynin, 2015]{Rudelson_Vershynin2015}
Rudelson, M. and Vershynin, R. (2015).
\newblock Small ball probabilities for linear images of high-dimensional
  distributions.
\newblock {\em International Mathematics Research Notices}, 2015:9594--9617.

\bibitem[Simchowitz et~al., 2018]{Simchowitz2018}
Simchowitz, M., Mania, H., Tu, S., Jordan, M., and Recht, B. (2018).
\newblock Learning without mixing: \textsc{T}owards a sharp analysis of linear
  system identification.
\newblock In {\em Proceedings of Machine Learning Research}, volume~75, pages
  439--473.
\newblock 31st Annual Conference on Learning Theory.

\bibitem[Sims, 1980]{Sims1980}
Sims, C.~A. (1980).
\newblock Macroeconomics and reality.
\newblock {\em Econometrica}, 48:1--48.

\bibitem[Stock and Watson, 2001]{Stock_Watson2001}
Stock, J.~H. and Watson, M.~W. (2001).
\newblock Vector autoregressions.
\newblock {\em Journal of Economic Perspectives}, 15:101--115.

\bibitem[Tsay, 2013]{Tsay2013}
Tsay, R.~S. (2013).
\newblock {\em Multivariate Time Series Analysis: With R and Financial
  Applications}.
\newblock John Wiley \& Sons.

\bibitem[Vershynin, 2012]{Vershynin2012}
Vershynin, R. (2012).
\newblock Introduction to the nonasymptotic analysis of random matrices.
\newblock In Eldar, Y. and Kutyniok, G., editors, {\em Compressed Sensing:
  Theory and Applications}, chapter~5, pages 210--268. Cambridge University
  Press.

\bibitem[Vershynin, 2018]{Vershynin2018}
Vershynin, R. (2018).
\newblock {\em High-Dimensional Probability: An Introduction with Applications
  in Data Science}.
\newblock Cambridge University Press, Cambridge.

\bibitem[Wilms et~al., 2017]{Wilms_Basu_Bien_Matteson2017}
Wilms, I., Basu, S., Bien, J., and Matteson, D.~S. (2017).
\newblock Interpretable vector autoregressions with exogenous time series.
\newblock In {\em Symposium on Interpretable Machine Learning, 31st Conference
  on Neural Information Processing Systems (NIPS 2017)}.

\bibitem[Wu and Xia, 2016]{Wu_Xia2016}
Wu, J.~C. and Xia, F.~D. (2016).
\newblock Measuring the macroeconomic impact of monetary policy at the zero
  lower bound.
\newblock {\em Journal of Money, Credit and Banking}, 48:253--291.

\bibitem[Zhang et~al., 2018]{Zhang_Pan_Gao2018}
Zhang, B., Pan, G., and Gao, J. (2018).
\newblock \textsc{CLT} for largest eigenvalues and unit root testing for
  high-dimensional nonstationary time series.
\newblock {\em The Annals of Statistics}, 46:2186--2215.

\bibitem[Zhou et~al., 2018]{Zhou_Bose_Fan_Liu2018}
Zhou, W.-X., Bose, K., Fan, J., and Liu, H. (2018).
\newblock A new perspective on robust \textsc{M}-estimation: \textsc{F}inite
  sample theory and applications to dependence-adjusted multiple testing.
\newblock {\em The Annals of Statistics}, 46:1904--1931.

\bibitem[Zhu et~al., 2017]{Zhu_Pan_Li_Liu_Wang2017}
Zhu, X., Pan, R., Li, G., Liu, Y., and Wang, H. (2017).
\newblock Network vector autoregression.
\newblock {\em The Annals of Statistics}, 45:1096--1123.

\end{thebibliography}


\begin{thebibliography}{8}
\bibitem[Basu and Michailidis, 2015]{Basu_Michailidis2015}
Basu, S. and Michailidis, G. (2015).
\newblock Regularized estimation in sparse high-dimensional time series models.
\newblock {\em The Annals of Statistics}, 43:1535--1567.

\bibitem[Boucheron et~al., 2013]{Boucheron_Lugosi_Massart2013}
Boucheron, S., Lugosi, G., and Massart, P. (2013).
\newblock {\em Concentration inequalities: A nonasymptotic theory of
	independence}.
\newblock Oxford University Press, Oxford.

\bibitem[Horn and Johnson, 1985]{Horn_Johnson1985}
Horn, R.~A. and Johnson, C.~R. (1985).
\newblock {\em Matrix Analysis}.
\newblock Cambridge University Press, New York.

\bibitem[L\"{u}tkepohl, 2005]{Lutkepohl2005}
L\"{u}tkepohl, H. (2005).
\newblock {\em New Introduction to Multiple Time Series Analysis}.
\newblock Springer-Verlag Berlin Heidelberg.

\bibitem[Rudelson and Vershynin, 2015]{Rudelson_Vershynin2015}
Rudelson, M. and Vershynin, R. (2015).
\newblock Small ball probabilities for linear images of high-dimensional
distributions.
\newblock {\em International Mathematics Research Notices}, 2015:9594--9617.

\bibitem[Simchowitz et~al., 2018]{Simchowitz2018}
Simchowitz, M., Mania, H., Tu, S., Jordan, M., and Recht, B. (2018).
\newblock Learning without mixing: \textsc{T}owards a sharp analysis of linear
system identification.
\newblock In {\em Proceedings of Machine Learning Research}, volume~75, pages
439--473.
\newblock 31st Annual Conference on Learning Theory.

\bibitem[Vershynin, 2012]{Vershynin2012}
Vershynin, R. (2012).
\newblock Introduction to the nonasymptotic analysis of random matrices.
\newblock In Eldar, Y. and Kutyniok, G., editors, {\em Compressed Sensing:
	Theory and Applications}, chapter~5, pages 210--268. Cambridge University
Press.

\bibitem[Vershynin, 2018]{Vershynin2018}
Vershynin, R. (2018).
\newblock {\em High-Dimensional Probability: An Introduction with Applications
	in Data Science}.
\newblock Cambridge University Press, Cambridge.

\end{thebibliography}

\clearpage


\newpage
\begin{center}
	{\Large \bf Supplementary Material: Finite Time Analysis of Vector Autoregressive Models under Linear Restrictions}
\end{center}

\begin{abstract}
	This supplementary material contains all technical proofs of the main paper. $\S$ \ref{sec:S1} gives the proofs of Theorem \ref{thm1} and Proposition \ref{prop1}, which rely on three auxiliary lemmas, Lemmas \ref{lem_A1}--\ref{lem_A3}, whose proofs are relegated to $\S$ \ref{subsec:lem_A_proofs}. $\S$ \ref{sec:S2} contains the proofs of Lemmas \ref{lem_bmsb}--\ref{lem_gram}. In $\S$ \ref{sec:S3}, we first verify equation \eqref{logdet} in the main paper and then prove Proposition \ref{prop_G} through four auxiliary lemmas,  Lemmas \ref{lem_kappa}--\ref{lem_Sigma_X}. $\S$ \ref{sec:S4} contains the proof of Theorem \ref{thm3}. Lastly $\S$ \ref{sec:S5}  proves Theorem \ref{thm_lower} and Corollary \ref{cor_lower} after introducing two auxiliary lemmas, Lemmas \ref{lem_A4} and \ref{lem_A5}.
\end{abstract}

\renewcommand{\thesection}{S\arabic{section}}
\renewcommand{\theequation}{S\arabic{equation}}
\renewcommand{\thelemma}{S\arabic{lemma}}
\setcounter{section}{0}
\setcounter{equation}{0}
\setcounter{lemma}{0}
\setcounter{figure}{0}
\setcounter{table}{0}

\section{Proofs of Theorem \ref{thm1} and Proposition \ref{prop1}\label{sec:S1}}
\subsection{Three Auxiliary Lemmas}
The proofs of Theorem \ref{thm1} and Proposition \ref{prop1} rely on three auxiliary lemmas, Lemmas \ref{lem_A1}--\ref{lem_A3}. Lemma \ref{lem_A1} contains key results on covering and discretization.  Lemma \ref{lem_A2} gives a poinwise lower bound of $X^\T X$ through aggregation of all the $\lfloor n/k \rfloor$ blocks of size $k$ using the Chernoff bound. Notice that the probability guarantee in Lemma \ref{lem_A2} will degrade as $k$ increases, since the probability guarantee of the Chernoff bound will degrade as the number of blocks decreases. Lemma \ref{lem_A3} is a multivariate concentration bound for dependent data, respectively.  We state these lemmas first and relegate their proofs to $\S$ \ref{subsec:lem_A_proofs}.

The following notations will be used throughout our proofs: For any integer $d\geq1$ and matrix $0\prec\Gamma\in\mathbb{R}^{d\times d}$, let $\lVert \Gamma^{1/2}(\cdot)\rVert$ be the ellipsoidal vector norm associated to $\Gamma$, i.e., the mapping from  $\omega\in\mathbb{R}^d$ to  $(\omega^\T\Gamma\omega)^{1/2}\in(0,\infty)$. In addition, we denote the corresponding unit ball, or ellipsoid, by $\mathcal{S}_{\Gamma}=\{\omega\in\mathbb{R}^d: \lVert \Gamma^{1/2}\omega\rVert=1\}$. For any set $\mathcal{S}$, we denote its  cardinality, complement and volume  by $\vert \mathcal{S} \vert$, $\mathcal{S}^c$ and $\text{vol}(\mathcal{S})$, respectively.

\begin{lemma}\label{lem_A1}
	Suppose that $Z\in\mathbb{R}^{n\times m}$ and $0\prec \Gamma_{\min} \preceq \Gamma_{\max} \in\mathbb{R}^{m\times m}$. Let $\mathcal{T}$ be a $1/4$-net of $\mathcal{S}_{\Gamma_{\min}}$ in the norm $\lVert \Gamma_{\max}^{1/2}(\cdot)\rVert$. Then, the following holds:
	
	(i) If $\Gamma_{\min}/2 \npreceq Z^\T Z \preceq  \Gamma_{\max}$, then 	$\inf_{\omega\in\mathcal{T}} \omega^\T Z^\T Z \omega < 1$.
	
	(ii) If $\mathcal{T}$ is a minimal $1/4$-net, then
	$\log \vert\mathcal{T} \vert \leq m\log9+(1/2)\log\det(\Gamma_{\max}\Gamma_{\min}^{-1})$.
	
	(iii) If $\Gamma_{\min} \preceq Z^\T Z\preceq \Gamma_{\max}$, then for any $\nu\in\mathbb{R}^{n}$, we have
	\begin{equation*}
	\sup_{\omega\in \mathcal{S}^{m-1}}\frac{\omega^\T Z^\T \nu}{\lVert Z \omega\rVert}=\sup_{\omega\in \mathcal{S}_{\Gamma_{\min}}}\frac{\omega^\T Z^\T \nu}{\lVert Z \omega\rVert} \leq 2 \max_{\omega\in\mathcal{T}}\frac{\omega^\T Z^\T \nu}{\lVert Z \omega\rVert}.
	\end{equation*}
\end{lemma}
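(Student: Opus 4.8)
The plan is to treat the three parts in sequence, each via an elementary geometric argument. For part (i): if $\Gamma_{\min}/2\npreceq Z^\T Z$, I would fix a witness $v\neq0$ with $v^\T Z^\T Z v<\tfrac12\,v^\T\Gamma_{\min}v$, rescale it so that $v^\T\Gamma_{\min}v=1$, i.e. $v\in\mathcal{S}_{\Gamma_{\min}}$, so that $\lVert Zv\rVert<1/\sqrt2$. Picking $\omega\in\mathcal{T}$ with $\lVert\Gamma_{\max}^{1/2}(v-\omega)\rVert\le\tfrac14$ and using $Z^\T Z\preceq\Gamma_{\max}$ gives $\lVert Z(v-\omega)\rVert\le\lVert\Gamma_{\max}^{1/2}(v-\omega)\rVert\le\tfrac14$, so by the triangle inequality $\lVert Z\omega\rVert\le\lVert Zv\rVert+\lVert Z(v-\omega)\rVert<1/\sqrt2+\tfrac14<1$; hence $\omega^\T Z^\T Z\omega=\lVert Z\omega\rVert^2<1$ and a fortiori $\inf_{\omega\in\mathcal{T}}\omega^\T Z^\T Z\omega<1$.

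For part (ii), I would replace the minimal net $\mathcal{T}$ by a maximal $\tfrac14$-separated subset $\mathcal{N}$ of $\mathcal{S}_{\Gamma_{\min}}$ in the norm $\lVert\Gamma_{\max}^{1/2}(\cdot)\rVert$; maximality makes $\mathcal{N}$ a $\tfrac14$-net, so $\lvert\mathcal{T}\rvert\le\lvert\mathcal{N}\rvert$. Writing $\mathcal{B}_\Gamma=\{\omega:\omega^\T\Gamma\omega\le1\}$, the open radius-$\tfrac18$ balls (in $\lVert\Gamma_{\max}^{1/2}(\cdot)\rVert$) centred at points of $\mathcal{N}$ are pairwise disjoint, and since $\Gamma_{\min}\preceq\Gamma_{\max}$ forces $\mathcal{B}_{\Gamma_{\max}}\subseteq\mathcal{B}_{\Gamma_{\min}}$, each such ball lies inside $(9/8)\,\mathcal{B}_{\Gamma_{\min}}$. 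A volume comparison, using that $\mathrm{vol}(\mathcal{B}_\Gamma)$ is proportional to $\det(\Gamma)^{-1/2}$, yields $\lvert\mathcal{N}\rvert\,8^{-m}\det(\Gamma_{\max})^{-1/2}\le(9/8)^m\det(\Gamma_{\min})^{-1/2}$, i.e. $\lvert\mathcal{T}\rvert\le\lvert\mathcal{N}\rvert\le 9^m\det(\Gamma_{\max}\Gamma_{\min}^{-1})^{1/2}$; taking logarithms gives the bound.

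For part (iii): the ratio $\omega\mapsto\omega^\T Z^\T\nu/\lVert Z\omega\rVert$ is invariant under positive rescaling of $\omega$, and $Z^\T Z\succeq\Gamma_{\min}\succ0$ makes it well defined for all $\omega\neq0$, so its suprema over $\mathcal{S}^{m-1}$ and over $\mathcal{S}_{\Gamma_{\min}}$ both equal $M:=\sup_{0\neq u\in\mathrm{range}(Z)}u^\T\nu/\lVert u\rVert$, which is the asserted identity. If $M=0$ the remaining inequality is trivial; otherwise I would take $\omega_0\in\mathcal{S}_{\Gamma_{\min}}$ attaining $M$ (by compactness) and $\omega\in\mathcal{T}$ with $\lVert\Gamma_{\max}^{1/2}(\omega_0-\omega)\rVert\le\tfrac14$. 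Then $\lVert Z\omega_0\rVert\ge\lVert\Gamma_{\min}^{1/2}\omega_0\rVert=1$ and $\lVert Z(\omega_0-\omega)\rVert\le\tfrac14\le\tfrac14\lVert Z\omega_0\rVert$, so $\tfrac34\lVert Z\omega_0\rVert\le\lVert Z\omega\rVert\le\tfrac54\lVert Z\omega_0\rVert$; moreover $(\omega_0-\omega)^\T Z^\T\nu\le M\lVert Z(\omega_0-\omega)\rVert\le\tfrac M4\lVert Z\omega_0\rVert$, which forces $\omega^\T Z^\T\nu=\omega_0^\T Z^\T\nu-(\omega_0-\omega)^\T Z^\T\nu\ge\tfrac34 M\lVert Z\omega_0\rVert>0$. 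Dividing by $\lVert Z\omega_0\rVert$,
\[ M\;\le\;\frac{\omega^\T Z^\T\nu}{\lVert Z\omega_0\rVert}+\frac M4\;\le\;\frac54\cdot\frac{\omega^\T Z^\T\nu}{\lVert Z\omega\rVert}+\frac M4\;\le\;\frac54\max_{\omega'\in\mathcal{T}}\frac{(\omega')^\T Z^\T\nu}{\lVert Z\omega'\rVert}+\frac M4, \]
so $M\le\tfrac53\max_{\omega'\in\mathcal{T}}(\omega')^\T Z^\T\nu/\lVert Z\omega'\rVert\le 2\max_{\omega'\in\mathcal{T}}(\omega')^\T Z^\T\nu/\lVert Z\omega'\rVert$.

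The main obstacle is part (iii): the usual discretization argument for a linear functional assumes a fixed norm in the denominator, whereas here the normalization $\lVert Z\omega\rVert$ itself varies over the net. The resolution I rely on is that, on $\mathcal{S}_{\Gamma_{\min}}$, the sandwich $\Gamma_{\min}\preceq Z^\T Z\preceq\Gamma_{\max}$ pins this normalization between $1$ and a fixed multiple of its value at the optimizing point, while the numerator mismatch is automatically controlled by $M$ times a small $Z$-distance; combining the two bounds yields a clean multiplicative loss of $5/3<2$. Part (ii) is a routine volumetric packing estimate, requiring only care with the two distinct ellipsoidal norms and with passing from the ellipsoidal sphere to its enclosing ball, and part (i) is a short triangle-inequality computation.
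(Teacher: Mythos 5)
Your proposal is correct, and it reaches all three claims by routes that differ in places from the paper's own proof. For (i) you argue directly: pick a violating direction $v\in\mathcal{S}_{\Gamma_{\min}}$ with $\lVert Zv\rVert<1/\surd 2$, move to a net point, and conclude by the triangle inequality; the paper instead proves the contrapositive, showing that $\inf_{\omega\in\mathcal{T}}\omega^\T Z^\T Z\omega\geq 1$ together with $Z^\T Z\preceq\Gamma_{\max}$ forces $\inf_{\omega\in\mathcal{S}_{\Gamma_{\min}}}\lVert Z\omega\rVert\geq 3/4$ and hence $Z^\T Z\succeq(9/16)\Gamma_{\min}$ — the same computation run in the opposite direction. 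For (ii) you use a packing argument (a maximal $1/4$-separated set, disjoint $\Gamma_{\max}$-balls of radius $1/8$ contained in the $\Gamma_{\min}$-ellipsoid inflated by $9/8$), whereas the paper uses the covering estimate via the Minkowski-sum inclusion of the ellipsoid plus a small ball inside the inflated ellipsoid; both are routine volumetric bounds and give the identical constant $9^m\{\det(\Gamma_{\max}\Gamma_{\min}^{-1})\}^{1/2}$. The genuine difference is (iii): the paper never fixes a maximizer — for an arbitrary $\omega\in\mathcal{S}_{\Gamma_{\min}}$ it writes $\phi(\omega)-\phi(\omega_0)=\lVert Zu\rVert\,\phi(u)$ with $u=\omega/\lVert Z\omega\rVert-\omega_0/\lVert Z\omega_0\rVert$, shows $\lVert Zu\rVert\leq 1/2$, and closes with the self-bounding step $\sup\phi\leq\max_{\mathcal{T}}\phi+(1/2)\sup\phi$, which yields the factor $2$. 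You instead take $\omega_0$ attaining the supremum $M$ (legitimate, by compactness of $\mathcal{S}_{\Gamma_{\min}}$ and positivity of $\lVert Z\cdot\rVert$ there), use $M$ itself to control the numerator perturbation via $u^\T\nu\leq M\lVert u\rVert$ on the range of $Z$, and use the sandwich $\Gamma_{\min}\preceq Z^\T Z\preceq\Gamma_{\max}$ to pin the denominators, arriving at the sharper constant $5/3\leq 2$. Your version is more elementary and gives a marginally better constant; the paper's version buys independence from any attainment argument, since it bounds $\phi$ pointwise and only then takes suprema. Both arguments are sound, and your handling of the degenerate case $M=0$ and of the positivity of $\omega^\T Z^\T\nu$ at the chosen net point (needed before dividing by $\lVert Z\omega\rVert$) is exactly the care the argument requires.
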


\begin{lemma}\label{lem_A2}
	Suppose that the process $\{X_t\}_{t=1}^n$ taking values in $\mathbb{R}^d$ satisfies the $(k,\Gamma_{\smb},\alpha)$-\BMSB\ condition. Let $X=(X_1,\ldots, X_n)^\T$. Then, for any $\omega\in\mathbb{R}^{d}$, we have
	\[
	\P \left(\omega^\T X^\T X \omega \leq  \frac{\alpha^2 k \lfloor n/k\rfloor}{8}  \omega^\T\Gamma_{\smb}\omega \right ) \leq \exp\left (-\frac{\alpha^2 \lfloor n/k \rfloor}{8}\right ).
	\]	
\end{lemma}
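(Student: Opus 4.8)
The plan is to run a single Chernoff (exponential-tilt) argument on the block structure underlying the \BMSB\ condition, choosing the tilt so that the stated constants come out exactly.

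Fix $\omega\in\mathbb{R}^d$ and set $\nu=(\omega^\T\Gamma_{\smb}\omega)^{1/2}$ and $p=\lfloor n/k\rfloor$; we may assume $\nu>0$ and $p\geq 1$, since otherwise the asserted bound is trivial. First I would pass to the real-valued process $\{\omega^\T X_t\}$, which by Definition \ref{def:BMSB}(ii) satisfies the $(k,\nu,\alpha)$-\BMSB\ condition with respect to $\{\mathcal{F}_t\}$. Keeping only the first $kp$ summands of $\omega^\T X^\T X\omega=\sum_{t=1}^{n}(\omega^\T X_t)^2$, partitioning $\{1,\dots,kp\}$ into the consecutive size-$k$ blocks $\mathcal{I}_j=\{(j-1)k+1,\dots,jk\}$ for $j=1,\dots,p$, and using $(\omega^\T X_t)^2\geq\nu^2\mathbf{1}\{|\omega^\T X_t|\geq\nu\}$ inside each block, I obtain
\[
\omega^\T X^\T X\omega\ \geq\ \nu^2\sum_{j=1}^{p}N_j,\qquad N_j:=\sum_{t\in\mathcal{I}_j}\mathbf{1}\{|\omega^\T X_t|\geq\nu\}\in[0,k].
\]
Applying the defining \BMSB\ inequality at $s=(j-1)k$ then gives $E(N_j\mid\mathcal{F}_{(j-1)k})=\sum_{t\in\mathcal{I}_j}\P(|\omega^\T X_t|\geq\nu\mid\mathcal{F}_{(j-1)k})\geq k\alpha$ almost surely.

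Next I would bound the lower tail of $\sum_{j}N_j$ by an exponential-moment computation. Since $x\mapsto e^{-\lambda x}$ is convex it lies below its chord on $[0,k]$, so $e^{-\lambda N_j}\leq 1-k^{-1}(1-e^{-\lambda k})N_j$; taking conditional expectation and using $E(N_j\mid\mathcal{F}_{(j-1)k})\geq k\alpha$ together with $1-x\leq e^{-x}$ yields $E(e^{-\lambda N_j}\mid\mathcal{F}_{(j-1)k})\leq e^{-\alpha(1-e^{-\lambda k})}$. Because $\{X_t\}$ is adapted to $\{\mathcal{F}_t\}$, the product $\prod_{i<j}e^{-\lambda N_i}$ is $\mathcal{F}_{(j-1)k}$-measurable, so iterating the tower property over $j=p,p-1,\dots,1$ gives $E(e^{-\lambda\sum_{j=1}^{p}N_j})\leq e^{-p\alpha(1-e^{-\lambda k})}$, whence by Markov's inequality, for every $\lambda>0$ and $s\geq 0$,
\[
\P\Big(\textstyle\sum_{j=1}^{p}N_j\leq s\Big)\ \leq\ \exp\big\{\lambda s-p\alpha(1-e^{-\lambda k})\big\}.
\]
I would then take $\lambda=1/k$ and $s=\alpha^2 kp/8$, making the right-hand side $\exp\{\alpha^2 p/8-p\alpha(1-e^{-1})\}$; since $\alpha\in(0,1)$ and $1-e^{-1}>1/4$ we have $\alpha(1-e^{-1})\geq\alpha^2(1-e^{-1})>\alpha^2/4$, so the exponent is at most $-\alpha^2 p/8$. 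Combining this with the displayed lower bound and $\nu^2=\omega^\T\Gamma_{\smb}\omega>0$ gives $\P(\omega^\T X^\T X\omega\leq\frac{\alpha^2 kp}{8}\omega^\T\Gamma_{\smb}\omega)\leq\P(\sum_{j}N_j\leq\alpha^2 kp/8)\leq e^{-\alpha^2 p/8}$, which is the claim with $p=\lfloor n/k\rfloor$.

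The convexity estimate and the tower-property iteration are routine; the only point needing care is the numerology, namely checking that the single tilt $\lambda=1/k$ — rather than the alternative two-step route of a reverse-Markov bound on $N_j$ followed by a Chernoff bound on the resulting indicators, which would degrade the constants — reproduces precisely the thresholds $\alpha^2 k\lfloor n/k\rfloor/8$ and $\exp(-\alpha^2\lfloor n/k\rfloor/8)$ uniformly over $\alpha\in(0,1)$; this reduces to the elementary inequality $1-e^{-1}\geq\alpha/4$.
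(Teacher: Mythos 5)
Your proof is correct, and it follows essentially the same route as the paper: the paper disposes of this lemma by citing Proposition 2.5 of \cite{Simchowitz2018}, whose argument is exactly the block-aggregation Chernoff bound you carry out (lower-bounding each block's quadratic contribution by $\nu^2 N_j$, using the \BMSB\ condition to get $E(N_j\mid\mathcal{F}_{(j-1)k})\geq k\alpha$, and tilting with $\lambda=1/k$ via the tower property). Your verification that the tilt reproduces the constants $\alpha^2 k\lfloor n/k\rfloor/8$ and $\exp(-\alpha^2\lfloor n/k\rfloor/8)$, reducing to $1-e^{-1}\geq \alpha/4$ for $\alpha\in(0,1)$, is sound, so the writeup is a complete self-contained substitute for the citation.
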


\begin{lemma}\label{lem_A3} Let $\{\mathcal{F}_t, t=1,2,\ldots\}$ be a filtration. Suppose that $\{x_t, t=1,2,\ldots\}$ and $\{\eta_t, t=1,2,\ldots\}$ are processes taking values in $\mathbb{R}^q$, and for each integer $t\geq1$, $x_t$ is $\mathcal{F}_t$-measurable, $\eta_t$ is $\mathcal{F}_{t+1}$-measurable, and $\eta_t\mid \mathcal{F}_t$ is mean-zero and $\sigma^2$-sub-Gaussian. Then, for any constants $\beta_-, \beta_+, \gamma>0$, we have 
	\begin{equation}\label{eq:lem_martingale}
	\P\left \{ \frac{ \sum_{t=1}^{n}x_t^\T \eta_t }{\left (\sum_{t=1}^{n}\lVert x_t \rVert^2\right )^{1/2}}\geq\gamma, \; \sum_{t=1}^{n}\lVert x_t \rVert^2 \in [\beta_-, \beta_+]\right \} \leq \frac{\beta_+}{\beta_-} \exp\left (-\frac{\gamma^2}{6\sigma^2}\right ).
	\end{equation}
\end{lemma}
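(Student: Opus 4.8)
The plan is to prove \eqref{eq:lem_martingale} by combining the standard exponential supermartingale for sub-Gaussian martingale differences with a geometric \emph{stratification} (``peeling'') of the range $[\beta_-,\beta_+]$ of the random normalizer $V:=\sum_{t=1}^{n}\lVert x_t\rVert^2$. Throughout write $S:=\sum_{t=1}^{n}x_t^\T\eta_t$ and assume $\beta_-\le\beta_+$, since otherwise the event in \eqref{eq:lem_martingale} is empty.

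First I would build the supermartingale. For any fixed $\lambda>0$, set $M_0=1$ and $M_t=\exp\{\lambda\sum_{s=1}^{t}x_s^\T\eta_s-(\lambda^2\sigma^2/2)\sum_{s=1}^{t}\lVert x_s\rVert^2\}$ for $t=1,\dots,n$. Because $x_t$ is $\mathcal F_t$-measurable and $\eta_t\mid\mathcal F_t$ is mean-zero and $\sigma^2$-sub-Gaussian, applying the one-sided sub-Gaussian bound to the ($\mathcal F_t$-measurable) vector $\lambda x_t$ gives $E\{\exp(\lambda x_t^\T\eta_t)\mid\mathcal F_t\}\le\exp\{(\lambda^2\sigma^2/2)\lVert x_t\rVert^2\}$, hence $E(M_t\mid\mathcal F_t)\le M_{t-1}$ and therefore $E(M_n)\le1$.

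Next I would treat a single block. Fix $a>0$ and let $\mathcal E_a=\{S\ge\gamma\sqrt V,\ V\in[a,ea]\}$. On $\mathcal E_a$ one has $\lambda S\ge\lambda\gamma\sqrt a$ while $(\lambda^2\sigma^2/2)V\le(\lambda^2\sigma^2/2)ea$, so $M_n\ge\exp\{\lambda\gamma\sqrt a-(\lambda^2\sigma^2 ea/2)\}$; Markov's inequality together with $E(M_n)\le1$ then yields $\P(\mathcal E_a)\le\exp\{-\lambda\gamma\sqrt a+(\lambda^2\sigma^2 ea/2)\}$ for every $\lambda>0$, and the choice $\lambda=\gamma/(e\sqrt a\,\sigma^2)$ produces the $a$-free bound $\P(\mathcal E_a)\le\exp\{-\gamma^2/(2e\sigma^2)\}$. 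Finally I would cover $[\beta_-,\beta_+]$ by the $N:=\max\{1,\lceil\ln(\beta_+/\beta_-)\rceil\}$ blocks $[\beta_-e^{\,j-1},\beta_-e^{\,j}]$, $j=1,\dots,N$, so that the event in \eqref{eq:lem_martingale} lies in $\bigcup_{j=1}^{N}\mathcal E_{\beta_-e^{\,j-1}}$; a union bound gives probability at most $N\exp\{-\gamma^2/(2e\sigma^2)\}$, and \eqref{eq:lem_martingale} follows from the elementary inequalities $N\le\ln(\beta_+/\beta_-)+1\le\beta_+/\beta_-$ and $2e<6$.

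The one genuine obstacle is the self-normalized form: a single application of the exponential bound (one value of $\lambda$) cannot give the stated inequality, because the optimal tilt $\lambda$ depends on the realized—hence random—value of $V$. The geometric partition is exactly what resolves this, forcing the ratio of the endpoints of each block to be the constant $e$ at the cost of the multiplicative prefactor $\beta_+/\beta_-$; everything else (the supermartingale property, which merely uses the predictability of $x_t$ relative to $\eta_t$, and the single-block optimization) is routine.
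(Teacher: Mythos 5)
Your proof is correct and follows essentially the same route as the paper's: the paper simply defers to the peeling-plus-exponential-supermartingale argument behind Lemma 4.2 of Simchowitz et al.\ (2018), which stratifies $\sum_{t=1}^{n}\lVert x_t\rVert^2$ into geometrically spaced blocks, applies a Chernoff/supermartingale bound with an optimized tilt on each block, and then relaxes the resulting logarithmic prefactor to $\beta_+/\beta_-$ exactly as you do via $N\le \ln(\beta_+/\beta_-)+1\le\beta_+/\beta_-$. Your only deviations are cosmetic: you take block ratio $e$ rather than the ratio implicit in the paper's constant (yielding the slightly sharper per-block exponent $\gamma^2/(2e\sigma^2)\ge\gamma^2/(6\sigma^2)$), and you write out the supermartingale step in full instead of citing the earlier lemma.
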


\subsection{Proof of Theorem \ref{thm1}}
Define the $m\times m$ matrices 
\begin{equation}\label{eq:Gammas}
\Gamma_{\max}=n\overline{\Gamma}_R, \hspace{5mm} \Gamma_{\min}=\frac{\alpha^2 k \lfloor n/k\rfloor}{8}\underline{\Gamma}_{R}, \hspace{5mm}	\underline{\Gamma}_{\min}=\Gamma_{\min}/2,
\end{equation}
where
$\overline{\Gamma}_R=R^\T (I_q \otimes \overline{\Gamma}) R$ and $\underline{\Gamma}_{R}=R^\T (I_q \otimes \Gamma_{\smb}) R$. Since $R$ has full column rank, $\overline{\Gamma}_R$ and $\underline{\Gamma}_{R}$ are both  positive definite matrices. Thus, $0\prec \underline{\Gamma}_{\min} \prec \Gamma_{\min} \preceq \Gamma_{\max}$. 

Consider the singular value decomposition
$Z=\mathcal{U} \mathcal{D} \mathcal{V}^\T$,
where $\mathcal{U}\in\mathbb{R}^{q n\times m}$, $\mathcal{D}, \mathcal{V} \in\mathbb{R}^{m\times m}$, and $\mathcal{U}^\T \mathcal{U}=I_m=\mathcal{V}^\T\mathcal{V}$. Let $Z^\dagger$ be the Moore-Penrose pseudoinverse of $Z$, i.e., $Z^\dagger=\mathcal{V} \mathcal{D}^{-}\mathcal{U}^\T$, where the diagonal matrix $\mathcal{D}^{-}$ is defined by taking the reciprocal of each nonzero diagonal entry of $\mathcal{D}$; in particular, $Z^\dagger=(Z^\T Z)^{-1}Z^\T$ if $Z^\T Z\succ 0$. Then, we have $\widehat{\theta}-\theta_*=Z^\dagger\eta$. As a result,
\begin{equation*}
\widehat{\beta} - \beta_*=R (\widehat{\theta}-\theta_*)=RZ^\dagger\eta=R\mathcal{V} \mathcal{D}^{-}\mathcal{U}^\T\eta.
\end{equation*}
Furthermore, since $\underline{\Gamma}_{\min}\succ0$, it holds  on the event $\{Z^\T Z \succeq \underline{\Gamma}_{\min}\}$ that
\begin{align}\label{thm1_decompose}
\lVert\widehat{\beta} - \beta_* \rVert
\leq \lVert R\mathcal{V} \mathcal{D}^{-}\rVert_2\lVert\mathcal{U}^\T\eta\rVert &= \left [\lambda_{\max}\{R (Z^\T Z)^{-1} R^\T\}\right ]^{1/2}\lVert\mathcal{U}^\T\eta\rVert \notag \\
&\leq \left \{\lambda_{\max}(R \underline{\Gamma}_{\min}^{-1} R^\T)\right \}^{1/2}\lVert\mathcal{U}^\T\eta\rVert.
\end{align}
Note that \eqref{thm1_decompose} exploits the self-cancellation effect inside the pseudoinverse $Z^\dagger$: the bound would not be as sharp if  $R$, $Z^\T Z$ and $Z\eta$ were bounded separately.

By \eqref{thm1_decompose} and Assumption \ref{assum:upper_matrix}, i.e., $\P(Z^\T Z \npreceq \Gamma_{\max})\leq \delta$, for any $K>0$, we have
\begin{align}\label{eq:thm1_aeq1}
& \P \left [\lVert \widehat{\beta} - \beta_*\rVert 
\geq K \left \{\lambda_{\max}(R \underline{\Gamma}_{\min}^{-1} R^\T)\right \}^{1/2} \right] \notag\\
&\hspace{5mm} 
\leq \P \left [\lVert \widehat{\beta} - \beta_*\rVert   \geq K \left \{\lambda_{\max}(R \underline{\Gamma}_{\min}^{-1} R^\T)\right \}^{1/2}, \; Z^\T Z \preceq \Gamma_{\max} \right] + \delta \notag\\
&\hspace{5mm} 
\leq \P \left [ \lVert \widehat{\beta} - \beta_*\rVert   \geq K \left \{\lambda_{\max}(R \underline{\Gamma}_{\min}^{-1} R^\T)\right \}^{1/2}, \;  \underline{\Gamma}_{\min} \preceq Z^\T Z \preceq \Gamma_{\max}\right ] \notag\\
& \hspace{10mm} +\P \left ( \underline{\Gamma}_{\min} \npreceq Z^\T Z \preceq \Gamma_{\max}\right )+\delta \notag\\
&\hspace{5mm} 
\leq \P \left ( \lVert \mathcal{U}^\T \eta \rVert \geq K, \; \underline{\Gamma}_{\min} \preceq Z^\T Z \preceq \Gamma_{\max}\right) +\P \left ( \underline{\Gamma}_{\min} \npreceq Z^\T Z \preceq \Gamma_{\max}\right )+\delta.
\end{align}

Notice that condition \eqref{thm1_condition} implies $k \leq n/10$, so that 
\begin{equation}\label{eq:floor_T}
k \lfloor n/k\rfloor \geq n-k \geq (9/10) n.
\end{equation}
As a result, 
\begin{equation}\label{eq:thm1_aeq2}
\left \{\lambda_{\max}(R \underline{\Gamma}_{\min}^{-1} R^\T)\right \}^{1/2}\leq \frac{9}{2\alpha} \left \{\frac{\lambda_{\max}(R \underline{\Gamma}_R^{-1} R^\T)}{n}\right \}^{1/2}.
\end{equation}
In view of \eqref{eq:thm1_aeq1} and \eqref{eq:thm1_aeq2}, to prove this theorem, it remains to show that $Z^\T Z$ is bounded below and $\lVert \mathcal{U}^\T \eta \rVert$ is bounded above, with high probability. Specifically, we will prove that  
\begin{equation}\label{thm1_part1}
\P \left ( \underline{\Gamma}_{\min} \npreceq Z^\T Z \preceq \Gamma_{\max}\right ) \leq \delta
\end{equation}
if condition \eqref{thm1_condition} of the theorem holds, and
\begin{equation}\label{thm1_part2}
\P \left ( \lVert \mathcal{U}^\T \eta \rVert \geq K, \; \underline{\Gamma}_{\min} \preceq Z^\T Z \preceq \Gamma_{\max}\right) \leq \delta
\end{equation}
if we choose
\begin{equation*}
K=2\sigma \left \{12m \log(14/\alpha)+9\log\det(\overline{\Gamma}_R\underline{\Gamma}_{R}^{-1}) +6\log(1/\delta)\right \}^{1/2}.
\end{equation*}

\noindent{\textit{Proof of \eqref{thm1_part1}}:} 
Let $\mathcal{T}$ be a minimal $1/4$-net of $\mathcal{S}_{\Gamma_{\min}}$ in the norm $\lVert \Gamma_{\max}^{1/2}(\cdot)\rVert$. By Lemma \ref{lem_A1}(i), we have 
\begin{equation}\label{eq:thm1_covering}
\P \left (\Gamma_{\min}/2 \npreceq Z^\T Z \preceq  \Gamma_{\max} \right ) 
\leq \P\left( \inf_{\omega\in\mathcal{T}} \omega^\T Z^\T Z \omega < 1\right)
\leq \vert \mathcal{T} \vert \sup_{\omega\in\mathcal{S}_{\Gamma_{\min}}} \P\left(\omega^\T Z^\T Z \omega < 1 \right).
\end{equation}

By Lemma \ref{lem_A1}(ii) and \eqref{eq:floor_T}, we have
\begin{align}\label{eq:thm1_covernum_T}
\log \vert\mathcal{T} \vert &\leq m\log9+(1/2)\log\det(\Gamma_{\max}\Gamma_{\min}^{-1}) \notag\\
&=m\log9+(1/2)m \log\frac{8n}{k\lfloor n/k\rfloor \alpha^2}+(1/2)\log\det(\overline{\Gamma}_R\underline{\Gamma}_{R}^{-1}) \notag\\
&\leq m \log(27/\alpha)+(1/2)\log\det(\overline{\Gamma}_R\underline{\Gamma}_{R}^{-1}).
\end{align}

Note that $Z^\T Z=R^\T (I_q \otimes X^\T X) R=\sum_{i=1}^q R_i^\T X^\T X R_i$, where each $R_i$ is a $d\times m$ block in $R=(R_1^\T, \ldots, R_q^\T)^\T$. Likewise,	
$\Gamma_{\min}=(1/8){\alpha^2 k \lfloor n/k\rfloor}\sum_{i=1}^q R_i^\T \Gamma_{\smb} R_i$. By a change of variables and Lemma \ref{lem_A2}, we have
\begin{align*}
\sup_{\omega\in\mathcal{S}_{\Gamma_{\min}}}\P\left(\omega^\T Z^\T Z \omega < 1 \right) &= \sup_{\omega\in\mathbb{R}^m}\P\left(\omega^\T Z^\T Z \omega < \omega^\T \Gamma_{\min} \omega\right)\\
&=\sup_{\omega\in\mathbb{R}^m}\P\left(\sum_{i=1}^q \omega^\T R_i^\T X^\T X R_i \omega < \frac{\alpha^2 k \lfloor n/k\rfloor}{8} \sum_{i=1}^q \omega^\T R_i^\T   \Gamma_{\smb} R_i \omega \right)\\
&\leq \sum_{i=1}^q \sup_{\omega\in\mathbb{R}^m} \P\left(\omega^\T R_i^\T X^\T X R_i \omega < \frac{\alpha^2 k \lfloor n/k\rfloor}{8} \omega^\T R_i^\T   \Gamma_{\smb} R_i \omega \right)\\
&\leq q \exp\left (-\frac{\alpha^2 \lfloor n/k \rfloor}{8}\right ) \leq q \exp\left (-\frac{\alpha^2 n}{9k}\right ),
\end{align*}
where we used \eqref{eq:floor_T} again in the last inequality.
This, together with \eqref{eq:thm1_covering} and \eqref{eq:thm1_covernum_T}, yields
\[
\P \left (\Gamma_{\min}/2 \npreceq Z^\T Z \preceq  \Gamma_{\max} \right ) \leq \exp\left \{ m \log\frac{27}{\alpha}+\frac{1}{2}\log\det(\overline{\Gamma}_R\underline{\Gamma}_{R}^{-1})+\log q-\frac{\alpha^2 n}{9k}\right \}\leq \delta,
\]
as long as condition \eqref{thm1_condition} of the theorem holds.

\noindent{\textit{Proof of  \eqref{thm1_part2}}:}
Recall that $Z=\mathcal{U}\mathcal{D}\mathcal{V}^\T$ and $\mathcal{U}^\T\mathcal{U}=I_m$. Thus, on the event $\{\underline{\Gamma}_{\min} \preceq Z^\T Z \preceq \Gamma_{\max}\}$, we have
\begin{align*}
\lVert \mathcal{U}^\T \eta \rVert = \sup_{\omega\in \mathbb{R}^{m}\setminus\{0\}} \frac{\omega^\T \mathcal{U}^\T \eta}{\lVert\omega\rVert}
= \sup_{\omega\in \mathbb{R}^{m}\setminus\{0\}} \frac{\omega^\T \underline{\Gamma}_{\min}^{-1/2}\mathcal{V} \mathcal{D} \mathcal{U}^\T \eta}{\lVert \mathcal{D}\mathcal{V}^\T\underline{\Gamma}_{\min}^{-1/2}\omega\rVert}
&= \sup_{\omega\in \mathbb{R}^{m}\setminus\{0\}} \frac{\omega^\T \underline{\Gamma}_{\min}^{-1/2}Z^\T \eta}{\lVert Z\underline{\Gamma}_{\min}^{-1/2}\omega\rVert} \\
&= \sup_{\omega\in \mathcal{S}_{\underline{\Gamma}_{\min}}} \frac{\omega^\T Z^\T \eta}{\lVert Z\omega\rVert},
\end{align*}
where the second equality uses the fact that $\mathcal{D}\mathcal{V}^\T\underline{\Gamma}_{\min}^{-1/2}$ is nonsingular if $Z^\T Z \succeq \underline{\Gamma}_{\min} \succ0$. Then it follows from Lemma \ref{lem_A1}(iii) that, on the event $\{\underline{\Gamma}_{\min} \preceq Z^\T Z \preceq \Gamma_{\max}\}$, we have
\begin{equation*}
\lVert \mathcal{U}^\T \eta \rVert  \leq 2\max_{\omega\in \underline{\mathcal{T}}} \frac{\omega^\T Z^\T \eta}{\lVert Z\omega\rVert},
\end{equation*}
where $\underline{\mathcal{T}}$ is a $1/4$-net of $\mathcal{S}_{\underline{\Gamma}_{\min}}$ in the norm $\lVert \Gamma_{\max}^{1/2}(\cdot)\rVert$. Therefore,
\begin{align}\label{eq:thm1_discrete}
\begin{split}
&\P \left ( \lVert \mathcal{U}^\T \eta \rVert \geq K, \; \underline{\Gamma}_{\min} \preceq Z^\T Z \preceq \Gamma_{\max}\right)\\
&\hspace{5mm} \leq \P \left ( \max_{\omega\in \underline{\mathcal{T}}} \frac{\omega^\T Z^\T \eta}{\lVert Z\omega\rVert} \geq K/2, \; \underline{\Gamma}_{\min} \preceq Z^\T Z \preceq \Gamma_{\max}\right)\\ 
&\hspace{5mm} \leq \vert\underline{\mathcal{T}}\vert 
\sup_{\omega\in \mathcal{S}_{\underline{\Gamma}_{\min}}}
\P \left ( \frac{\omega^\T Z^\T \eta}{\lVert Z\omega\rVert} \geq K/2, \; \underline{\Gamma}_{\min} \preceq Z^\T Z \preceq \Gamma_{\max} \right )\\
&\hspace{5mm} = \vert\underline{\mathcal{T}}\vert 
\sup_{\omega\in \mathcal{S}^{m-1}}
\P \left (\frac{\omega^\T \underline{\Gamma}_{\min}^{-1/2} Z^\T \eta}{\lVert Z \underline{\Gamma}_{\min}^{-1/2} \omega\rVert} \geq K/2, 
\; I_d \preceq \underline{\Gamma}_{\min}^{-1/2}Z^\T Z \underline{\Gamma}_{\min}^{-1/2} \preceq \underline{\Gamma}_{\min}^{-1/2} \Gamma_{\max} \underline{\Gamma}_{\min}^{-1/2} \right )\\
&\hspace{5mm} \leq \vert\underline{\mathcal{T}}\vert 
\sup_{\omega\in \mathcal{S}^{m-1}}
\P \left \{\frac{\omega^\T \underline{\Gamma}_{\min}^{-1/2} Z^\T \eta}{\lVert Z \underline{\Gamma}_{\min}^{-1/2} \omega\rVert} \geq K/2, 
\; 1 \leq \lVert Z\underline{\Gamma}_{\min}^{-1/2}\omega \rVert^2 \leq \lambda_{\max}(\underline{\Gamma}_{\min}^{-1/2} \Gamma_{\max} \underline{\Gamma}_{\min}^{-1/2})
\right \}.
\end{split}
\end{align}

Similarly to \eqref{eq:thm1_covernum_T}, we can show that
\begin{equation}\label{eq:thm1_covernum_T2}
\log \vert\underline{\mathcal{T}} \vert 
\leq m\log9+(1/2)\log\det(\Gamma_{\max}\underline{\Gamma}_{\min}^{-1})
\leq m \log(38/\alpha)+(1/2)\log\det(\overline{\Gamma}_R\underline{\Gamma}_{R}^{-1}).
\end{equation}

Now it remains to derive a pointwise upper bound on the probability in \eqref{eq:thm1_discrete} for any fixed $\omega\in \mathcal{S}^{m-1}$. Let $\eta_{i,t}$ be the $i$th element of $\eta_t$, and denote \[\eta_{(i)}=(\eta_{i,1},\ldots, \eta_{i,n})^\T.\] 
Note that $\eta=(\eta_{(1)}^\T, \ldots, \eta_{(q)}^\T)^\T$.
Fixing $\omega\in \mathcal{S}^{m-1}$, define $x_t=(x_{1,t},\ldots, x_{q,t})^\T$, where $x_{i,t}=X_t^\T R_i\underline{\Gamma}_{\min}^{-1/2}\omega$, and denote
\[
x_{(i)}=(x_{i,1},\ldots, x_{i,n})^\T=XR_i\underline{\Gamma}_{\min}^{-1/2}\omega.
\] 
Then, we have $\omega^\T \underline{\Gamma}_{\min}^{-1/2} Z^\T = \omega^\T \underline{\Gamma}_{\min}^{-1/2}R^\T (I_q \otimes X^\T)=(x_{(1)}^\T, \ldots, x_{(q)}^\T)$.
As a result,
\[\omega^\T \underline{\Gamma}_{\min}^{-1/2} Z^\T \eta= \sum_{i=1}^q x_{(i)}^\T\eta_{(i)}=\sum_{i=1}^q\sum_{t=1}^{n}x_{i,t} \eta_{i,t}=\sum_{t=1}^{n}x_t^\T \eta_t\]
and
\[\lVert Z \underline{\Gamma}_{\min}^{-1/2} \omega\rVert^2=\sum_{i=1}^q \lVert x_{(i)}\rVert^2=\sum_{t=1}^{n}\lVert x_t \rVert^2.\]
Applying Lemma \ref{lem_A3} to $\{x_t\}$ and $\{\eta_t\}$, with $\beta_-=1$ and $\beta_+=\lambda_{\max}(\underline{\Gamma}_{\min}^{-1/2} \Gamma_{\max} \underline{\Gamma}_{\min}^{-1/2})$, we have
\begin{align}\label{eq:thm1_eq1}
&\P \left \{\frac{\omega^\T \underline{\Gamma}_{\min}^{-1/2} Z^\T \eta}{\lVert Z \underline{\Gamma}_{\min}^{-1/2} \omega\rVert} \geq K/2, 
\; 1 \leq \lVert Z\underline{\Gamma}_{\min}^{-1/2}\omega \rVert^2 \leq \lambda_{\max}(\underline{\Gamma}_{\min}^{-1/2} \Gamma_{\max} \underline{\Gamma}_{\min}^{-1/2})
\right \} \notag\\
&\hspace{5mm}= \P\left \{ \frac{ \sum_{t=1}^{n}x_t^\T \eta_t }{\left (\sum_{t=1}^{n}\lVert x_t \rVert^2\right )^{1/2}}\geq K/2, \; \sum_{t=1}^{n}\lVert x_t \rVert^2 \in [\beta_-,\beta_+]\right \}
\leq \frac{\beta_+}{\beta_-} \exp(-\frac{K^2}{24\sigma^2}).
\end{align}
Moreover, by a method similar to that for \eqref{eq:thm1_covernum_T2}, we can show that
\begin{align}\label{eq:thm1_eq2}
\frac{\beta_+}{\beta_-} 
\leq \det(\underline{\Gamma}_{\min}^{-1/2} \Gamma_{\max} \underline{\Gamma}_{\min}^{-1/2})
= \det(\Gamma_{\max} \underline{\Gamma}_{\min}^{-1})\leq \exp\left \{ m \log\frac{9}{2\alpha}+\log\det(\overline{\Gamma}_R\underline{\Gamma}_{R}^{-1}) \right \}.
\end{align}

Combining \eqref{eq:thm1_discrete}--\eqref{eq:thm1_eq2}, we have
\[
\P \left ( \lVert \mathcal{U}^\T \eta \rVert \geq K, \; \underline{\Gamma}_{\min} \preceq Z^\T Z \preceq \Gamma_{\max}\right)
\leq \exp \left[ 2m \log\frac{14}{\alpha}+\frac{3}{2}\log\det(\overline{\Gamma}_R\underline{\Gamma}_{R}^{-1}) - \frac{K^2}{24\sigma^2}\right ] \leq \delta,
\]
if we choose $K$ as mentioned below \eqref{thm1_part2}. This completes the proof of this theorem.

\subsection{Proof of Proposition \ref{prop1}}
Define the matrices $\Gamma_{\max}, \Gamma_{\min}$ and $\underline{\Gamma}_{\min}$ as in \eqref{eq:Gammas}, and consider the singular value decomposition of $Z$ as in the proof of Theorem \ref{thm1}. Note that 
\begin{equation*}
\widehat{A} - A_*= \{I_q \otimes (\widehat{\theta}-\theta_*)^\T \} \widetilde{R}
= \{I_q \otimes (Z^\dagger\eta)^\T \} \widetilde{R}
= (I_q \otimes \eta^\T \mathcal{U})(I_q \otimes \mathcal{D}^{-}\mathcal{V}^\T) \widetilde{R}.
\end{equation*}
Since $\underline{\Gamma}_{\min}\succ0$, it holds  on the event $\{Z^\T Z \succeq \underline{\Gamma}_{\min}\}$ that
\begin{align*}
\lVert\widehat{A} - A_* \rVert_2
&\leq \lVert (I_q \otimes \mathcal{D}^{-}\mathcal{V}^\T) \widetilde{R}\rVert_2 \lVert\mathcal{U}^\T\eta\rVert
= \left(\lambda_{\max}[\widetilde{R}^\T \{I_q \otimes(Z^\T Z)^{-1}\} \widetilde{R}]\right )^{1/2}\lVert\mathcal{U}^\T\eta\rVert \\
&\leq \left[\lambda_{\max}\{\widetilde{R}^\T (I_q \otimes\underline{\Gamma}_{\min}^{-1}) \widetilde{R}\}\right]^{1/2}\lVert\mathcal{U}^\T\eta\rVert.
\end{align*}
Consequently, by a method similar to that for \eqref{eq:thm1_aeq1}, under Assumption \ref{assum:upper_matrix}, we can show that
\begin{align*}
& \P \left (\lVert\widehat{A} - A_* \rVert_2
\geq K \left[\lambda_{\max}\{\widetilde{R}^\T (I_q \otimes\underline{\Gamma}_{\min}^{-1}) \widetilde{R}\}\right]^{1/2} \right)\\
&\hspace{5mm} 
\leq \P \left ( \lVert \mathcal{U}^\T \eta \rVert \geq K, \; \underline{\Gamma}_{\min} \preceq Z^\T Z \preceq \Gamma_{\max}\right) +\P \left ( \underline{\Gamma}_{\min} \npreceq Z^\T Z \preceq \Gamma_{\max}\right )+\delta
\end{align*}
for any $K>0$. Moreover, similarly to \eqref{eq:thm1_aeq2}, we have
\begin{align*}
\left[\lambda_{\max}\{\widetilde{R}^\T (I_q \otimes\underline{\Gamma}_{\min}^{-1}) \widetilde{R}\}\right]^{1/2} 
&\leq \frac{9}{2\alpha} \left [\frac{\lambda_{\max}\{\widetilde{R}^\T (I_q \otimes\underline{\Gamma}_R^{-1}) \widetilde{R}\}}{n}\right ]^{1/2}\\
&=\frac{9}{2\alpha}\left \{\frac{\lambda_{\max}\left (\sum_{i=1}^{q}R_i \underline{\Gamma}_R^{-1} R_i^\T \right)}{n}\right \}^{1/2}.
\end{align*}
Then, along the same lines of the arguments for Theorem \ref{thm1}, we accomplish the proof of this proposition.

\subsection{Proofs of Lemmas \ref{lem_A1}--\ref{lem_A3}\label{subsec:lem_A_proofs}} 
The covering and discretization results in Lemma \ref{lem_A1} are modified from Lemmas 4.1, D.1 and D.2 in \cite{Simchowitz2018}. For clarity, we rewrite the proofs of Lemma \ref{lem_A1}(i)--(ii) to correct any typographical error in their proofs, and present our own proof of Lemma \ref{lem_A1}(iii).  Lemma \ref{lem_A2} establishes a pointwise lower bound on $X^\T X$ via the \BMSB\ condition, which will be strengthened into a union bound in the proof of Theorem \ref{thm1} via Lemma \ref{lem_A1}(i); see also Proposition 2.5 in the above paper.
Finally, as a multivariate generalization of Lemma 4.2(b) in their paper, Lemma \ref{lem_A3} gives a concentration bound on $ \sum_{t=1}^{n}x_t^\T \eta_t / (\sum_{t=1}^{n}\lVert x_t \rVert^2)^{1/2}$. Note that it is crucial to bound this self-normalized process as a whole, instead of bounding the numerator $\sum_{t=1}^{n}x_t^\T \eta_t$ and the denominator $(\sum_{t=1}^{n}\lVert x_t \rVert^2)^{1/2}$ separately; otherwise, the bound would degrade for slower-mixing processes.

\begin{proof}[Proof of Lemma~\ref{lem_A1}] Note that claim (i) will be used to cover $\mathcal{S}^{m-1}$ in terms of $\Gamma_{\min}$ and $\Gamma_{\max}$ for deriving the union upper bound on $Z^\T Z$ in the proof of Theorem \ref{thm1}.  The corresponding covering number is given in claim (ii), which is larger when $\Gamma_{\max}$ is farther away from $\Gamma_{\min}$ as measured by $\log\det(\Gamma_{\max}\Gamma_{\min}^{-1})$.  Claim (iii) is a discretization result for $\omega^\T Z \nu/\lVert Z^\T \omega\rVert$. 
	
	To prove (i), it is equivalent to show that 
	\begin{equation}\label{eq:lem_A1_eq1}
	\mathcal{E}=\{\inf_{\omega\in\mathcal{T}} \omega^\T Z^\T Z \omega \geq 1 \} \cap \{Z^\T Z \preceq \Gamma_{\max}\} \subseteq \{Z^\T Z \succeq  \Gamma_{\min}/2\}.
	\end{equation}
	Since $\mathcal{T}$ is a $1/4$-net of $\mathcal{S}_{\Gamma_{\min}}$ in the norm $\lVert \Gamma_{\max}^{1/2}(\cdot)\rVert$, on the event $\mathcal{E}$, we have
	\begin{align*}
	1/4 \geq \sup_{\omega\in \mathcal{S}_{\Gamma_{\min}}}\inf_{\upsilon\in\mathcal{T}}\lVert \Gamma_{\max}^{1/2}(\omega-\upsilon)\rVert &\geq \sup_{\omega\in \mathcal{S}_{\Gamma_{\min}}}\inf_{\upsilon\in\mathcal{T}}\lVert Z(\omega-\upsilon)\rVert\\
	& \geq \sup_{\omega\in \mathcal{S}_{\Gamma_{\min}}}\inf_{\upsilon\in\mathcal{T}}\left (\lVert Z \upsilon\rVert-\lVert Z \omega\rVert\right )
	= \inf_{\omega\in\mathcal{T}}\lVert Z \omega\rVert-\inf_{\omega\in \mathcal{S}_{\Gamma_{\min}}}\lVert Z \omega\rVert \\
	&\geq 1 -\inf_{\omega\in \mathcal{S}_{\Gamma_{\min}}}\lVert Z \omega\rVert,
	\end{align*}
	where the second and last inequalities are due to  $Z^\T Z \preceq  \Gamma_{\max}$ and $\inf_{\omega\in\mathcal{T}} \omega^\T Z^\T Z \omega \geq 1$, respectively.  As a result,
	\begin{equation*}
	3/4 \leq \inf_{\omega\in \mathcal{S}_{\Gamma_{\min}}}\lVert Z \omega\rVert = \inf_{\omega\in \mathcal{S}^{m-1}}\lVert Z\Gamma_{\min}^{-1/2}\omega\rVert=\left \{\lambda_{\min} (\Gamma_{\min}^{-1/2}Z^\T Z\Gamma_{\min}^{-1/2})\right \}^{1/2}.
	\end{equation*}
	Therefore,
	$Z^\T Z \succeq (9/16) \Gamma_{\min} \succeq \Gamma_{\min}/2$, i.e., \eqref{eq:lem_A1_eq1} holds.
	
	The proof of claim (ii)  is basically the same as that in \cite{Simchowitz2018}, except for some minor corrections. Note that $\vert\mathcal{T} \vert$ is equal to the covering number of the shell of the  ellipsoid $E=\{\omega\in\mathbb{R}^m: \omega^\T \Gamma_{\max}^{-1/2}\Gamma_{\min}\Gamma_{\max}^{-1/2}\omega\leq 1\}$ in the Euclidean norm. Let $B=\{x\in\mathbb{R}^m:\lVert x\rVert \leq 1\}$ be the unit ball in $\mathbb{R}^m$, and denote by $+$  the Minkowski sum. If $\mathcal{T}$ is a minimal $\epsilon$-net of $\mathcal{S}_{\Gamma_{\min}}$ in the norm $\lVert \Gamma_{\max}^{1/2}(\cdot)\rVert$, then it follows from a standard volumetric argument that
	\begin{align*}
	\vert\mathcal{T} \vert 
	\leq \frac{\text{vol}\{E+(\epsilon/2)B\}}{\text{vol}\left \{(\epsilon/2)B \right \}}
	\leq \frac{\text{vol}\left \{(1+\epsilon/2)E\right \}}{\text{vol}\left \{(\epsilon/2)B \right\}}
	&= \frac{(1+\epsilon/2)^m\text{vol}(E )}{(\epsilon/2)^m\text{vol}(B)}\\
	&=\frac{(1+\epsilon/2)^m}{(\epsilon/2)^m \left \{\det(\Gamma_{\max}^{-1/2}\Gamma_{\min}\Gamma_{\max}^{-1/2} ) \right \}^{1/2}}\\
	&=(2/\epsilon+1)^m\left \{\det(\Gamma_{\min}^{-1}\Gamma_{\max})\right \}^{1/2}.
	\end{align*}
	Taking $\epsilon=1/4$ yields the result in (ii). 
	
	Finally, we prove (iii). First note that since $\Gamma_{\min}\succ0$, we have
	\[
	\sup_{\omega\in \mathcal{S}^{m-1}}\frac{\omega^\T Z^\T \nu}{\lVert Z \omega\rVert}
	=\sup_{\omega\in \mathbb{R}^m\setminus\{0\}}\frac{\omega^\T Z^\T \nu}{\lVert Z \omega\rVert}
	=\sup_{\omega\in \mathbb{R}^m\setminus\{0\}}\frac{\omega^\T \Gamma_{\min}^{-1/2} Z^\T \nu}{\lVert Z \Gamma_{\min}^{-1/2}\omega\rVert}
	=\sup_{\omega\in \mathcal{S}_{\Gamma_{\min}}}\frac{\omega^\T Z^\T \nu}{\lVert Z \omega\rVert}.
	\]
	For a fixed $\nu\in\mathbb{R}^n$, define $\phi:\mathbb{R}^m\setminus\{0\}\rightarrow\mathbb{R}$ by 
	\[
	\phi(\omega)=\frac{\omega^\T Z^\T \nu}{\lVert Z\omega\rVert}.
	\]	
	To prove (iii), we will show that for any $\omega\in\mathcal{S}_{\Gamma_{\min}}$, there exist $\omega_0\in\mathcal{T}$ and $u\in\mathbb{R}^d\setminus\{0\}$ such that
	\begin{equation}\label{eq:lem_A1_eq2}
	\phi(\omega) \leq \phi(\omega_0)+(1/2)\phi(u).
	\end{equation}
	Let 
	\[
	u=\frac{\omega}{\lVert Z \omega\rVert}-\frac{\omega_0}{\lVert Z\omega_0\rVert}.
	\]
	Then, $u\neq 0$ as long as $\omega\neq\omega_0$, and we have
	\[\phi(\omega)-\phi(\omega_0)=u^\T Z^\T \nu = \lVert Z u\rVert \phi(u).\] Therefore, to prove \eqref{eq:lem_A1_eq2}, it suffices to show that
	\begin{equation}\label{eq:lem_A1_eq3}
	\lVert Z u\rVert \leq 1/2.
	\end{equation}
	
	Note that 
	\[
	Z u=\frac{Z(\omega-\omega_0)}{\lVert Z \omega\rVert}+\frac{Z\omega_0}{\lVert Z \omega_0\rVert}\frac{\lVert Z \omega\rVert-\lVert Z \omega_0\rVert}{\lVert Z \omega\rVert}.
	\]
	As a result,
	\begin{equation}\label{eq:lem_A1_eq4}
	\lVert Z u\rVert \leq \frac{2\lVert Z(\omega-\omega_0)\rVert }{\lVert Z \omega\rVert} 
	\leq \frac{2\lVert Z(\omega-\omega_0)\rVert }{\inf_{\omega\in \mathcal{S}_{\Gamma_{\min}}}\lVert Z \omega\rVert}.
	\end{equation}
	
	Since $0\prec \Gamma_{\min}\preceq Z^\T Z$, we have
	\[
	\inf_{\omega\in \mathcal{S}_{\Gamma_{\min}}}\lVert Z \omega\rVert = \inf_{\omega\in \mathcal{S}^{m-1}}\lVert Z\Gamma_{\min}^{-1/2}\omega\rVert=\left \{\lambda_{\min} (\Gamma_{\min}^{-1/2}Z^\T Z\Gamma_{\min}^{-1/2})\right \}^{1/2}\geq 1. 
	\]
	Moreover, since $Z^\T Z \preceq \Gamma_{\max}$ and $\mathcal{T}$ is a $1/4$-net of $\mathcal{S}_{\Gamma_{\min}}$ in $\lVert \Gamma_{\max}^{1/2}(\cdot)\rVert$, there is $\omega_0\in\mathcal{T}$ such that
	\[
	\lVert Z(\omega-\omega_0)\rVert \leq \lVert \Gamma_{\max}^{1/2}(\omega-\omega_0)\rVert \leq 1/4.
	\]
	Combining the results above with \eqref{eq:lem_A1_eq4}, we have \eqref{eq:lem_A1_eq3}, and hence \eqref{eq:lem_A1_eq2}. Taking the supremum with respect to $\omega\in \mathcal{S}_{\Gamma_{\min}}$ on both sides of \eqref{eq:lem_A1_eq2}, we have
	\[
	\sup_{\omega\in \mathcal{S}_{\Gamma_{\min}}}\frac{\omega^\T Z^\T \nu}{\lVert Z \omega\rVert} 
	\leq  \max_{\omega_0\in\mathcal{T}}\frac{\omega_0^\T Z^\T \nu}{\lVert Z \omega_0\rVert}+\frac{1}{2}\sup_{u\in \mathbb{R}^d\setminus\{0\}}\frac{u^\T Z^\T \nu}{\lVert Z u\rVert}
	= \max_{\omega\in\mathcal{T}}\frac{\omega^\T Z^\T \nu}{\lVert Z \omega\rVert}+\frac{1}{2}\sup_{\omega\in \mathcal{S}_{\Gamma_{\min}}}\frac{\omega^\T Z^\T \nu}{\lVert Z \omega\rVert},
	\]
	which yields the inequality in (iii).
\end{proof}

\begin{proof}[Proof of Lemma~\ref{lem_A2}]
	This lemma directly follows from Proposition 2.5 of \cite{Simchowitz2018}, where  the Chernoff bound technique is applied to lower bound the Gram matrix via aggregating all the $\lfloor n/k \rfloor$ blocks of size $k$.
\end{proof}

\begin{proof}[Proof of Lemma~\ref{lem_A3}] Along the lines of the proof of Lemma 4.2 in \cite{Simchowitz2018}, we can show that  the left-hand side of \eqref{eq:lem_martingale} is bounded above by $\log\lceil\beta_+/\beta_-\rceil \exp\{-\gamma^2/(6\sigma^2)\}$. Then \eqref{eq:lem_martingale} follows from the fact that $\log\lceil x \rceil < \log(1+x) \leq x$ for $x>0$.
\end{proof}

\section{Proofs of Lemmas \ref{lem_bmsb}--\ref{lem_gram}\label{sec:S2}}
\subsection{Proof of Lemma \ref{lem_bmsb}}
First note that for any $s\in\mathbb{Z}$ and positive integer $t$, 
\[
X_{s+t}=\eta_{s+t-1}+A_*\eta_{s+t-2}+\cdots+A_*^{t-1}\eta_{s}+A_*^tX_s=\sum_{\ell=0}^{t-1}A_*^{\ell}\eta_{s+t-\ell-1}+A_*^tX_s,
\]
where, by Assumption  \ref{assum:var}(ii), $\sum_{\ell=0}^{t-1}A_*^{\ell}\eta_{s+t-\ell-1}$ is independent of $\mathcal{F}_s$, and $A_*^tX_s\in \mathcal{F}_s$.
Then, for any $\omega\in\mathcal{S}^{d-1}$, we have 
\[
\frac{\omega^\T X_{s+t}}{\sigma(\omega^\T \Gamma_t\omega)^{1/2}}=S_{\omega}+c_\omega,
\]
where $c_\omega=\omega^\T A_*^tX_s/\{\sigma(\omega^\T \Gamma_t\omega)^{1/2}\}$, and $S_{\omega}$ can be written as a weighted sum of real-valued independent random variables,
\[
S_{\omega}=\frac{\omega^\T  \sum_{\ell=0}^{t-1}A_*^{\ell}\eta_{s+t-\ell-1}}{\sigma(\omega^\T \Gamma_t\omega)^{1/2}}=\sum_{\ell=0}^{t-1} a_\ell e_{\ell},
\]
with 
\[
a_\ell=\left \{\frac{\omega^\T  A_*^{\ell} (A_*^\T)^{\ell} \omega}{\omega^\T \Gamma_t\omega}\right \}^{1/2}, \quad e_{\ell}=\frac{\omega^\T A_*^{\ell}\eta_{s+t-\ell-1}}{\sigma\{\omega^\T  A_*^{\ell} (A_*^\T)^{\ell} \omega\}^{1/2}}.
\]
Notice that $\sum_{\ell=0}^{t-1}a_\ell^2=1$, and $e_0, \dots, e_{t-1}$ are real-valued independent random variables. Moreover, by Assumption \ref{assum:var}(iii),  the density of each $e_\ell$ is bounded by $C_0$ almost everywhere. Applying Theorem 1.2 in \cite{Rudelson_Vershynin2015}, it follows that the density of $S_{\omega}$ is bounded by $\surd{2} C_0$ almost everywhere. In addition, $S_{\omega}$ is independent of $\mathcal{F}_s$, and  $c_\omega\in\mathcal{F}_s$. Therefore,
\begin{align}\label{eq:lem1_eq1}
\P\left \{\vert \omega^\T X_{s+t}\vert \geq \sigma (\omega^\T \Gamma_{t}\omega)^{1/2} (4C_0)^{-1}\mid \mathcal{F}_s \right \} &= \P\left \{\vert S_{\omega}+c_\omega \vert \geq (4C_0)^{-1}\mid \mathcal{F}_s \right \} \notag \\
& = 1- \P\left \{\vert S_{\omega}+c_\omega \vert \leq (4C_0)^{-1}\mid \mathcal{F}_s \right \} \notag\\
& \geq 1-\surd{2}/2>0.
\end{align}

For any integer $k\geq 1$, by \eqref{eq:lem1_eq1} and the fact that $\Gamma_{t}\succeq \Gamma_{k}$ for  $t\geq k$, we have
\begin{align*}
&\frac{1}{2k}\sum_{t=1}^{2k}\P \left \{\vert \omega^\T X_{s+t}\vert \geq \sigma (\omega^\T \Gamma_{k}\omega)^{1/2} (4C_0)^{-1} \mid \mathcal{F}_s \right \}\\
&\hspace{5mm}
\geq \frac{1}{2k}\sum_{t=k}^{2k}\P \left \{\vert \omega^\T X_{s+t}\vert \geq  \sigma (\omega^\T \Gamma_{k}\omega)^{1/2} (4C_0)^{-1} \mid \mathcal{F}_s \right \}\\
&\hspace{5mm}
\geq \frac{1}{2k}\sum_{t=k}^{2k}\P \left \{\vert \omega^\T X_{s+t}\vert \geq \sigma (\omega^\T \Gamma_{t}\omega)^{1/2} (4C_0)^{-1} \mid \mathcal{F}_s \right \}\\
&\hspace{5mm} \geq \frac{(2k-k+1)(1-\surd{2}/2)}{2k} > \frac{1}{10}.
\end{align*}
Choosing $\Gamma_{\smb} = \sigma^2 \Gamma_{k}/ (4C_0)^2$, we accomplish the proof of this lemma.

\subsection{Proof of Lemma \ref{lem_gram1}}
Since $E(X^\T X)=\sigma^2\sum_{t=1}^{n}\Gamma_{t}\preceq \sigma^2 n\Gamma_{n}$, we have
\[
E(Z^\T Z)=R^\T \{I_d\otimes E(X^\T X)\} R\leq \sigma^2 n R^\T (I_d\otimes \Gamma_{n}) R.
\]
Then, with $\overline{\Gamma}_R = (\sigma^2 m/\delta) R^\T (I_d\otimes \Gamma_n) R$, it follows from the Markov inequality that
\begin{align*}
\P(Z^\T Z \npreceq n\overline{\Gamma}_R)&=\P \left [\lambda_{\max}\{ (n\overline{\Gamma}_R)^{-1/2} Z^\T Z (n\overline{\Gamma}_R)^{-1/2}\}  \geq1 \right ]\\
& \leq E\left [\lambda_{\max}\{ (n\overline{\Gamma}_R)^{-1/2} Z^\T Z (n\overline{\Gamma}_R)^{-1/2}\}  \right ]\\
& \leq \text{tr}\left \{ (n\overline{\Gamma}_R)^{-1/2} E(Z^\T Z) (n\overline{\Gamma}_R)^{-1/2}\right \} \\
& \leq \text{tr}\{(\delta/m)I_m \}=\delta,
\end{align*}
which completes the proof of this lemma.
Note that the factor of $m$ in the definition of  $\overline{\Gamma}_R$  is a consequence of upper bounding  $\lambda_{\max}(\cdot)$ by $\text{tr}(\cdot)$. 

\subsection{Proof of Lemma \ref{lem_gram}}
Recall that $R=(R_1^\T, \dots, R_d^\T)^\T$, where each $R_i$ is a $d\times m$ block. For simplicity,  denote $Q_i=XR_i (R_i^\T R_i)^{-1/2}=(Q_{i,1}, \dots, Q_{i,n})^\T\in\mathbb{R}^{n\times m}$ with $i=1,\dots, d$, where $Q_{i,t}= (R_i^\T R_i)^{-1/2}R_i^\T X_t\in\mathbb{R}^m$. To prove this lemma, it suffices to verify the following two results:
\begin{equation}\label{eq:lem3_eq1}
\P(Z^\T Z \npreceq n\overline{\Gamma}_R) \leq \sum_{i=1}^{d}P \{  \lVert Q_i^\T Q_i - E(Q_i^\T Q_i) \rVert_2 > n\sigma^2\xi \},
\end{equation}
where $\overline{\Gamma}_{R}=\sigma^2 R^\T (I_d\otimes \Gamma_n) R + \sigma^2 \xi R^\T R$, and
\begin{equation}\label{eq:lem3_eq2}
\P \{  \lVert Q_i^\T Q_i - E(Q_i^\T Q_i) \rVert_2 > n\sigma^2\xi \} \leq \delta/d \quad (i=1,\ldots, d).
\end{equation}

We first prove \eqref{eq:lem3_eq1}. Note that $Z^\T Z=R^\T \{I_d\otimes (X^\T X)\} R$ and $E(Z^\T Z)=R^\T \{I_d\otimes E(X^\T X)\} R=\sigma^2 R^\T \{I_d\otimes \sum_{t=1}^{n}\Gamma_t\} R$. Then, since $n\Gamma_n\succeq \sum_{t=1}^{n}\Gamma_t$, we have
\[
n\overline{\Gamma}_R\succeq \sigma^2 R^\T \{I_d\otimes \sum_{t=1}^{n}\Gamma_t\} R+ n\sigma^2\xi R^\T R
=E(Z^\T Z) + n \sigma^2\xi R^\T R.
\]
As a result,
\begin{align}\label{eq:lem3_eq3}
\P(Z^\T Z \preceq n\overline{\Gamma}_R)	&=\P\{Z^\T Z - E(Z^\T Z)\preceq n\sigma^2\xi R^\T R\} \notag \\
&=\P\left [ \sum_{i=1}^{d}R_i^\T \{X^\T X - E(X^\T X)\}R_i \preceq n \sigma^2\xi \sum_{i=1}^{d}R_i^\T R_i \right ] \notag\\
&\geq \P \left \{ \bigcap_{i=1}^d \left [  R_i^\T \{X^\T X - E(X^\T X)\}R_i \preceq n \sigma^2\xi R_i^\T R_i\right ] \right \} \notag\\
&\geq 1- \sum_{i=1}^{d}\P \left[  R_i^\T \{X^\T X - E(X^\T X)\}R_i \npreceq n \sigma^2\xi R_i^\T R_i\right ].
\end{align}
Moreover, for $i=1,\dots, d$, we have
\begin{align*}
&\P \left[  R_i^\T \{X^\T X - E(X^\T X)\}R_i \preceq n\sigma^2\xi R_i^\T R_i\right ]\\
&\hspace{5mm}=\P \left[  (R_i^\T R_i)^{-1/2} R_i^\T \{X^\T X - E(X^\T X)\}R_i (R_i^\T R_i)^{-1/2} \preceq n \sigma^2\xi I_m\right ]\\
&\hspace{5mm}=\P\{ Q_i^\T Q_i - E(Q_i^\T Q_i) \preceq n\sigma^2\xi I_m\}\\
&\hspace{5mm} \geq \P \{  \lVert Q_i^\T Q_i - E(Q_i^\T Q_i) \rVert_2 \leq n\sigma^2\xi \},
\end{align*}
which implies
\begin{equation}\label{eq:lem3_eq4}
\P \left[  R_i^\T \{X^\T X - E(X^\T X)\}R_i \npreceq n \sigma^2\xi R_i^\T R_i\right ] \leq  \P \{  \lVert Q_i^\T Q_i - E(Q_i^\T Q_i) \rVert_2 > n \sigma^2\xi \}.
\end{equation}
Combining \eqref{eq:lem3_eq3} and \eqref{eq:lem3_eq4}, we accomplish the proof of \eqref{eq:lem3_eq1}.

Next we prove \eqref{eq:lem3_eq2}. Let $\mathcal{T}_0$ be a minimal $(1/4)$-net of the sphere $\mathcal{S}^{m-1}$ in the Euclidean norm. It follows from a standard volumetric argument that $\vert \mathcal{T}_0\vert \leq 9^m$. Moreover, by Lemma 5.4 in \cite{Vershynin2012}, we have
\begin{equation}\label{eq:lem3_eq5}
\lVert Q_i^\T Q_i - E(Q_i^\T Q_i) \rVert_2 \leq 2 \max_{\omega\in\mathcal{T}_0}  \vert \omega^\T \{Q_i^\T Q_i - E(Q_i^\T Q_i) \} \omega \vert. 
\end{equation}
Furthermore, since $\{\eta_t\}$ are normal, $\vect(X^\T)=(X_1^\T, \dots, X_n^\T)^\T$ follows the multivariate normal distribution with mean zero and covariance matrix $\Sigma_X$. Then, for any $\omega\in \mathcal{S}^{m-1}$, we have
\[
Q_i \omega =(Q_{i,1}^\T \omega, \dots, Q_{i,n}^\T \omega)^\T= \left [I_n\otimes \{\omega^\T (R_i^\T R_i)^{-1/2}R_i^\T\}\right ]\vect(X^\T)\sim N(0, \Sigma_{\omega}),
\] 
where
\begin{equation*}
\Sigma_{\omega}=\left [I_n\otimes \{\omega^\T (R_i^\T R_i)^{-1/2}R_i^\T\}\right ] \Sigma_X
\left [I_n\otimes \{R_i (R_i^\T R_i)^{-1/2} \omega\}\right ],
\end{equation*}
and hence there exists $z\sim N(0, I_n)$ such that $\omega^\T Q_i^\T Q_i \omega=z^\T \Sigma_{\omega} z$. As a result, it follows from the Hanson-Wright inequality \citep{Vershynin2018} that, for every $\xi>0$, 
\begin{align}\label{eq:lem3_eq6}
\P \left [\vert \omega^\T \{Q_i^\T Q_i - E(Q_i^\T Q_i) \} \omega \vert > n\sigma^2 \xi/2 \right ] 
& = \P \left [\vert z^\T  \Sigma_{\omega} z - E(z^\T  \Sigma_{\omega} z)  \vert > n \sigma^2 \xi/2 \right ] \notag \\
& \leq 2 \exp \left [-\frac{1}{C_1} \min\left \{ \left (\frac{n\sigma^2\xi}{2\lVert \Sigma_\omega \rVert_F}\right )^2, \frac{n\sigma^2 \xi}{2\lVert \Sigma_\omega \rVert_2}\right \}\right ],
\end{align}
where $C_1 >0$ is a universal constant. In view of \eqref{eq:lem3_eq5} and \eqref{eq:lem3_eq6}, we have
\begin{align*}
\P \{ \lVert Q_i^\T Q_i - E(Q_i^\T Q_i) \rVert_2 > n \sigma^2 \xi \} 
& \leq  \P \left [ \max_{\omega\in\mathcal{T}_0}  \vert \omega^\T  \{Q_i^\T Q_i - E(Q_i^\T Q_i) \} \omega \vert > n\sigma^2\xi/2 \right ]\\
& \leq \vert \mathcal{T}_0 \vert \; \P \left [\vert \omega^\T  \{Q_i^\T Q_i - E(Q_i^\T Q_i) \} \omega \vert > n\sigma^2\xi/2 \right ]\\
& \leq  2(9^m)  \exp \left [-\frac{1}{C_1} \min\left \{ \left (\frac{n\sigma^2\xi}{2\lVert \Sigma_\omega \rVert_F}\right )^2, \frac{n\sigma^2 \xi}{2\lVert \Sigma_\omega \rVert_2}\right \}\right ] \\
& \leq \delta/d
\end{align*}
as long as
\begin{equation}\label{eq:lem3_eq7}
\xi \geq \frac{2}{n\sigma^2} \left [ \left \{\psi(m,d,\delta) \right \}^{1/2} \lVert \Sigma_\omega \rVert_F +  \psi(m,d,\delta)\lVert \Sigma_\omega \rVert_2\right ],
\end{equation}
where $\psi(m,d,\delta)=C_1 \{m\log 9+\log d+\log(2/\delta)\}$. 

To prove \eqref{eq:lem3_eq2}, it now remains to choose $\xi$ such that \eqref{eq:lem3_eq7} holds. Note that 
\begin{equation}\label{eq:lem3_eq8}
\lVert \Sigma_\omega \rVert_2 \leq \lambda_{\max} \left [I_n \otimes \{\omega^\T (R_i^\T R_i)^{-1/2}R_i^\T R_i (R_i^\T R_i)^{-1/2}\omega\}\right ] \lVert \Sigma_X \rVert_2= \lVert \Sigma_X \rVert_2.
\end{equation}
Moreover, since 
\begin{align*}
\tr(\Sigma_{\omega}) & =\sigma^2 \sum_{t=1}^{n} \omega^\T (R_i^\T R_i)^{-1/2} R_i^\T \Gamma_{t} R_i (R_i^\T R_i)^{-1/2} \omega \\
& \leq \sigma^2 n \lambda_{\max}(\Gamma_n) \lambda_{\max}\left \{(R_i^\T R_i)^{-1/2} R_i^\T R_i (R_i^\T R_i)^{-1/2}\right \} \\
& = \sigma^2 n \lambda_{\max}(\Gamma_n),
\end{align*}
in light of \eqref{eq:lem3_eq8}, we have
\begin{equation}\label{eq:lem3_eq9}
\lVert \Sigma_\omega \rVert_F = \{\tr(\Sigma_{\omega}^2)\}^{1/2} \leq \{\lVert \Sigma_\omega \rVert_2 \tr(\Sigma_{\omega})\}^{1/2} \leq \left \{\sigma^2 n \lambda_{\max}(\Gamma_n) \lVert \Sigma_X \rVert_2 \right \}^{1/2}.
\end{equation}
Replacing $\lVert \Sigma_\omega \rVert_2$ and $\lVert \Sigma_\omega \rVert_F$ in \eqref{eq:lem3_eq7} by their upper bounds in \eqref{eq:lem3_eq8} and \eqref{eq:lem3_eq9}, respectively, it follows that \eqref{eq:lem3_eq2} holds if we choose $\xi$ as in \eqref{xi} in the main paper.
The proof of this lemma is complete.

\section{Proofs of Equation \eqref{logdet} and Proposition \ref{prop_G}\label{sec:S3}}
\subsection{Proof of Equation \eqref{logdet}}
The proof of Equation \eqref{logdet} relies on the following lemma:

\begin{lemma}\label{lem_logdet}
	Let $A$ and $B$ be  $m\times m$ symmetric positive definite matrices such that $B^{1/2}AB^{1/2}\succeq I_m$. For any $\xi>0$, it holds 
	$\log\det(AB+\xi I_m)\leq m\log \{2\max(1,\xi)\}+\log\det(AB)$.
\end{lemma}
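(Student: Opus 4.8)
The statement is really an eigenvalue inequality in disguise, so the plan is to pass from the non-symmetric product $AB$ to a genuinely symmetric matrix, read off the eigenvalues, and then invoke an elementary scalar bound. The only point requiring a small amount of care is that $AB$ itself need not be symmetric, so I must justify that its determinant (and that of $AB+\xi I_m$) is the product of the eigenvalues of a symmetric matrix.

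\textbf{Step 1: Similarity reduction.} Set $C=B^{1/2}AB^{1/2}$. Since $B\succ0$ and $A\succ0$, $C$ is symmetric positive definite, and conjugating by $B^{1/2}$ gives
\[
B^{1/2}(AB+\xi I_m)B^{-1/2}=B^{1/2}AB^{1/2}+\xi I_m=C+\xi I_m .
\]
Hence $AB+\xi I_m$ is similar to $C+\xi I_m$ and $AB$ is similar to $C$, so $\det(AB+\xi I_m)=\det(C+\xi I_m)$ and $\det(AB)=\det(C)$. Writing $\lambda_1,\dots,\lambda_m>0$ for the eigenvalues of $C$, we get $\det(AB+\xi I_m)=\prod_{i=1}^m(\lambda_i+\xi)$ and $\det(AB)=\prod_{i=1}^m\lambda_i$.

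\textbf{Step 2: Use the hypothesis and a scalar bound.} The assumption $B^{1/2}AB^{1/2}\succeq I_m$ means $C\succeq I_m$, i.e.\ $\lambda_i\ge 1$ for all $i$. Therefore
\[
\frac{\det(AB+\xi I_m)}{\det(AB)}=\prod_{i=1}^m\frac{\lambda_i+\xi}{\lambda_i}=\prod_{i=1}^m\Bigl(1+\frac{\xi}{\lambda_i}\Bigr)\le (1+\xi)^m\le \bigl\{2\max(1,\xi)\bigr\}^m,
\]
where the last inequality holds because $1+\xi\le 2$ when $\xi\le1$ and $1+\xi\le 2\xi$ when $\xi\ge1$. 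Taking logarithms yields $\log\det(AB+\xi I_m)\le m\log\{2\max(1,\xi)\}+\log\det(AB)$, which is the claim.

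\textbf{Main obstacle.} There is essentially no hard step; the only thing to watch is the legitimacy of Step 1 (that $\det$ of a non-symmetric matrix may be computed from the eigenvalues of the similar symmetric matrix $C$), and the trivial case-check for the scalar inequality $1+\xi\le 2\max(1,\xi)$. Everything else is a one-line computation with products of eigenvalues.
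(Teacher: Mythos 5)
Your proof is correct and takes essentially the same route as the paper: both arguments reduce the claim to the eigenvalues $\lambda_i\ge 1$ of $B^{1/2}AB^{1/2}$ and apply the scalar bound $\lambda_i+\xi\le 2\max(1,\xi)\lambda_i$ (your $1+\xi/\lambda_i\le 2\max(1,\xi)$ is the same inequality after dividing by $\lambda_i$). The only cosmetic difference is that you justify the eigenvalue identification via the explicit similarity $B^{1/2}(AB+\xi I_m)B^{-1/2}=B^{1/2}AB^{1/2}+\xi I_m$, whereas the paper cites the fact that $AB$ and $B^{1/2}AB^{1/2}$ have the same nonzero eigenvalues \citep[Theorem 1.3.20]{Horn_Johnson1985}.
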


\begin{proof}[Proof of Lemma~\ref{lem_logdet}]
	Let $\lambda_1\geq \lambda_2\geq \cdots \geq \lambda_m\geq 1$ be the eigenvalues of $B^{1/2}AB^{1/2}$. For any $\xi>0$, it can be readily shown that $\lambda_i+\xi\leq 2\max(1,\xi) \lambda_i$ for all $i$. Moreover, by Theorem 1.3.20 in \cite{Horn_Johnson1985}, $AB$ and $B^{1/2}AB^{1/2}$ have the same nonzero eigenvalues.
	Thus, 
	\begin{align*}
	\log\det(AB+\xi I_m)=\sum_{i=1}^{m}\log(\lambda_i+\xi)&\leq \sum_{i=1}^{m} \left [\log \{2\max(1,\xi)\}+\log\lambda_i\right ]\\
	&= m\log \{2\max(1,\xi)\}+\log\det(AB).
	\end{align*}
	The proof of Lemma \ref{lem_logdet} is complete.
\end{proof}

Now we prove Equation \eqref{logdet}. First note that 
\begin{equation}\label{eq:logdet1}
\log \det\{\overline{\Gamma}_R (\sigma^2 R^\T R)^{-1} (4C_0)^2\}=\log \det\{\overline{\Gamma}_R (\sigma^2 R^\T R)^{-1}\}+2m\log(4C_0).
\end{equation}

If $\overline{\Gamma}_{R}=\overline{\Gamma}_{R}^{(1)}=\sigma^2 m R^\T (I_d\otimes \Gamma_n) R/\delta$,  it is easy to see that
\begin{equation}\label{eq:logdet2}
\log \det\{\overline{\Gamma}_R (\sigma^2 R^\T R)^{-1}\}=m\log(m/\delta)+\kappa,
\end{equation}
where $\kappa =\log \det\left \{R^\T (I_d\otimes \Gamma_n) R (R^\T R)^{-1} \right \}$.

On the other hand, if $\overline{\Gamma}_{R}=\overline{\Gamma}_{R}^{(2)}=\sigma^2 R^\T (I_d\otimes \Gamma_n) R + \sigma^2 \xi R^\T R$,  we have
\[
\log \det\{\overline{\Gamma}_R (\sigma^2 R^\T R)^{-1}\}=\log \det\left \{R^\T (I_d\otimes \Gamma_n) R (R^\T R)^{-1}+\xi I_m \right \},
\]
Note that $(R^\T R)^{-1/2} R^\T (I_d\otimes \Gamma_n) R (R^\T R)^{-1/2}\succeq I_m$, since $\Gamma_n\succeq I_d$. Then, applying Lemma \ref{lem_logdet} with $A=R^\T (I_d\otimes \Gamma_n) R$ and $B=(R^\T R)^{-1}$, we have
\begin{equation}\label{eq:logdet3}
\log \det\{\overline{\Gamma}_R (\sigma^2 R^\T R)^{-1}\}\leq m\log \{2\max(1,\xi)\}+\kappa.
\end{equation}
Combining \eqref{eq:logdet1}--\eqref{eq:logdet3}, we accomplish the proof of Equation \eqref{logdet}.

\subsection{Proof of Proposition \ref{prop_G}}
Proposition \ref{prop_G} is a direct consequence of Equations \eqref{logdet1} and \eqref{logdet} in the main paper and the  upper bounds of $\kappa$ and $\xi$ in  Lemmas \ref{lem_kappa} and \ref{lem_xi} below.  Note that the proofs of Lemmas \ref{lem_kappa} and \ref{lem_xi} rely crucially on the intermediate results on upper bounds of $\lambda_{\max}(\Gamma_n)$ and $\lVert \Sigma_X \rVert_2$ as given in 
Lemmas \ref{lem_Gamma_n} and \ref{lem_Sigma_X} below, respectively.  The proofs of Lemmas \ref{lem_kappa}--\ref{lem_Sigma_X}  are collected in $\S$ \ref{subsec:lem_B_proofs}.

Recall that $\Sigma_X=[E(X_tX_s^\T)]_{1\leq t,s\leq n}$, where $E(X_tX_s^\T)=\sigma^2  A_*^{t-s} \Gamma_s$ for $1\leq s\leq t\leq n$ under Assumptions \ref{assum:var}(i) and (ii),
$\kappa =\log \det\left \{R^\T (I_d\otimes \Gamma_n) R (R^\T R)^{-1} \right \}$, and 
\begin{equation*}
\xi=\xi(m,d,n,\delta) =2 \left \{\frac{\lambda_{\max}(\Gamma_n)  \psi(m,d,\delta) \lVert \Sigma_X \rVert_2}{\sigma^2 n} \right \}^{1/2} +  \frac{2 \psi(m,d,\delta)\lVert \Sigma_X \rVert_2}{\sigma^2 n},
\end{equation*}
where $\psi(m,d,\delta)=C_1 \{m\log 9+\log d+\log(2/\delta)\}$, and $C_1>0$ is a universal constant.

As in the main paper, let the Jordan decomposition of $A_*$ be  $A_*=SJS^{-1}$, where $J$ has $L$ blocks with sizes $1\leq b_1,\ldots, b_L \leq d$, and both $J$ and $S$ are $d\times d$ complex matrices. Let 
$b_{\max}=\max_{1\leq \ell \leq L} b_\ell,$
and denote the condition number of $S$ by
$\cond(S)=\left \{\lambda_{\max}(S^* S)/\lambda_{\min}(S^*S)\right \}^{1/2}$,
where $S^*$ is the conjugate transpose of $S$.

\begin{lemma}\label{lem_kappa} For any $A_*\in\mathbb{R}^{d\times d}$, under Assumption \ref{assum:explosive}, 
	\[\kappa \lesssim  m \left[\log \{d\cond(S)\}+ b_{\max} \log n\right].\] 
	Moreover, if Assumption \ref{assum:stable1} holds, then $\kappa \lesssim m$.
\end{lemma}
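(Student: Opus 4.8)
The plan is to separate the contribution of the restriction matrix $R$ from that of the transition matrix $A_*$: first I would strip off $R$ and reduce $\kappa = \log\det\{R^\T(I_d\otimes\Gamma_n)R(R^\T R)^{-1}\}$ to a scalar bound on $\lambda_{\max}(\Gamma_n)$, and only then control the controllability Gramian $\Gamma_n$ under each of the two assumptions through the Jordan decomposition $A_* = SJS^{-1}$.

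\emph{Stripping off $R$.} By Theorem 1.3.20 of \cite{Horn_Johnson1985}, the matrix $R^\T(I_d\otimes\Gamma_n)R(R^\T R)^{-1}$ is similar to the $m\times m$ symmetric matrix $M = (R^\T R)^{-1/2}R^\T(I_d\otimes\Gamma_n)R(R^\T R)^{-1/2}$, so $\kappa = \log\det M$. Since $\Gamma_n = I_d + \sum_{s=1}^{n-1}A_*^s(A_*^\T)^s \succeq I_d$, we have $I_{d^2} \preceq I_d\otimes\Gamma_n \preceq \lambda_{\max}(\Gamma_n)I_{d^2}$; substituting $w = R(R^\T R)^{-1/2}v$ for $v\in\mathcal{S}^{m-1}$ gives $\|v\|^2 \le v^\T M v \le \lambda_{\max}(\Gamma_n)\|v\|^2$, so every eigenvalue of $M$ lies in $[1,\lambda_{\max}(\Gamma_n)]$ and hence $0 \le \kappa \le m\log\lambda_{\max}(\Gamma_n)$. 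As $\lambda_{\max}(\Gamma_n) = \|\Gamma_n\|_2 \le \sum_{s=0}^{n-1}\|A_*^s\|_2^2$, it remains to bound this sum of operator norms.

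\emph{The slightly explosive case.} Under Assumption \ref{assum:explosive} I would use $\|A_*^s\|_2 \le \cond(S)\|J^s\|_2$ and estimate $\|J^s\|_2$ block by block. For a Jordan block of size $b_\ell \le b_{\max}$ with eigenvalue $\lambda_\ell$, $|\lambda_\ell| \le \rho(A_*) \le 1 + c/n$, the expansion $J_\ell^s = \sum_{j=0}^{b_\ell-1}\binom{s}{j}\lambda_\ell^{\,s-j}N^j$ together with $\|N^j\|_2 \le 1$, $\binom{s}{j} \le s^{b_{\max}-1} \le n^{b_{\max}-1}$ and $|\lambda_\ell|^{s-j} \le (1+c/n)^n \le e^c$ (which also holds trivially when $\lambda_\ell = 0$) yields $\|J_\ell^s\|_2 \lesssim b_{\max}n^{b_{\max}-1}$ for $1 \le s \le n$. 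Summing the squares over $s < n$ gives $\lambda_{\max}(\Gamma_n) \lesssim \cond(S)^2 b_{\max}^2 n^{2b_{\max}-1}$; taking logarithms and using $b_{\max} \le d$ gives $\kappa \le m\log\lambda_{\max}(\Gamma_n) \lesssim m[\log\cond(S) + \log d + b_{\max}\log n] \lesssim m[\log\{d\cond(S)\} + b_{\max}\log n]$. This Gramian estimate is exactly what Lemma \ref{lem_Gamma_n} in the supplement supplies.

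\emph{The stable case, and the main obstacle.} Under Assumption \ref{assum:stable1}, $\rho(A_*) \le \bar\rho < 1$ forces $\Gamma_n \preceq \Gamma_\infty = \lim_{n\to\infty}\Gamma_n < \infty$ (cf.\ Remark \ref{remark:kappa}), so $\lambda_{\max}(\Gamma_n) \le \lambda_{\max}(\Gamma_\infty)$ and, by the reduction above, it suffices to bound $\lambda_{\max}(\Gamma_\infty) = \|\sum_{s\ge0}A_*^s(A_*^\T)^s\|_2$ by a constant to conclude $\kappa \lesssim m$. Obtaining this bound — again the role of Lemma \ref{lem_Gamma_n} — is the delicate step, since Gelfand's formula $\rho(A_*) = \lim_{s\to\infty}\|A_*^s\|_2^{1/s}$ is not quantitative; the natural devices are the discrete Lyapunov identity $\Gamma_\infty = I_d + A_*\Gamma_\infty A_*^\T$, or a resolvent/contour representation of $A_*^s$ combined with $\|A_*\|_2 \le C$ and $\rho(A_*) \le \bar\rho$. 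Everything else is bookkeeping; in the explosive case the only care required is to keep the polynomial-in-$n$ factors arising from the Jordan block sizes under firm control and to verify the uniform estimate $|\lambda_\ell|^{s-j} \le e^c$, including the degenerate nilpotent situation.
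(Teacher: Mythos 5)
Your reduction of $\kappa$ --- passing to the symmetric matrix $M=(R^\T R)^{-1/2}R^\T(I_d\otimes\Gamma_n)R(R^\T R)^{-1/2}$, noting its eigenvalues lie in $[1,\lambda_{\max}(\Gamma_n)]$, and concluding $\kappa\leq m\log\lambda_{\max}(\Gamma_n)$ --- is exactly the paper's (only) step in the proof of this lemma, and your treatment of the explosive case reproduces the content of the supplement's Lemma \ref{lem_Gamma_n} with slightly different bookkeeping: the paper bounds the diagonal entries of $\sum_{s<n}J^s(J^*)^s$ and obtains $\lambda_{\max}(\Gamma_n)\lesssim d\,b_{\max}n^{2b_{\max}-1}\{\cond(S)\}^2$, whereas you bound $\lVert J^s\rVert_2$ directly and get $b_{\max}^2n^{2b_{\max}-1}\{\cond(S)\}^2$; after taking logarithms these are interchangeable, so the first claim is fully and correctly proved.

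The second claim ($\kappa\lesssim m$ under Assumption \ref{assum:stable1}) is where your write-up has a genuine gap: you reduce it to $\lambda_{\max}(\Gamma_\infty)\lesssim1$, call this ``the delicate step,'' and only list candidate devices (Lyapunov identity, resolvent) without carrying any of them out. The paper closes this step by precisely the Gelfand-formula argument you set aside: taking $\epsilon=\{1-\rho(A_*)\}/2$, there is $n_0=n_0(A_*)$ with $\lVert A_*^s\rVert_2\leq\{(1+\bar{\rho})/2\}^s$ for $s\geq n_0$, while $\lVert A_*^s\rVert_2\leq C^s$ for $s<n_0$, whence $\lambda_{\max}(\Gamma_n)\leq\sum_{s<n_0}C^{2s}+[1-\{(1+\bar{\rho})/2\}^2]^{-1}$, an $n$-free bound. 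Your discomfort is not baseless: the resulting constant depends on $A_*$ through $n_0$, and indeed no bound depending only on $(\bar{\rho},C)$ can hold uniformly in $(n,d)$ --- e.g., the bidiagonal matrix with $1/2$ on the diagonal and first superdiagonal has $\rho(A_*)=1/2$ and $\lVert A_*\rVert_2\leq1$, yet $\lambda_{\max}(\Gamma_n)\gtrsim\{\min(n,d)\}^{1/2}$; a constant depending only on universal quantities requires the stronger Assumption \ref{assum:stable2}, under which $\lVert A_*^t\rVert_2\leq C\varrho^t$ gives $\lambda_{\max}(\Gamma_\infty)\leq C^2/(1-\varrho^2)$ at once. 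Your proposed alternatives do not circumvent this (the resolvent norm on a circle of radius between $\bar{\rho}$ and $1$ is likewise not controlled by $(\bar{\rho},C)$ alone), so to complete the lemma as the paper states it you must either cite Lemma \ref{lem_Gamma_n} outright, as the paper's proof does, or reproduce its Gelfand step and accept the $n$-free but $A_*$-dependent constant.
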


\begin{lemma}\label{lem_xi} For any $A_*\in\mathbb{R}^{d\times d}$, under Assumption \ref{assum:explosive},
	\[\log \xi \lesssim  \log \{d\cond(S)/\delta\}+ b_{\max} \log n.\] 
	Moreover, if Assumption \ref{assum:stable2} holds and $n \gtrsim m +\log(d/\delta)$, then $\xi \lesssim 1$.
\end{lemma}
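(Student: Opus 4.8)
The plan is to read $\xi$ off from its definition \eqref{xi} and control the only two problem-dependent quantities it contains, $\lambda_{\max}(\Gamma_n)$ and $\lVert\Sigma_X\rVert_2$, using Lemmas \ref{lem_Gamma_n} and \ref{lem_Sigma_X}. Since $\psi(m,d,\delta)=C_1\{m\log9+\log d+\log(2/\delta)\}\lesssim m+\log(d/\delta)$ and $m\le d^2$ (because $R\in\mathbb{R}^{d^2\times m}$ has full column rank), one has $\log\psi\lesssim\log(d/\delta)$. Writing the two summands of $\xi$ as $2\{\lambda_{\max}(\Gamma_n)\psi\lVert\Sigma_X\rVert_2/(\sigma^2n)\}^{1/2}$ and $2\psi\lVert\Sigma_X\rVert_2/(\sigma^2n)$, and noting that $\log\xi\le0$ trivially implies the asserted bound (the right-hand side being nonnegative), it suffices to prove, in the regime $\xi\ge1$, that $\log\lambda_{\max}(\Gamma_n)$ and $\log\{\lVert\Sigma_X\rVert_2/\sigma^2\}$ are each $\lesssim\log\{d\cond(S)\}+b_{\max}\log n$ under Assumption \ref{assum:explosive}, and each $\lesssim1$ under Assumption \ref{assum:stable2}; after that the conclusion follows by substitution.

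For Assumption \ref{assum:explosive} I would pass through the Jordan form $A_*=SJS^{-1}$. A Jordan block of size $b\le b_{\max}$ with eigenvalue of modulus $\le1+c/n$ satisfies $\lVert J^s\rVert_2\lesssim s^{b_{\max}-1}(1+c/n)^s\lesssim s^{b_{\max}-1}$ for $1\le s\le n$, hence $\lVert A_*^s\rVert_2\le\cond(S)\lVert J^s\rVert_2\lesssim\cond(S)\,s^{b_{\max}-1}$. Feeding this into Lemma \ref{lem_Gamma_n} gives $\lambda_{\max}(\Gamma_n)\le\sum_{s=0}^{n-1}\lVert A_*^s\rVert_2^2\lesssim\cond(S)^2 n^{2b_{\max}-1}$; feeding it, together with a block Gershgorin/Schur-test estimate $\lVert\Sigma_X\rVert_2\le\max_{1\le t\le n}\sum_{s=1}^n\lVert E(X_tX_s^\T)\rVert_2$ and $\lVert E(X_tX_s^\T)\rVert_2=\sigma^2\lVert A_*^{|t-s|}\Gamma_{\min(s,t)}\rVert_2\le\sigma^2\lVert A_*^{|t-s|}\rVert_2\lambda_{\max}(\Gamma_n)$, into Lemma \ref{lem_Sigma_X} gives $\lVert\Sigma_X\rVert_2/\sigma^2$ bounded by a polynomial in $\cond(S)$ and in $n$ whose degree is controlled by $b_{\max}$. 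Taking logarithms of these polynomial bounds, using $\cond(S)\ge1$ and $b_{\max}\ge1$, yields $\log\lambda_{\max}(\Gamma_n)\lesssim\log\cond(S)+b_{\max}\log n$ and the same for $\log\{\lVert\Sigma_X\rVert_2/\sigma^2\}$; combining with $\log\psi\lesssim\log(d/\delta)$ through the two summands of $\xi$ gives $\log\xi\lesssim\log\{d\cond(S)/\delta\}+b_{\max}\log n$.

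For Assumption \ref{assum:stable2}, the geometric bound $\lVert A_*^t\rVert_2\le C\varrho^t$ ($\varrho\in(0,1)$, $1\le t\le n$) gives $\lambda_{\max}(\Gamma_n)\le1+C^2\sum_{s\ge1}\varrho^{2s}\lesssim1$ via Lemma \ref{lem_Gamma_n}, while the hypothesis $\mu_{\min}(\mathcal{A})\ge\mu_1$ is precisely what Lemma \ref{lem_Sigma_X} uses to get $\lVert\Sigma_X\rVert_2\le\sigma^2/\mu_1\lesssim\sigma^2$, through the spectral-density representation of the associated stationary process and the identity $\lVert\mathcal{A}^{-1}(z)\rVert_2^2=\sigma_{\min}(\mathcal{A}(z))^{-2}\le\mu_{\min}(\mathcal{A})^{-1}$ on $|z|=1$ (the argument underlying Proposition 2.3 of \cite{Basu_Michailidis2015}). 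Substituting $\lambda_{\max}(\Gamma_n)\lesssim1$ and $\lVert\Sigma_X\rVert_2/\sigma^2\lesssim1$ into \eqref{xi} gives $\xi\lesssim\{\psi/n\}^{1/2}+\psi/n$, and since $\psi\lesssim m+\log(d/\delta)$ the hypothesis $n\gtrsim m+\log(d/\delta)$ forces $\psi/n\lesssim1$, hence $\xi\lesssim1$.

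The step I expect to be the crux is the bound on $\lVert\Sigma_X\rVert_2$ provided by Lemma \ref{lem_Sigma_X}. In contrast to $\lambda_{\max}(\Gamma_n)$, which only sees the growing diagonal blocks $\sigma^2\Gamma_1,\dots,\sigma^2\Gamma_n$, the matrix $\Sigma_X$ also has growing off-diagonal blocks $\sigma^2A_*^{t-s}\Gamma_s$, and a blunt triangle inequality over all $n^2$ blocks is too lossy: the explosive case requires exploiting the Jordan structure of $\{A_*^s\}$ carefully enough to keep the estimate polynomial in $n$ with exponent governed by $b_{\max}$, while the stable case must go through the spectral density and $\mu_{\min}(\mathcal{A})$ — which is exactly why Assumption \ref{assum:stable2}, not the weaker Assumption \ref{assum:stable1}, is needed here, as anticipated in Remark \ref{remark:xi}. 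Once Lemmas \ref{lem_Gamma_n} and \ref{lem_Sigma_X} are available, the remainder is the bookkeeping substitution sketched above.
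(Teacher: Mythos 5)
Your proof is correct and follows essentially the same route as the paper: bound $\lambda_{\max}(\Gamma_n)$ and $\lVert\Sigma_X\rVert_2$ via Lemmas \ref{lem_Gamma_n} and \ref{lem_Sigma_X}, bound $\psi(m,d,\delta)$ (the paper uses $m\leq dn$ to get $\log\psi\lesssim\log(d/\delta)+\log n$, you use $m\leq d^2$; either suffices), and substitute into \eqref{xi}, treating the stable case by noting $\psi/n\lesssim1$ when $n\gtrsim m+\log(d/\delta)$. The Jordan-form and block Schur-test re-derivations you sketch are already contained in (or replaceable by) the cited lemmas, so beyond that your argument is the same bookkeeping as the paper's.
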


\begin{lemma}\label{lem_Gamma_n}
	For any $A_*\in\mathbb{R}^{d\times d}$, under Assumption \ref{assum:explosive}, 
	\[
	\lambda_{\max}(\Gamma_n) \lesssim db_{\max} n^{2b_{\max}-1} \{\cond(S)\}^2.
	\]
	Moreover, if Assumption \ref{assum:stable1} holds, then $\lambda_{\max}(\Gamma_n) \lesssim 1$.
\end{lemma}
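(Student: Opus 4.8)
The plan is to reduce everything to a bound on $\lVert A_*^s\rVert_2$ via the triangle inequality and the Jordan decomposition. Since $\Gamma_n\succeq0$ is symmetric, $\lambda_{\max}(\Gamma_n)=\lVert\Gamma_n\rVert_2\leq\sum_{s=0}^{n-1}\lVert A_*^s(A_*^\T)^s\rVert_2=\sum_{s=0}^{n-1}\lVert A_*^s\rVert_2^2$. Writing $A_*=SJS^{-1}$, we get $A_*^s=SJ^sS^{-1}$, so $\lVert A_*^s\rVert_2\leq\lVert S\rVert_2\lVert S^{-1}\rVert_2\lVert J^s\rVert_2=\cond(S)\,\lVert J^s\rVert_2$, using that $\lVert S\rVert_2\lVert S^{-1}\rVert_2=\sigma_{\max}(S)/\sigma_{\min}(S)=\cond(S)$. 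Thus the whole problem becomes one of estimating $\lVert J^s\rVert_2$ for $0\leq s\leq n-1$.

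For the first claim (Assumption \ref{assum:explosive}), I would exploit that $J$ is block-diagonal, so $\lVert J^s\rVert_2=\max_{1\leq\ell\leq L}\lVert J_\ell^s\rVert_2$, and that each Jordan block may be written $J_\ell=\lambda I_{b_\ell}+N$ with $N$ the nilpotent superdiagonal shift ($\lVert N^j\rVert_2\leq1$). The binomial theorem gives $J_\ell^s=\sum_{j=0}^{b_\ell-1}\binom{s}{j}\lambda^{s-j}N^j$, whence $\lVert J_\ell^s\rVert_2\leq\sum_{j=0}^{b_\ell-1}\binom{s}{j}|\lambda|^{s-j}$. The key estimates are $|\lambda|^{s-j}\leq\rho(A_*)^{s}\leq(1+c/n)^{n}\leq e^{c}$ (valid for $0\leq s\leq n-1$, $j\geq0$, since $\rho(A_*)\leq1+c/n$) and $\binom{s}{j}\leq n^{j}\leq n^{b_{\max}-1}$ for $0\leq j\leq b_{\max}-1$; together these give $\lVert J^s\rVert_2\leq b_{\max}e^{c}n^{b_{\max}-1}$, hence $\lVert A_*^s\rVert_2^2\leq\{\cond(S)\}^2 b_{\max}^2 e^{2c}n^{2b_{\max}-2}$. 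Summing over the $n$ admissible values of $s$ yields $\lambda_{\max}(\Gamma_n)\leq e^{2c}\{\cond(S)\}^2 b_{\max}^2 n^{2b_{\max}-1}$, and since $b_{\max}\leq d$ this is $\lesssim d\,b_{\max}n^{2b_{\max}-1}\{\cond(S)\}^2$, with the hidden constant $e^{2c}$ depending only on the universal constant $c$.

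For the second claim (Assumption \ref{assum:stable1}), since $\rho(A_*)\leq\bar\rho<1$ the series $\Gamma_\infty=\sum_{s=0}^{\infty}A_*^s(A_*^\T)^s$ converges and $\Gamma_n\preceq\Gamma_\infty$, so it is enough to bound $\lambda_{\max}(\Gamma_\infty)\leq\sum_{s=0}^{\infty}\lVert A_*^s\rVert_2^2$ by a quantity free of $n$: for $s$ below a threshold one uses $\lVert A_*^s\rVert_2\leq C^{s}$, and for $s$ above it Gelfand's formula (equivalently the Jordan estimate $\lVert A_*^s\rVert_2\lesssim s^{b_{\max}-1}\bar\rho^{\,s-b_{\max}+1}$, which decays geometrically because $\bar\rho<1$) gives $\lVert A_*^s\rVert_2\leq M\bar\rho_1^{\,s}$ for some $\bar\rho_1\in(\bar\rho,1)$; in either case the sum is finite and independent of $n$, so $\lambda_{\max}(\Gamma_n)\lesssim1$. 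The main obstacle is in the explosive regime: one must balance the polynomial blow-up $\binom{s}{j}$ produced by a nontrivial Jordan structure against the near-unit eigenvalue powers, and it is precisely the combination $\rho(A_*)\leq1+c/n$ together with $s\leq n$ that keeps $|\lambda|^{s}$ bounded ($\leq e^{c}$) and confines all the growth to the polynomial factor $n^{2b_{\max}-1}$; the relaxation $b_{\max}^2\leq d\,b_{\max}$ is what matches the displayed form. In the stable regime the only delicate point is that $\rho(A_*)<1$ alone does not control the transient growth of $\lVert A_*^s\rVert_2$ at moderate $s$, so one must invoke convergence of $\Gamma_\infty$ (Gelfand's formula) rather than a term-by-term geometric bound.
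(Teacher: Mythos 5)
Your proof is correct and takes essentially the same route as the paper: Jordan decomposition of $A_*$, the bounds $\binom{s}{j}\le n^{b_{\max}-1}$ and $(1+c/n)^{2n}\le e^{2c}$ in the slightly explosive regime, and $\Gamma_n\preceq\Gamma_\infty$ with Gelfand's formula in the stable regime; the only difference is that you assemble the eigenvalue bound via $\lambda_{\max}(\Gamma_n)\le\sum_{s=0}^{n-1}\lVert A_*^s\rVert_2^2$ and $\lVert A_*^s\rVert_2\le \cond(S)\lVert J^s\rVert_2$, which yields the factor $b_{\max}^2$ where the paper's diagonal-entry argument for $\sum_s J^s(J^*)^s$ gives $d\,b_{\max}$, and you correctly relax this via $b_{\max}\le d$. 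One harmless slip: the intermediate inequality $|\lambda|^{s-j}\le\rho(A_*)^s$ can fail when $|\lambda|<1$, but since the binomial expansion only involves $0\le j\le s$ you still have $|\lambda|^{s-j}\le\max(1,|\lambda|^s)\le e^{c}$, so the conclusion is unaffected.
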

\begin{lemma}\label{lem_Sigma_X}
	For any $A_*\in\mathbb{R}^{d\times d}$, under Assumption \ref{assum:explosive}, 
	\[
	\lVert \Sigma_X \rVert_2 \lesssim  d n \sigma^2 \lambda_{\max}(\Gamma_n),
	\]
	where $\Sigma_X$ is the symmetric $dn\times dn$ matrix with its $(t,s)$th $d\times d$ block being $\sigma^2  A_*^{t-s} \Gamma_s$ for $1\leq s \leq t\leq n$.
	Moreover, if Assumption \ref{assum:stable2} holds, then $\lVert \Sigma_X \rVert_2\lesssim \sigma^2$.
\end{lemma}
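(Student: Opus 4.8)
The plan is to bound $\lVert \Sigma_X \rVert_2$ by relating it to a triangular (block‑Toeplitz‑like) structure and then invoking the crude operator‑norm estimate $\lVert M \rVert_2 \le n \max_{t,s}\lVert M_{ts}\rVert_2$ for a block matrix with $n\times n$ blocks. Recall that under Assumptions \ref{assum:var}(i)–(ii) the $(t,s)$ block of $\Sigma_X$ is $E(X_tX_s^\T)=\sigma^2 A_*^{t-s}\Gamma_s$ for $t\ge s$ and its transpose for $t<s$. Since $\lVert A_*^{t-s}\Gamma_s\rVert_2 \le \lVert A_*^{t-s}\rVert_2\,\lambda_{\max}(\Gamma_s) \le \lVert A_*^{t-s}\rVert_2\,\lambda_{\max}(\Gamma_n)$, the first step is to control $\lVert A_*^{t-s}\rVert_2$ for $0\le t-s\le n$. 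Writing $A_*^{t-s}(A_*^\T)^{t-s}\preceq \Gamma_n$ (this holds because $\Gamma_n = \sum_{j=0}^{n-1} A_*^j(A_*^\T)^j$ and $t-s\le n-1$), we get $\lVert A_*^{t-s}\rVert_2^2 = \lambda_{\max}\big(A_*^{t-s}(A_*^\T)^{t-s}\big) \le \lambda_{\max}(\Gamma_n)$, hence $\lVert A_*^{t-s}\rVert_2 \le \lambda_{\max}(\Gamma_n)^{1/2}$. Therefore every block of $\Sigma_X$ has operator norm at most $\sigma^2 \lambda_{\max}(\Gamma_n)^{1/2}\,\lambda_{\max}(\Gamma_n) = \sigma^2 \lambda_{\max}(\Gamma_n)^{3/2}$, which is not yet what we want; so I would instead use the sharper triangular bound below rather than the trivial block bound.

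A cleaner route is to split $\Sigma_X = \sigma^2(U + U^\T)$ where $U$ collects the blocks with $t>s$ plus half the diagonal, and bound $\lVert U\rVert_2$ by a Schur‑test / Gershgorin argument on blocks: $\lVert U\rVert_2 \le \big(\max_t \sum_s \lVert U_{ts}\rVert_2\big)^{1/2}\big(\max_s \sum_t \lVert U_{ts}\rVert_2\big)^{1/2}$. Using $\lVert U_{ts}\rVert_2 = \lVert A_*^{t-s}\Gamma_s\rVert_2 \le \lambda_{\max}(\Gamma_n)^{1/2}\lambda_{\max}(\Gamma_n) $ for all admissible $t,s$ gives row/column sums $\le n\,\lambda_{\max}(\Gamma_n)^{3/2}$, hence $\lVert \Sigma_X\rVert_2 \lesssim \sigma^2 n\,\lambda_{\max}(\Gamma_n)^{3/2}$ — still with an extra $\lambda_{\max}(\Gamma_n)^{1/2}$. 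To shave this factor under Assumption \ref{assum:explosive} I would invoke Lemma \ref{lem_Gamma_n}, noting that $\lambda_{\max}(\Gamma_n)\lesssim d\,b_{\max}\,n^{2b_{\max}-1}\{\cond(S)\}^2$, so $\lambda_{\max}(\Gamma_n)^{1/2}\lesssim (d\,b_{\max})^{1/2} n^{b_{\max}-1/2}\cond(S)$; absorbing this polynomial factor into the claimed bound $d n\sigma^2\lambda_{\max}(\Gamma_n)$ only works if one is willing to let the "$\lesssim$" swallow a further $\mathrm{poly}(d,n,\cond(S))$ — which the statement, via its universal‑constant convention, does \emph{not} permit. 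The resolution must therefore be the more careful factorization used in \cite{Basu_Michailidis2015}: write $\Sigma_X = \mathcal{T}\mathcal{T}^\T$ where $\mathcal{T}$ is the block‑lower‑triangular matrix with $(t,s)$ block $\sigma A_*^{t-s}$ for $t\ge s$ (this is the moving‑average representation $X_t=\sum_{\ell\ge 0}A_*^\ell\eta_{t-1-\ell}$), so that $\lVert\Sigma_X\rVert_2 = \lVert\mathcal{T}\rVert_2^2$, and then bound $\lVert\mathcal{T}\rVert_2^2 \le \lVert\mathcal{T}\rVert_1\lVert\mathcal{T}\rVert_\infty \le \sigma^2\big(\max_t\sum_{\ell=0}^{t-1}\lVert A_*^\ell\rVert_2\big)^2$, and use $\sum_{\ell=0}^{n-1}\lVert A_*^\ell\rVert_2 \le \sqrt{n}\,\big(\sum_{\ell=0}^{n-1}\lVert A_*^\ell\rVert_2^2\big)^{1/2} \le \sqrt{n}\,\big(\sum_{\ell=0}^{n-1}\lambda_{\max}(A_*^\ell(A_*^\T)^\ell)\big)^{1/2}\le \sqrt{n}\,\big(\mathrm{tr}\,\Gamma_n\big)^{1/2}\le \sqrt{dn}\,\lambda_{\max}(\Gamma_n)^{1/2}$, which yields exactly $\lVert\Sigma_X\rVert_2 \lesssim \sigma^2\,d n\,\lambda_{\max}(\Gamma_n)$.

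For the second assertion, under Assumption \ref{assum:stable2}$^\prime$ I would not use Lemma \ref{lem_Gamma_n} at all; instead I would use $\lVert A_*^\ell\rVert_2 \le C\varrho^\ell$ directly in the triangular bound: $\lVert\mathcal{T}\rVert_1,\lVert\mathcal{T}\rVert_\infty \le \sigma\sum_{\ell\ge 0}C\varrho^\ell = \sigma C/(1-\varrho)$, a constant, whence $\lVert\Sigma_X\rVert_2 = \lVert\mathcal{T}\rVert_2^2 \le \{\sigma C/(1-\varrho)\}^2 \lesssim \sigma^2$. (The role of $\mu_{\min}(\mathcal{A})$ being bounded below, as flagged in Remark \ref{remark:xi}, enters only if one wishes a bound in terms of $\rho(A_*)$ rather than the geometric‑decay constant $\varrho$; since Assumption \ref{assum:stable2}$^\prime$ already hands us $\lVert A_*^t\rVert_2\le C\varrho^t$, the estimate is immediate.) The main obstacle is the first part: one must resist the temptation of the crude block‑norm bound, which loses a factor $\lambda_{\max}(\Gamma_n)^{1/2}$ that is \emph{not} harmless in the slightly explosive regime, and instead pass through the Cholesky‑type factorization $\Sigma_X=\mathcal{T}\mathcal{T}^\T$ together with the $\lVert\cdot\rVert_1$–$\lVert\cdot\rVert_\infty$ interpolation and Cauchy–Schwarz, so that $\mathrm{tr}\,\Gamma_n\le d\,\lambda_{\max}(\Gamma_n)$ is the only place dimension $d$ is spent.
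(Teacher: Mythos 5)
Your proposal is correct, but it reaches both bounds by a different route than the paper. For the first bound, the paper's proof is a one-liner that you bypassed: since $\Sigma_X$ is a covariance matrix, $\lVert \Sigma_X \rVert_2=\lambda_{\max}(\Sigma_X)\leq \tr(\Sigma_X)=\sigma^2\sum_{t=1}^n\tr(\Gamma_t)\leq n\sigma^2\tr(\Gamma_n)\leq dn\sigma^2\lambda_{\max}(\Gamma_n)$, so only the diagonal blocks matter and no factorization is needed; your route through $\Sigma_X=\mathcal{T}\mathcal{T}^\T$, the block Schur test, Cauchy--Schwarz and $\tr(\Gamma_n)\leq d\lambda_{\max}(\Gamma_n)$ is valid but heavier than necessary (and, as in the paper, Assumption \ref{assum:explosive} is not really used here). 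For the second bound the divergence is more substantive: the paper couples $\{X_t\}$ with the stationary process $\widetilde{X}_{t+1}=A_*\widetilde{X}_t+\eta_t$ started at $t=-\infty$, invokes the Basu--Michailidis spectral-density bound $\lVert\widetilde{\Sigma}_X\rVert_2\leq \sigma^2/\mu_{\min}(\mathcal{A})\leq\sigma^2/\mu_1$ (this is exactly where the $\mu_{\min}(\mathcal{A})\geq\mu_1$ clause of Assumption \ref{assum:stable2} is consumed), and then controls $\lVert\widetilde{\Sigma}_X-\Sigma_X\rVert_2$ blockwise using $\lVert A_*^t\rVert_2\leq C\varrho^t$, since the difference blocks are $\sigma^2A_*^t\Gamma_\infty(A_*^\T)^s$. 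You instead bound $\lVert\Sigma_X\rVert_2=\lVert\mathcal{T}\rVert_2^2$ directly by the block Schur test with row/column sums $\leq\sigma C/(1-\varrho)$, which is more elementary, avoids the coupling entirely, and --- as you correctly observe --- does not use $\mu_{\min}(\mathcal{A})\geq\mu_1$ at all, so your argument shows that for this particular lemma the geometric-decay clause of Assumption \ref{assum:stable2} suffices; what the paper's route buys is a bound expressed through $\mu_{\min}(\mathcal{A})$, consistent with how that quantity is motivated in Remark \ref{remark:xi} and in Basu and Michailidis (2015). Both arguments deliver $\lVert\Sigma_X\rVert_2\lesssim\sigma^2$ with constants depending only on $(C,\varrho)$ (and $\mu_1$ in the paper's case), so the proposal stands.
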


\subsection{Proofs of Lemmas \ref{lem_kappa}--\ref{lem_Sigma_X}\label{subsec:lem_B_proofs}} 

\begin{proof}[Proof of Lemma~\ref{lem_kappa}]
	Note that
	\begin{align*}
	\kappa&=\log \left [\det\left \{R^\T (I_d\otimes \Gamma_{n}) R \right \}\det\{(R^\T R)^{-1}\} \right ]\\
	& \leq \log \left [ \lambda_{\max}^m(\Gamma_n)\det(R^\T R)\det\{(R^\T R)^{-1}\} \right ]= m\log  \lambda_{\max}(\Gamma_n).
	\end{align*}
	Thus, the upper bound of $\kappa$ follows directly from Lemma \ref{lem_Gamma_n}.
\end{proof}

\begin{proof}[Proof of Lemma~\ref{lem_xi}]
	First consider the case under Assumption \ref{assum:explosive}. Note that $m\leq dn$; see the paragraph below \eqref{eq:ols_theta} in the main paper. Then
	\[
	\log \psi(m,d,\delta)\lesssim  \log m+\log \log (d/\delta) \lesssim  \log (d/\delta)+\log n.
	\]
	This, together with Lemmas \ref{lem_Sigma_X} and \ref{lem_kappa}, leads to the upper bound of $\log \xi$ under Assumption \ref{assum:explosive}.
	
	Suppose that Assumption \ref{assum:stable2} holds. Then it follows from Lemmas \ref{lem_Sigma_X} and \ref{lem_kappa} that
	\[
	\xi \lesssim \{\psi(m,d,\delta)/n\}^{1/2}+\psi(m,d,\delta)/n.
	\]
	Moreover, if $n \gtrsim m +\log(d/\delta)$, then $\psi(m,d,\delta)/n \lesssim 1$, and consequently $\xi \lesssim 1$.  The proof of this lemma is complete.
\end{proof}

\begin{proof}[Proof of Lemma~\ref{lem_Gamma_n}]
	We first prove the conclusion under Assumption \ref{assum:explosive}. 
	By the Jordan normal form of $A_*$, we have 
	\[
	\Gamma_{n}=S\sum_{s=0}^{n-1}J^s S^{-1} (S^{-1})^* (J^*)^s S^*\preceq \{\lambda_{\min}(S^*S)\}^{-1} S \sum_{s=0}^{n-1}J^s (J^*)^s S^*.
	\]
	Hence
	\begin{equation}\label{eq:Gamma_n_eq1}
	\lambda_{\max}(\Gamma_n) \leq \{\cond(S)\}^2 \lambda_{\max}\left \{\sum_{s=0}^{n-1}J^s (J^*)^s \right \}.
	\end{equation}
	
	For $\ell=1,\ldots, L$,  denote by $J_\ell$ the $\ell$th block of $J$ with size $b_\ell$ and diagonal entries $\lambda_\ell$. Note that the $\ell$th block of the block diagonal matrix $\sum_{s=0}^{n-1}J^s (J^*)^s \in\mathbb{R}^{d\times d}$ is $B_\ell = \sum_{s=0}^{n-1}J_\ell^s (J_\ell^*)^s \in\mathbb{R}^{b_\ell\times b_\ell}$. 
	Moreover, the $(i,j)$th entry of $J_\ell^s$ is 
	\[
	(J_\ell^s)_{ij}=\begin{cases}
	\binom{s}{j-i} \lambda_\ell^{s-(j-i)}, &\text{if}\quad 1\leq i\leq j\leq \min(i+s, b_\ell)\\
	0, &\text{otherwise}
	\end{cases},
	\]
	where $\binom{0}{0}=1$. Then the $i$th diagonal entry of $B_\ell$ is
	\begin{equation}\label{eq_Bl}
	(B_\ell)_{ii}= \sum_{s=0}^{n-1} \sum_{j=1}^{b_\ell} (J_\ell^s)_{ij}^2 = \sum_{s=0}^{n-1} \sum_{j=i}^{\min(i+s, b_\ell)} \left \{\binom{s}{j-i} |\lambda_\ell|^{s-(j-i)}\right \}^2.
	\end{equation}
	
	By Assumption \ref{assum:explosive}, $|\lambda_\ell| \leq \rho(A_*) \leq 1+c/n$ for $c>0$. Thus
	\[
	|\lambda_\ell|^{2\{s-(j-i)\}} \leq \left (1+\frac{c}{n}\right )^{2n}.
	\]
	Note that $(1+c/n)^{2n}$ monotonically increases to $\exp(2c)$ as $n\rightarrow\infty$, which implies that  $|\lambda_\ell|^{2\{s-(j-i)\}}$ is uniformly bounded by a universal constant $C_2>0$. Moreover, for $j-i\leq b_\ell-1$ and $s<n$,
	$\binom{s}{j-i}$ in \eqref{eq_Bl} is uniformly bounded above by $n^{b_\ell-1}$. As a result, for any $1\leq i\leq b_\ell$ and $1\leq \ell \leq L$, we have
	\[
	(B_\ell)_{ii} \leq C_2 b_\ell n^{2b_\ell-1}.
	\]
	Notice that the diagonal entries of $\sum_{s=0}^{n-1}J^s (J^*)^s$ are $\{(B_\ell)_{ii}\}_{1\leq i\leq b_\ell, 1\leq \ell \leq L}$. Therefore
	\begin{equation}\label{eq:Gamma_n_eq2}
	\lambda_{\max}\left \{\sum_{s=0}^{n-1}J^s (J^*)^s \right \} \leq d \max_{1\leq i\leq b_\ell, 1\leq \ell \leq L}(B_\ell)_{ii} \leq C_2 d b_{\max} n^{2b_{\max}-1}.
	\end{equation}
	Combining \eqref{eq:Gamma_n_eq1} and \eqref{eq:Gamma_n_eq2}, we obtain the upper bound of $\lambda_{\max}(\Gamma_n)$ under Assumption \ref{assum:explosive} as stated in this lemma.
	
	Next we verify the conclusion under Assumption \ref{assum:stable1}. Since $\rho(A_*)\leq \bar{\rho} <1$, we have $\Gamma_n \preceq \Gamma_{\infty}=\sum_{s=0}^{\infty}A_*^s (A_*^\T)^s<\infty$. Note that $\rho(A_*)=\lim_{s\rightarrow\infty} \lVert A_*^s \rVert_2^{1/s}$. Thus, for any $\epsilon>0$, there exists a positive integer $n_0=n_0(\epsilon)$ such that $\lVert A_*^s \rVert_2^{1/s}<\rho(A_*)+\epsilon$ for all $s\geq n_0$. Taking $\epsilon=\{1-\rho(A_*)\}/2$, we have $\rho(A_*)+\epsilon=(1+\bar{\rho})/2<1$. As a result,
	\begin{align*}
	\lambda_{\max}(\Gamma_n) \leq \lambda_{\max}(\Gamma_{\infty}) \leq \sum_{s=0}^{\infty}\lVert A_*^s \rVert_2^2 &\leq \sum_{s=0}^{n_0-1}\lVert A_* \rVert_2^{2s}+\sum_{s=n_0}^{\infty} \left (\frac{1+\bar{\rho}}{2}\right)^{2s} \\
	&\leq \sum_{s=0}^{n_0-1} C^{2s}+\left \{1-\left (\frac{1+\bar{\rho}}{2}\right )^2\right \}^{-1},
	\end{align*}
	where the last upper bound is a fixed constant.  The proof of this lemma is complete.
\end{proof}

\begin{proof}[Proof of Lemma~\ref{lem_Sigma_X}]
	The result under Assumption \ref{assum:explosive} is straightforward, since
	\[
	\lVert \Sigma_X \rVert_2 = \lambda_{\max}(\Sigma_X)	\leq \tr(\Sigma_X)=\sigma^2\sum_{t=1}^{n}\tr(\Gamma_t)\leq n\sigma^2 \tr(\Gamma_n)\leq d n \sigma^2 \lambda_{\max}(\Gamma_n).
	\]
	
	However, showing that  $\lVert \Sigma_X \rVert_2$ is bounded by a fixed constant proportional to $\sigma^2$ under Assumption \ref{assum:stable2} requires  a much more delicate argument. This is largely because $\lVert \Sigma_X \rVert_2$ is affected by not only the growing diagonal blocks $\sigma^2\Gamma_1, \dots, \sigma^2\Gamma_n$ but also the growing off-diagonal blocks; note that for any $1\leq t,s\leq n$, the $(t,s)$th block of $\Sigma_X$ is
	\begin{equation}\label{eq:lem_Sigma_X_eq1}
	E(X_tX_s^\T)=\begin{cases}
	\sigma^2 \Gamma_t (A_*^\T)^{s-t}, & \text{ if } t< s\\
	\sigma^2 A_*^{t-s} \Gamma_s, & \text{ if } t\geq s
	\end{cases}.
	\end{equation}
	
	To overcome this difficulty, under Assumption \ref{assum:stable2}, we consider the following `coupled' stable $\VAR(1)$ process $\{\widetilde{X}_t\}$ with  independent and identically distributed  innovations $\{\eta_t\}$ such that $E(\eta_t)=0$ and $\var(\eta_t)=\sigma^2 I_d$, but assuming that $\widetilde{X}_t$ starts from $t=-\infty$:
	\begin{equation}\label{eq:coupled}
	\widetilde{X}_{t+1}=A_* \widetilde{X}_{t}+\eta_t, \quad t\in\mathbb{Z}.
	\end{equation}
	Unlike $\{X_t\}_{t\geq 0}$ in the main paper, this process is weakly stationary. Indeed, for any $t\in\mathbb{Z}$, it holds $E(\widetilde{X}_t)=0$ and 
	\begin{equation}\label{eq:lem_Sigma_X_eq2}
	E(\widetilde{X}_t \widetilde{X}_{t+k}^\T)=\begin{cases}
	\sigma^2 \Gamma_{\infty} (A_*^\T)^{k}, & \text{ if } k>0\\
	\sigma^2 A_*^{k} \Gamma_{\infty}, & \text{ if } k\leq 0
	\end{cases},
	\end{equation}
	where $\Gamma_{\infty}=\lim_{n\rightarrow\infty}\Gamma_{n}=\sum_{s=0}^{\infty} A_*^s (A_*^\T)^s<\infty$. Analogously to $\Sigma_X$, let $\widetilde{\Sigma}_X$ be the symmetric $dn\times dn$ matrix with its  $(t,s)$th $d\times d$ block being $E(\widetilde{X}_t \widetilde{X}_{s}^\T)$ for $1\leq t,s\leq n$. In other words, $\widetilde{\Sigma}_X$ is the covariance matrix of the  $d n\times 1$ vector $\vect(\widetilde{X}^\T)=(\widetilde{X}_1^\T, \dots, \widetilde{X}_n^\T)^\T$. Note that in contrast to $\Sigma_X$, the blocks of $\widetilde{\Sigma}_X$ do not grow in the diagonal direction, in the sense that all $E(\widetilde{X}_t \widetilde{X}_{s}^\T)$'s share the same factor matrix $\Gamma_{\infty}$. By \cite{Basu_Michailidis2015}, for the weakly stationary $\VAR(1)$ process $\{\widetilde{X}_t\}$ in \eqref{eq:coupled}, it holds
	\begin{equation}\label{eq:lem_Sigma_X_eq3}
	\lVert \widetilde{\Sigma}_X \rVert_2 \leq \frac{\sigma^2}{\mu_{\min}(\mathcal{A})}\leq \frac{\sigma^2}{\mu_1},
	\end{equation}
	where $\mu_{\min}(\mathcal{A})\geq \mu_1>0$ is defined as in Assumption \ref{assum:stable2}.
	
	In view of \eqref{eq:lem_Sigma_X_eq3} and the triangle inequality
	\begin{equation}\label{eq:lem_Sigma_X_eq4}
	\lVert \Sigma_X \rVert_2 \leq \lVert \widetilde{\Sigma}_X \rVert_2 + \lVert \widetilde{\Sigma}_X - \Sigma_X \rVert_2,
	\end{equation}
	it remains to prove that $\lVert \widetilde{\Sigma}_X - \Sigma_X \rVert_2 \lesssim \sigma^2$. To this end, for any $1\leq t,s\leq n$, consider the difference between the $(t,s)$th blocks of $ \widetilde{\Sigma}_X$ and $\Sigma_X$:
	\begin{align}\label{eq:lem_Sigma_X_eq5}
	E(\widetilde{X}_t\widetilde{X}_s^\T)-E(X_tX_s^\T)
	&=\begin{cases}
	\sigma^2 (\Gamma_{\infty}-\Gamma_t) (A_*^\T)^{s-t}, & \text{ if } t < s\\
	\sigma^2 A_*^{t-s}(\Gamma_{\infty}-\Gamma_s), & \text{ if } t\geq s
	\end{cases} \notag\\
	&=\sigma^2 A_*^t\Gamma_\infty (A_*^\T)^s.
	\end{align}
	Note that under Assumption \ref{assum:stable2}, $\lVert \Gamma_{\infty} \rVert_2\leq  \sum_{t=0}^{\infty} \lVert A_*^t\rVert_{2}^2\leq C^2 \sum_{t=0}^{\infty} \varrho^{2t} = \frac{C^2}{1-\varrho^2}$, where $\varrho\in(0,1)$. This, together with \eqref{eq:lem_Sigma_X_eq5}, implies that for any $1\leq t,s\leq n$,
	\begin{equation*}
	\lVert E(\widetilde{X}_t\widetilde{X}_s^\T)-E(X_tX_s^\T) \rVert_{2} \leq \sigma^2 \lVert A_*^t \rVert_{2} \lVert \Gamma_{\infty} \rVert_2\lVert A_*^s\rVert_{2} \leq \frac{C^2 \sigma^2 \varrho^{t+s}}{1-\varrho^2}.
	\end{equation*}
	Consequently, for any $u=(u_1^\T, \dots, u_n^\T)^\T \in\mathcal{S}^{dn-1}$ with $u_t\in\mathbb{R}^d$, we have
	\begin{align*}
	u^\T (\widetilde{\Sigma}_X - \Sigma_X) u &= \sum_{t=1}^{n}\sum_{s=1}^{n} u_t^\T \{E(\widetilde{X}_t\widetilde{X}_s^\T)-E(X_tX_s^\T)\} u_s\\
	&\leq \sum_{t=1}^{n}\sum_{s=1}^{n} \frac{u_t^\T \{E(\widetilde{X}_t\widetilde{X}_s^\T)-E(X_tX_s^\T)\}  u_s}{\lVert u_t \rVert \lVert u_s \rVert}\\
	&\leq \sum_{t=1}^{n}\sum_{s=1}^{n} \lVert E(\widetilde{X}_t\widetilde{X}_s^\T)-E(X_tX_s^\T) \rVert_{2} \\
	&\leq \frac{C^2 \sigma^2}{1-\varrho^2} \sum_{t=1}^{n}\sum_{s=1}^{n} \varrho^{t+s} \leq \frac{C^2\sigma^2 \varrho^2}{(1-\varrho^2)(1-\varrho)^2}.
	\end{align*}
	Thus,
	\begin{equation}\label{eq:lem_Sigma_X_eq6}
	\lVert \widetilde{\Sigma}_X - \Sigma_X \rVert_2 \leq \frac{C^2\sigma^2 \varrho^2}{(1-\varrho^2)(1-\varrho)^2}.
	\end{equation}
	Combining \eqref{eq:lem_Sigma_X_eq3}, \eqref{eq:lem_Sigma_X_eq4} and \eqref{eq:lem_Sigma_X_eq6}, the proof of this lemma is complete.
\end{proof}

\section{Proof of Theorem \ref{thm3}\label{sec:S4}}
We will prove claim (i) of Theorem \ref{thm3} only, as claims (ii) and (iii) can be proved by a method similar to that for (i). 

First, by an argument similar to that in \citet[p.~199]{Lutkepohl2005}, we can show that 
\[
R \left \{R^\T (I_d\otimes \Gamma_k) R\right \}^{-1} R^\T \preceq I_d\otimes \Gamma_{k}^{-1}. 
\]
In addition, note that 
\[
\lambda_{\min}(\Gamma_{k}) \geq \sum_{s=0}^{k-1} \lambda_{\min}\{A_*^s (A_*^\T)^s \}
= \sum_{s=0}^{k-1} \sigma_{\min}^s(A_* A_*^\T) 
\geq \sum_{s=0}^{k-1}\sigma_{\min}^{2s}(A_*).
\]
As a result,
\begin{equation}\label{eq:thm3_aeq1}
\lambda_{\max}\left [R \left \{R^\T (I_d\otimes \Gamma_k) R\right \}^{-1} R^\T\right ] \leq \lambda_{\max}(\Gamma_{k}^{-1})=\frac{1}{\lambda_{\min}(\Gamma_{k})}\leq  \frac{1}{\sum_{s=0}^{k-1}\sigma_{\min}^{2s}(A_*)}.
\end{equation}

Now we prove the rate in \eqref{eq:slow1} under condition \eqref{eq:sigmin_slow1}. By the existence condition of $k$ in \eqref{eq:k_sufficient}, we can choose
\begin{equation}\label{eq:max_k}
k=\frac{c_0 n}{m\left [\log\{d\cond(S)/\delta\}+ b_{\max} \log n \right ]},
\end{equation}
where $c_0>0$ is a universal constant. Then, \eqref{eq:sigmin_slow1} can be written as
\begin{equation}\label{eq:thm3_aeq2}
\sigma_{\min}(A_*) \leq 1-c_2 /k,
\end{equation}
where $c_2=c_1c_0>0$.
Since
\[
\frac{1}{\sum_{s=0}^{k-1}\sigma_{\min}^{2s}(A_*)}=\frac{1-\sigma_{\min}^{2}(A_*)}{1-\sigma_{\min}^{2k}(A_*)},
\]
by Theorem \ref{thm2}(i) and \eqref{eq:thm3_aeq1}, to prove the rate for $\lVert \widehat{\beta} - \beta_* \rVert$ in \eqref{eq:slow1}, it suffices to show that there exists a universal constant $c_3\in(0,1)$ such that
\begin{equation}\label{eq:thm3_aeq3}
1-\sigma_{\min}^{2k}(A_*)\geq c_3.
\end{equation}
Moreover, by \eqref{eq:thm3_aeq2}, we can show that \eqref{eq:thm3_aeq3} is satisfied if 
\begin{equation}\label{eq:thm3_aeq4}
-2k \log(1-c_2 / k ) \geq -\log(1-c_3).
\end{equation}
Note that the function $f(k)=-2k \log(1-c_2 / k )$ monotonically deceases to $2c_2$ as $k\rightarrow\infty$. Thus, by choosing $c_3$ such that $-\log(1-c_3)=2c_2$, i.e., $c_3=1-\exp(-2c_2) \in(0,1)$, we accomplish the proof of \eqref{eq:slow1}.

Next we prove the rate in \eqref{eq:fast1} when the opposite of \eqref{eq:sigmin_slow1} is true, i.e., when
\begin{equation}\label{eq:sigmin_fast1}
\sigma_{\min}(A_*)\geq 1- \frac{c_1 m\left [\log\{d\cond(S)/\delta\}+ b_{\max} \log n \right ]}{n}.
\end{equation}	
Again, we choose $k$ in \eqref{eq:max_k}, and then \eqref{eq:sigmin_fast1} becomes
\begin{equation*}
\sigma_{\min}(A_*) \geq 1-c_2 /k,
\end{equation*}
where $c_2$ is defined as in \eqref{eq:thm3_aeq2}.
Thus,
\begin{equation}\label{eq:thm3_aeq5}
\sum_{s=0}^{k-1}\sigma_{\min}^{2s}(A_*) \geq \sum_{s=0}^{k-1} (1-c_2 /k)^{2s} \geq k (1-c_2 /k)^{2k}.
\end{equation}

In view of Theorem \ref{thm2}(i), \eqref{eq:thm3_aeq1}, \eqref{eq:max_k} and \eqref{eq:thm3_aeq5}, to prove the rate for $\lVert \widehat{\beta} - \beta_* \rVert$ in \eqref{eq:fast1}, we only need to show that there exists a universal constant $c_4\in(0,1)$ such that
\begin{equation}
(1-c_2 /k)^{2k} \geq c_4.
\end{equation}
By the choice of $k$ in \eqref{eq:max_k},  we have $k > c_2$. Hence, there exists $\epsilon>0$ such that $k\geq c_2+\epsilon$. Moreover, notice that the function $g(k)=(1-c_2 /k)^{2k}$ is monotonically increasing in $k$. As a result, by choosing $c_4=g(c_2+\epsilon)$, we complete the proof of \eqref{eq:fast1}.

\section{Proofs of Theorem \ref{thm_lower} and Corollary \ref{cor_lower} \label{sec:S5}}
\subsection{Two Auxiliary Lemmas}
The proof of Theorem \ref{thm_lower} is based upon Lemmas \ref{lem_A4} and \ref{lem_A5} below.
Denote by $\kl(\mathbb{Q},\mathbb{P})$ the Kullback-Leibler divergence between two probability measures $\mathbb{P}$ and $\mathbb{Q}$  on the same measurable space.

\begin{lemma}\label{lem_A4} Fix $\delta\in(0,1/2)$, $\epsilon>0$ and $R\in\mathbb{R}^{N\times m}$. Suppose that $\mathcal{N}$ is a finite subset of $\mathbb{R}^{m}$ such that $\lVert R(\theta_1-\theta_2) \rVert\geq 2\epsilon, \forall \theta_1\neq \theta_2\in\mathcal{N}$. 
	If 
	\begin{equation}\label{eq:lem_F1_condition}
	\inf_{\widehat{\theta}}\sup_{\theta\in \mathcal{N}} \P_{\theta}^{(n)} \left \{ \lVert R( \widehat{\theta}-\theta) \rVert \geq \epsilon \right \} \leq \delta,
	\end{equation}
	where the infimum is taken over all estimators of $\theta$ which are $\mathcal{F}_{n+1}$-measurable,
	then
	\begin{equation*}
	\inf_{\theta_0\in\mathcal{N}} \sup_{\theta\in\mathcal{N}\setminus \{\theta_0\}} \kl(\P_{\theta}^{(n)}, \P_{\theta_0}^{(n)}) \geq (1-2\delta)\log\frac{\vert \mathcal{N}\vert-1}{2\delta}.
	\end{equation*}
\end{lemma}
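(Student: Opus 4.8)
The plan is to recognize Lemma~\ref{lem_A4} as a version of Fano's inequality adapted to the linearly restricted setting, where the ``distance'' between parameters is the seminorm $\theta\mapsto\lVert R\theta\rVert$ rather than the Euclidean norm on $\mathbb{R}^m$. First I would set up a testing (hypothesis selection) argument: given an estimator $\widehat\theta$ satisfying the risk bound \eqref{eq:lem_F1_condition}, define a test $\widehat{\psi}$ taking values in $\mathcal{N}$ by letting $\widehat{\psi}$ be the point of $\mathcal{N}$ that minimizes $\lVert R(\widehat\theta-\theta)\rVert$ over $\theta\in\mathcal{N}$ (breaking ties arbitrarily). The separation hypothesis $\lVert R(\theta_1-\theta_2)\rVert\geq 2\epsilon$ for distinct $\theta_1,\theta_2\in\mathcal{N}$ is exactly what makes this decoding work: if the true parameter is $\theta$ and $\lVert R(\widehat\theta-\theta)\rVert<\epsilon$, then by the triangle inequality for the seminorm $\lVert R(\cdot)\rVert$, for every $\theta'\in\mathcal{N}\setminus\{\theta\}$ we have $\lVert R(\widehat\theta-\theta')\rVert\geq \lVert R(\theta-\theta')\rVert-\lVert R(\widehat\theta-\theta)\rVert>2\epsilon-\epsilon=\epsilon$, so $\widehat\psi=\theta$. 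Hence $\P_\theta^{(n)}(\widehat\psi\neq\theta)\leq\P_\theta^{(n)}\{\lVert R(\widehat\theta-\theta)\rVert\geq\epsilon\}\leq\delta$ for every $\theta\in\mathcal{N}$, so the maximum testing error over $\mathcal{N}$ is at most $\delta$.

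Next I would invoke the information-theoretic lower bound on the testing error (Fano's inequality in its ``averaged'' form, e.g.\ as in Tsybakov's book): for any test $\widehat\psi$ valued in a finite set $\mathcal{N}$ with $\lvert\mathcal{N}\rvert\geq 2$, and any $\theta_0\in\mathcal{N}$,
\[
\max_{\theta\in\mathcal{N}}\P_\theta^{(n)}(\widehat\psi\neq\theta)\;\geq\; 1-\frac{\frac{1}{\lvert\mathcal{N}\rvert-1}\sum_{\theta\in\mathcal{N}\setminus\{\theta_0\}}\kl(\P_\theta^{(n)},\P_{\theta_0}^{(n)})+\log 2}{\log(\lvert\mathcal{N}\rvert-1)}.
\]
Combining this with the bound $\delta$ on the left-hand side and choosing $\theta_0$ to be the minimizer over $\mathcal{N}$ of $\sup_{\theta\in\mathcal{N}\setminus\{\theta_0\}}\kl(\P_\theta^{(n)},\P_{\theta_0}^{(n)})$ — which dominates the averaged KL divergence — I would rearrange to obtain
\[
\inf_{\theta_0\in\mathcal{N}}\sup_{\theta\in\mathcal{N}\setminus\{\theta_0\}}\kl(\P_\theta^{(n)},\P_{\theta_0}^{(n)})\;\geq\;(1-\delta)\log(\lvert\mathcal{N}\rvert-1)-\log 2.
\]
To reach the cleaner form stated in the lemma, $(1-2\delta)\log\frac{\lvert\mathcal{N}\rvert-1}{2\delta}$, I would instead use the sharper variant of Fano due to Birgé, or equivalently carry the argument through with the exact constants: the key algebraic identity is that $(1-2\delta)\log\frac{\lvert\mathcal{N}\rvert-1}{2\delta}$ is precisely the threshold below which the averaged KL divergence forces the maximal error probability to exceed $\delta$. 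This is a standard rearrangement of the data-processing/Fano chain; I would present it via the variational characterization of KL or via the explicit two-point reduction bound, being careful that the infimum over estimators in \eqref{eq:lem_F1_condition} only restricts to $\mathcal{F}_{n+1}$-measurable maps, which is harmless since $\widehat\psi$ is a measurable function of $\widehat\theta$.

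The main obstacle, and the step requiring the most care, is getting the exact constant $(1-2\delta)$ and the exact $2\delta$ in the denominator rather than a cruder $(1-\delta)$ with a stray $\log 2$; this hinges on using the optimal form of Fano's lemma (Birgé's inequality) instead of the textbook version, and on correctly accounting for the asymmetry in the reduction from estimation to testing. Everything else — the triangle inequality for the seminorm $\lVert R(\cdot)\rVert$, the measurability of the induced test, and the reduction of the averaged KL to the worst-case KL over $\theta_0$ — is routine. Note that this lemma is purely abstract: it makes no use of the specific VAR structure, and the Gaussian/autoregressive model enters only later when \eqref{eq:lem_F1_condition}'s hypotheses are verified and the KL divergences $\kl(\P_\theta^{(n)},\P_{\theta_0}^{(n)})$ are computed explicitly in the proof of Theorem~\ref{thm_lower}.
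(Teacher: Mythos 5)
Your proposal is correct and follows essentially the same route as the paper: reduce estimation to a multiple-hypothesis selection problem using the $2\epsilon$-separation under the seminorm $\lVert R(\cdot)\rVert$ (the paper phrases this as pairwise disjoint events $\{\lVert R(\widehat{\theta}-\theta)\rVert<\epsilon\}$ rather than a minimum-distance test, which is equivalent), and then apply Birg\'{e}'s inequality rather than the cruder averaged Fano bound to obtain the stated constants $(1-2\delta)\log\{(\vert\mathcal{N}\vert-1)/(2\delta)\}$. Your remark that the textbook Fano version only yields $(1-\delta)\log(\vert\mathcal{N}\vert-1)-\log 2$ and that Birg\'{e}'s sharper form is needed is exactly the point the paper handles by citing Birg\'{e}'s inequality together with the argument of Simchowitz et al.
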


\begin{proof}[Proof of Lemma~\ref{lem_A4}]
	For any $\mathcal{F}_{n+1}$-measurable estimator $\widehat{\theta}$, let $\mathcal{E}_{\theta}=\{ \lVert R(\widehat{\theta}-\theta) \rVert < \epsilon\}$ for $\theta\in\mathcal{N}$. Since $\mathcal{N}$ is a $2\epsilon$-packing of $\mathbb{R}^{m}$, the events $\mathcal{E}_{\theta}$'s with $\theta\in\mathcal{N}$ are pairwise disjoint in $\mathcal{F}_{n+1}$. By \eqref{eq:lem_F1_condition}, there exists a $\widehat{\theta}$ such that $\sup_{\theta\in \mathcal{N}}\P_{\theta}^{(n)}(\mathcal{E}_{\theta}^c)\leq \delta <1/2$, i.e., $\inf_{\theta\in \mathcal{N}}\P_{\theta}^{(n)}(\mathcal{E}_{\theta})\geq 1-\delta >1/2$. Applying Birg\'{e}'s inequality \citep[Theorem 4.21]{Boucheron_Lugosi_Massart2013} and an argument similar to that for Lemma F.1 in \cite{Simchowitz2018}, we can readily prove that for any $\theta_0\in\mathcal{N}$,
	\[
	\sup_{\theta\in \mathcal{N}\setminus\{\theta_0\}}\kl(\P_{\theta}^{(n)},\P_{\theta_0}^{(n)})\geq (1-2\delta) \log\frac{(1-\delta)(\vert\mathcal{N}\vert-1)}{\delta}\geq (1-2\delta)\log\frac{\vert \mathcal{N}\vert-1}{2\delta}.
	\]
	Taking the infimum over $\theta_0\in\mathcal{N}$, we accomplish the proof of this lemma.
\end{proof}

\begin{lemma}\label{lem_A5}
	For the linearly restricted vector autoregressive model, under the conditions of Theorem \ref{thm_lower}, for any $\theta, \theta_0\in\mathbb{R}^{m}$, we have
	\[
	\kl(\P_{\theta}^{(n)}, \P_{\theta_0}^{(n)})=\frac{1}{2} (\theta-\theta_0)^\T \Gamma_{R,n}(\theta) (\theta-\theta_0),
	\]  
	where
	$\Gamma_{R,n}(\theta)=\sum_{i=1}^{d} R_i^\T \sum_{t=1}^{n}  \Gamma_{t}(\theta) R_i =R^\T \{I_d\otimes\sum_{t=1}^{n}  \Gamma_{t}(\theta)\} R$.
\end{lemma}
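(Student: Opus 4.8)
The plan is to compute the Kullback--Leibler divergence $\kl(\P_{\theta}^{(n)}, \P_{\theta_0}^{(n)})$ directly by exploiting the Gaussianity of the innovations and the sequential (chain-rule) structure of the likelihood. First I would write down the joint density of $(X_1,\dots,X_{n+1})$ under $\P_\theta^{(n)}$. Since $X_0=0$ and $\{\eta_t\}$ are i.i.d.\ $N(0,\sigma^2 I_d)$, the conditional law of $X_{t+1}$ given $\mathcal{F}_t$ is $N(A(\theta)X_t,\sigma^2 I_d)$, so by the chain rule for KL divergence,
\[
\kl(\P_{\theta}^{(n)}, \P_{\theta_0}^{(n)})=\sum_{t=1}^{n} E_{\theta}\left[\kl\big(N(A(\theta)X_t,\sigma^2 I_d),\,N(A(\theta_0)X_t,\sigma^2 I_d)\big)\right].
\]
Using the closed form for the KL divergence between two Gaussians with common covariance $\sigma^2 I_d$, the $t$-th summand equals $\tfrac{1}{2\sigma^2}E_\theta\|\{A(\theta)-A(\theta_0)\}X_t\|^2$.

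Next I would convert $\{A(\theta)-A(\theta_0)\}X_t$ into a quadratic form in $\theta-\theta_0$. Since $A(\theta)=(I_d\otimes\theta^\T)\widetilde{R}+G$ is affine in $\theta$, we have $A(\theta)-A(\theta_0)=(I_d\otimes(\theta-\theta_0)^\T)\widetilde{R}$, and writing out the block structure $\widetilde{R}=(R_1,\dots,R_d)^\T$ gives $\{A(\theta)-A(\theta_0)\}X_t=\big((\theta-\theta_0)^\T R_1 X_t,\dots,(\theta-\theta_0)^\T R_d X_t\big)^\T$, so that
\[
\|\{A(\theta)-A(\theta_0)\}X_t\|^2=\sum_{i=1}^{d}(\theta-\theta_0)^\T R_i^\T X_t X_t^\T R_i(\theta-\theta_0).
\]
Taking $E_\theta$ and using that under $\P_\theta^{(n)}$ with $X_0=0$ we have $E_\theta(X_tX_t^\T)=\sigma^2\Gamma_t(\theta)$ (this is exactly the finite-time controllability Gramian identity from Assumption~\ref{assum:var}(i)--(ii), now with transition matrix $A(\theta)$), the $\sigma^2$ cancels and summing over $t$ yields $\kl(\P_{\theta}^{(n)}, \P_{\theta_0}^{(n)})=\tfrac12(\theta-\theta_0)^\T\big\{\sum_{i=1}^d R_i^\T\sum_{t=1}^n\Gamma_t(\theta)R_i\big\}(\theta-\theta_0)$, which is the claimed formula since $\sum_{i=1}^d R_i^\T(\cdot)R_i=R^\T(I_d\otimes(\cdot))R$.

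The only subtlety—and the step I would be most careful about—is justifying the chain-rule decomposition of the KL divergence and the interchange of expectation and summation: one must verify that the Radon--Nikodym derivative $d\P_\theta^{(n)}/d\P_{\theta_0}^{(n)}$ factorizes as a product of the one-step conditional density ratios (both measures are mutually absolutely continuous because the conditional Gaussians have identical nonsingular covariance $\sigma^2 I_d$), and that $E_\theta\log(d\P_\theta^{(n)}/d\P_{\theta_0}^{(n)})$ splits accordingly via the tower property. All the expectations involved are finite because $\Gamma_t(\theta)$ is a finite matrix for every $t$. Everything else is a routine Gaussian computation plus the Gramian identity already established earlier in the paper, so the argument is short once the chain rule is in place.
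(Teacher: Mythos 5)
Your proposal is correct and follows essentially the same route as the paper's proof: both reduce the KL divergence to $\tfrac{1}{2\sigma^2}\sum_{t}E_\theta\lVert\{A(\theta)-A(\theta_0)\}X_t\rVert^2$ via the Gaussian conditional likelihoods (the paper by expanding the log-density ratio with $Z_{i,t}=R_i^\T X_t$, you by the KL chain rule plus the equal-covariance Gaussian formula), and then invoke $E_\theta(X_tX_t^\T)=\sigma^2\Gamma_t(\theta)$ so that $\sigma^2$ cancels. The only blemish is a notational slip in your intermediate display (the coordinates should read $(\theta-\theta_0)^\T R_i^\T X_t$, not $(\theta-\theta_0)^\T R_i X_t$), which does not affect the argument; as a minor plus, working with $A(\theta)=(I_d\otimes\theta^\T)\widetilde{R}+G$ directly lets you dispense with the paper's ``without loss of generality $\gamma=0$'' reduction.
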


\begin{proof}[Proof of Lemma~\ref{lem_A5}]	
	Without loss of generality, we assume that $\gamma=0$, so that $\beta=R\theta$. Let $X_{i,t}$ be the $i$th entry of $X_t$, and denote $Z_{i,t}=R_i^\T X_t$. For any $\theta\in\mathbb{R}^{m}$, under $\P_{\theta}^{(n)}$ we have $X_{i,t+1}\mid \mathcal{F}_t \sim N(\theta^\T Z_{i,t}, \sigma^2)$, where $0\leq t \leq n$ and $\mathcal{F}_0=\emptyset$. Hence, the log-likelihood of $(X_1,\ldots, X_{n+1})$ under $\P_{\theta}^{(n)}$ is
	\begin{align*}
	&\log \prod_{t=0}^{n} \prod_{i=1}^{d}\frac{1}{(2\pi)^{1/2} \sigma} \exp\left \{-\frac{(X_{i,t+1}-\theta^\T Z_{i,t})^2}{2\sigma^2}\right \}\\ &\hspace{5mm}= -(n+1)d\log((2\pi)^{1/2} \sigma)-\frac{1}{2\sigma^2}\sum_{t=0}^{n}\sum_{i=1}^{d}(X_{i,t+1}-\theta^\T Z_{i,t})^2.
	\end{align*}
	As a result,
	\begin{align*}
	\kl(\P_{\theta}^{(n)}, \P_{\theta_0}^{(n)})=E_{\P_{\theta}^{(n)}} \left ( \log \frac{d \P_{\theta}^{(n)}}{d \P_{\theta_0}^{(n)}} \right )
	&=\frac{1}{2}\sum_{t=0}^{n} \sum_{i=1}^{d} E_{\P_{\theta}^{(n)}}\left[\{ \eta_{i,t}+(\theta-\theta_0)^\T Z_{i,t}\}^2- \eta_{i,t}^2\right]\\
	&=\frac{1}{2} (\theta-\theta_0)^\T \sum_{t=1}^{n} \sum_{i=1}^{d} E_{\P_{\theta}^{(n)}}(Z_{i,t}Z_{i,t}^\T) (\theta-\theta_0)\\
	&=\frac{1}{2} (\theta-\theta_0)^\T \Gamma_{R,n}(\theta) (\theta-\theta_0),
	\end{align*}
	where the last equality is because of $E_{\P_{\theta}^{(n)}}(Z_{i,t}Z_{i,t}^\T)=R_i^\T \Gamma_{t}(\theta) R_i$. The proof is complete.
\end{proof}

\subsection{Proof of Theorem \ref{thm_lower}}
Without loss of generality, we assume that $\gamma=0$, so that $\beta=R\theta$.
Define the ellipsoid $E=\{\theta\in \mathbb{R}^m: \lVert R\theta\rVert\leq \bar{\rho} \} =\{ (R^\T R)^{-1/2}\omega: \omega\in B(0,\bar{\rho})\}$,
where $B(0,r)$ denotes the Euclidean ball in $\mathbb{R}^m$ with center zero and radius $r$. Since  $\rho\{A(\theta)\} \leq \lVert A(\theta) \rVert_F=\left (\sum_{i=1}^d\lVert R_i\theta \rVert^2\right )^{1/2}=\lVert R\theta \rVert$, we have $E\subseteq \Theta(\bar{\rho})$.

For any $\epsilon\in(0,\bar{\rho}/4]$, let $\mathcal{N}_1$ be a maximal $2\epsilon$-packing of $B(0,4\epsilon)$ in $\mathbb{R}^m$, and define 	$\mathcal{N}=\{(R^\T R)^{-1/2}\omega: \omega\in\mathcal{N}_1\}$.
Then, $\mathcal{N}$ is a $2\epsilon$-packing of $E$ in the norm $\lVert (R^\T R)^{1/2}(\cdot)\rVert$. As a result, $2\epsilon \leq \lVert R(\theta-\theta_0) \rVert\leq 8\epsilon$ for all $\theta\neq \theta_0\in\mathcal{N}$. In addition, by a standard volumetric argument, we have $\vert\mathcal{N} \vert=\vert \mathcal{N}_1\vert\geq 2^m$.	
By Lemma \ref{lem_A4}, for any $\delta\in(0,1/2)$, this theorem holds if
\begin{equation}\label{kl_condition1}
\inf_{\theta_0\in\mathcal{N}} \sup_{\theta\in\mathcal{N}\setminus \{\theta_0\}} \kl(\P_{\theta}^{(n)}, \P_{\theta_0}^{(n)}) < (1-2\delta)\log\frac{\vert \mathcal{N}\vert-1}{2\delta}.
\end{equation}

Since $\sum_{t=1}^{n} \Gamma_{t}(\theta) \preceq n \Gamma_{n}(\theta)$ for any $\theta\in\mathbb{R}^m$, and
\[
\sup_{\theta\in\Theta(\bar{\rho})}\lambda_{\max}\{\Gamma_{n}(\theta)\} \leq  \sum_{s=0}^{n-1} \lambda_{\max}[ A^s(\theta) \{A^\T(\theta)\}^s] \leq  \sum_{s=0}^{n-1}\bar{\rho}^{2s}
=\gamma_n(\bar{\rho}),
\]
it follows from Lemma \ref{lem_A5} that
\begin{align*}
\max_{\theta,\theta_0\in\mathcal{N}}\kl(\P_{\theta}^{(n)}, \P_{\theta_0}^{(n)}) 
&\leq \frac{1}{2}  \max_{\theta,\theta_0\in\mathcal{N}}(\theta-\theta_0)^\T \Gamma_{R,n}(\theta) (\theta-\theta_0)\\
&\leq \frac{n}{2} \max_{\theta,\theta_0\in\mathcal{N}}(\theta-\theta_0)^\T R^\T \{I_d\otimes \Gamma_{n}(\theta)\} R (\theta-\theta_0)\\
&\leq \frac{n}{2} \max_{\theta,\theta_0\in\mathcal{N}} \lVert R (\theta-\theta_0) \rVert^2 \sup_{\theta\in\Theta(\bar{\rho})}\lambda_{\max}\{\Gamma_{n}(\theta)\}\\
&\leq 32\epsilon^2 n \gamma_n(\bar{\rho})
\end{align*}
As a result, a sufficient condition for \eqref{kl_condition1} is 
\begin{equation}\label{kl_condition2}
n \gamma_n(\bar{\rho}) < \frac{(1-2\delta)}{32\epsilon^2}\log\frac{2^m}{4\delta}.
\end{equation}
In particular, for any $\delta\in(0,1/4)$, we can show that there exists a universal constant $c>0$ such that the right-hand side of \eqref{kl_condition2} is bounded below by $c\{m+\log(1/\delta)\}/\epsilon^2$, i.e., the conclusion of this theorem follows.

\subsection{Proof of Corollary \ref{cor_lower}}
Under the conditions of Theorem \ref{thm_lower}, we have
\[
\inf_{\widehat{\beta}}\sup_{\theta\in \Theta(\bar{\rho})} \P_{\theta}^{(n)} \left[ \lVert \widehat{\beta}-\beta\rVert \geq C \left \{\frac{m+\log(1/\delta)}{n \gamma_n(\bar{\rho})}\right \}^{1/2} \right] \geq \delta,
\]
where $C>0$ is fixed. It then suffices to derive lower bounds of $1/\gamma_n(\bar{\rho})$ for $\bar{\rho}\in(0,\infty)$.

First, suppose that $\bar{\rho}\in(0,1)$. Then we have $\gamma_n(\bar{\rho})=\sum_{s=0}^{n-1}\bar{\rho}^{2s}=(1-\bar{\rho}^{2n})/(1-\bar{\rho}^2)<\min\{n, (1-\bar{\rho}^2)^{-1}\}$, and therefore
\begin{equation}\label{eq:cor_lower_aeq1}
\frac{1}{\gamma_n(\bar{\rho})}>
\begin{cases}
1-\bar{\rho}^2, & \text{if}\quad \bar{\rho}\in (0, (1-1/n)^{1/2})\\
1/n, & \text{if}\quad \bar{\rho}\in [(1-1/n)^{1/2}, 1)
\end{cases}.
\end{equation}

Next, suppose that $\bar{\rho}\in[1, 1+c/n]$ for a fixed $c>0$. Then
\[
\frac{\gamma_n(\bar{\rho})}{n}=\frac{1}{n}\sum_{s=0}^{n-1}\bar{\rho}^{2s} \leq \frac{1}{n}\sum_{s=0}^{n-1} (1+c/n)^{2s}\leq (1+c/n)^{2n}.
\]
Since $(1+c/n)^{2n}$ monotonically increases to $\exp(2c)$ as $n\rightarrow\infty$, there exists a constant $C_2>0$ free of $n$ such that $\gamma_n(\bar{\rho})/n$ is uniformly bounded above by $C_2$, i.e., 
\begin{equation}\label{eq:cor_lower_aeq2}
\frac{1}{\gamma_n(\bar{\rho})} \geq \frac{1}{C_2}n^{-1} \quad\text{if}\quad \bar{\rho}\in[1, 1+c/n].
\end{equation}

Moreover, for any $\bar{\rho}\in(1,\infty)$, we have
\begin{equation}\label{eq:cor_lower_aeq3}
\frac{1}{\gamma_n(\bar{\rho})} =\frac{\bar{\rho}^2-1}{\bar{\rho}^{2n}-1}> \frac{\bar{\rho}^2-1}{\bar{\rho}^{2n}}.
\end{equation}

Combining \eqref{eq:cor_lower_aeq1}--\eqref{eq:cor_lower_aeq3}, we have
\[
\left \{\frac{m}{n \gamma_n(\bar{\rho})}\right \}^{1/2} \geq 
\begin{cases}
\left \{{(1-\bar{\rho}^2)m}/{n}\right \}^{1/2}, & \text{if}\quad \bar{\rho}\in (0, (1-1/n)^{1/2})\\
m^{1/2}/n, & \text{if}\quad \bar{\rho}\in [(1-1/n)^{1/2}, 1+c/n]\\
\bar{\rho}^{-n}\left \{{(\bar{\rho}^2-1)m}/{n}\right \}^{1/2}, & \text{if}\quad \bar{\rho}\in(1+c/n, \infty)
\end{cases},
\]
and this completes the proof of Corollary \ref{cor_lower}.


\clearpage

\end{document}